\numberwithin{equation}{section}
\def\ker{\operatorname{Ker}}
\def\im{\operatorname{Im}}
\def\dim{\operatorname{dim}}
\def\id{\operatorname{id}}
\def\min{\operatorname{min}}
\def\id{\operatorname{id}}
\def\R{\mathbb{R}}
\def\N{\mathbb{N}}
\def\Z{\mathbb{Z}}
\def\Q{\mathbb{Q}}
\def\AA{\mathcal{A}}
\def\QQ{\mathcal{Q}}
\newcommand{\ep}{\varepsilon}
\newcommand{\fib}[4]{#1 \tensor[_{#2}]{\times}{_{#3}} #4}
\newcommand{\Corr}[7]{
\begin{scope}[xshift=#1cm,yshift=#2cm]
\node at (0,0) {$#3$};
\draw[->] (1,1) -- node[pos=0.4,left] {$#4$\,} (0.25,0.3);
\node at (1.5,1.3) {$#5$};
\draw[->] (2,1) -- node[pos=0.4,right] {\,$#6$} (2.75,0.3);
\node at (3,0) {$#7$};
\end{scope}}
\newcommand{\Correq}[8]{
\begin{scope}[xshift=#1cm,yshift=#2cm]
\node at (0,0) {$#3$};
\node at (3,0) {$#4$};
\draw[->] (0,1.5) -- node[pos=0.4,left] {$#5$} (0,0.3);
\draw[->] (0.3,1.5) -- node[pos=0.4,left] {$#6$\,} (2.75,0.3);
\draw[->] (2.75,1.5) -- node[pos=0.4,right] {\,\,\,$#7$} (0.25,0.3);
\draw[->] (3,1.5) -- node[pos=0.4,right] {$#8$} (3,0.3);
%\node at (0,1.3) {$#9$};
%\node at (3,1.3) {$#10$};
%\draw[->] (2,1) -- node[pos=0.4,right] {$#6$} (2.75,0.3);
%\node at (3,0) {$#7$};
\end{scope}}
\tikzstyle{vertex}=[circle]
\tikzstyle{goto}=[->,shorten >=1pt,>=stealth,semithick]
\newtheorem{thm}{Theorem}[section]
\newtheorem{cor}[thm]{Corollary}
\newtheorem{lemma}[thm]{Lemma}
\newtheorem{prop}[thm]{Proposition}
\theoremstyle{definition}
\newtheorem{definition}[thm]{Definition}
\newtheorem{deflem}[thm]{Definition/Lemma}
\theoremstyle{remark}
\newtheorem{remark}[thm]{Remark}
\newtheorem{example}[thm]{Example}
\newtheorem*{Acknowledgements}{Acknowledgements}
\begin{document}

\begin{abstract}
We initiate the study of correspondences for Smale spaces. Correspondences are shown to provide a notion of a generalized morphism between Smale spaces and are a special case of finite equivalences. Furthermore, for shifts of finite type, a correspondence is related to a matrix which intertwines the adjacency matrices of the shifts. This observation allows us to define an equivalence relation on all Smale spaces which reduces to shift equivalence for shifts of finite type. Several other notions of equivalence are introduced on both correspondences and Smale spaces; a hierarchy between these equivalences is established. Finally, we provide several methods for constructing correspondences and provide specific examples.
\end{abstract}

%\date{\today}
\title[Dynamical Correspondences for Smale spaces]{Dynamical Correspondences for Smale spaces}

\author{Robin J. Deeley}
\address{Robin J. Deeley,  Laboratorie de Mathematiques, Universite Blaise Pascal, Clermont-Ferrand II, France}
\email{robin.deeley@gmail.com}
\author[D. Brady Killough]{D. Brady Killough}
\address{Brady Killough, Department of Mathematics and Physics, Mount Royal University, Calgary, Alberta, Canada T3E 6K6}
\email{bkillough@mtroyal.ca}
\author[Michael F. Whittaker]{Michael F. Whittaker}
\address{Michael F. Whittaker, School of Mathematics and Statistics,
University of Glasgow, Glasgow, Scotland Q12 8QW} 
\email{Mike.Whittaker@glasgow.ac.uk}

\thanks{This research was supported by the ANR Project SingStar and the Australian Research Council (DP130100490).}

\subjclass[2010]{Primary {37D20}; Secondary {37B10}}

\maketitle

\section{Introduction}

Smale spaces were defined by Ruelle as a purely topological description of the basic sets of Smale's Axiom A diffeomorphisms on a compact manifold \cite{Rue1,Sma}. Smale spaces are topological dynamical systems with a local hyperbolic product structure. Shifts of finite type are the zero dimensional examples of Smale spaces and, because of Bowen's theorem \cite{Bow}, are the basic building blocks of the theory.

The main goal of this paper is the introduction of a class of generalized morphisms between Smale spaces. Given two Smale spaces $(X,\varphi)$ and $(Y,\psi)$, there is a correspondence from   $(X,\varphi)$ to $(Y,\psi)$ if there is a third Smale space $(M,\mu)$ and a diagram 
\begin{equation}\label{corr_intro}
\begin{tikzpicture}
\Corr{0}{0}{(X,\varphi)}{\pi_u}{(M,\mu)}{\pi_s}{(Y,\psi)}
\end{tikzpicture}
\end{equation}
where $\pi_u$ is a u-bijective map and $\pi_s$ is an s-bijective map; these types of maps will be discuss below, and in more detail in Section \ref{sec:mapsonSmale}. In this paper we show that correspondences allow us to generalize notions that are specific to the combinatorial nature of shifts of finite type to the language of Smale spaces; prototypical examples include shift equivalence and strong shift equivalence.

The recent development of Putnam's homology theory for Smale spaces \cite{put}, which generalizes Krieger's dimension groups for shifts of finite type, plays an important role in this paper. In particular, the functorial properties of Putnam's homology theory emphasize the importance of  s- and u-bijective maps, which are related to, but are less general than Fried's notion of s- and u-resolving maps \cite{Fri}. An s-bijective map $\pi_s:(X,\varphi) \to (Y,\psi)$ is a factor map that restricts to an isomorphism between specific stable sets of $X$ and $Y$; likewise a u-bijective map restricts to an isomorphism between specific unstable sets of $X$ and $Y$ \cite[Definition 2.5.5]{put}.  The functoriality of Putnam's homology theory with respect to s- and u-bijective maps implies that a correspondence between Smale spaces $(X,\varphi)$ and $(Y,\psi)$ induces natural maps between the homology groups $H_*(X,\varphi)$ and $H_*(Y,\psi)$. Putnam's homology theory and its functorial properties are reviewed in Section \ref{putHomRev}.

One of the prevailing themes of this paper is the generalization of notions for shifts of finite type to all Smale spaces. A finite equivalence between shifts of finite type is a diagram like \eqref{corr_intro} except that the maps $\pi_u$ and $\pi_s$ are only assumed to be finite-to-one factor maps; two Smale spaces are called finitely equivalent if there exists a finite equivalence between them. There is a rather strong analogy between finite equivalences and certain non-negative integer matrices going back to work of Furstenberg and Parry \cite[Theorem 1 and Lemma 2]{Par}. Given an irreducible graph $G$, we let $A_G$ denote the adjacency matrix of $G$ and we let $(\Sigma_G, \sigma)$ denote the associated shift of finite type. Our starting point for generalization in this direction is a slight reformulation of \cite[Theorem 8.3.8]{LM} (also see Theorem \ref{838LMinCor} below):

\begin{thm} \label{introThm}
Let $G$ and $H$ be irreducible graphs. Then the following are equivalent
\begin{enumerate}
\item $(\Sigma_G, \sigma)$ and $(\Sigma_H, \sigma)$ are finitely equivalent;
\item $(\Sigma_G, \sigma)$ and $(\Sigma_H, \sigma)$ have equal entropy;
\item There exists a nonnegative integer matrix $F\ne 0$ such that $F A_G  = A_H F$; and
\item There is a correspondence from $(\Sigma_G, \sigma)$ to $(\Sigma_H, \sigma)$.
\end{enumerate}
\end{thm}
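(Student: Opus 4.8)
The plan is to treat the equivalences (1)$\Leftrightarrow$(2)$\Leftrightarrow$(3) as essentially the classical finite-equivalence theorem and to concentrate the work on the new condition (4). For irreducible graphs the equivalence of finite equivalence (1) with equal entropy (2) is exactly \cite[Theorem~8.3.8]{LM}, and the equivalence with the existence of a nonzero nonnegative integer intertwiner (3) is the matrix form going back to Furstenberg and Parry \cite[Theorem~1, Lemma~2]{Par}. I would also record the short half of (3)$\Rightarrow$(2): if $FA_G = A_H F$ with $F \geq 0$, $F \neq 0$, and $v > 0$ is the Perron eigenvector of $A_G$ (so $A_G v = \lambda_G v$), then $A_H(Fv) = \lambda_G (Fv)$ with $Fv \geq 0$ and $Fv \neq 0$; Perron--Frobenius for the irreducible $A_H$ forces $\lambda_G = \lambda_H$, i.e. equal entropy. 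The substantive direction, the integer existence in (2)$\Rightarrow$(3), I would simply cite from Parry. With (1)$\Leftrightarrow$(2)$\Leftrightarrow$(3) in hand, it suffices to prove (3)$\Rightarrow$(4) and (4)$\Rightarrow$(1), closing the cycle.

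For (4)$\Rightarrow$(1) I would invoke the general fact, established in the earlier part of the paper and announced in the abstract, that a correspondence is a special case of a finite equivalence. Concretely, a u-bijective map and an s-bijective map are in particular finite-to-one factor maps, so a correspondence from $(\Sigma_G,\sigma)$ to $(\Sigma_H,\sigma)$ is already a finite equivalence between them, giving (1) at once.

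The heart of the argument is (3)$\Rightarrow$(4): building a correspondence out of a matrix $F \geq 0$, $F \neq 0$, with $FA_G = A_H F$. Indexing rows of $F$ by vertices of $H$ and columns by vertices of $G$, the identity reads $\sum_i F_{pi}(A_G)_{ij} = \sum_q (A_H)_{pq}F_{qj}$ for every $H$-vertex $p$ and $G$-vertex $j$. I read the left side as the number of paths $p \xrightarrow{f} i \xrightarrow{e} j$ consisting of one of the $F_{pi}$ formal $F$-symbols followed by a $G$-edge, and the right side as the number of paths $p \xrightarrow{a} q \xrightarrow{g} j$ consisting of an $H$-edge followed by an $F$-symbol. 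Fixing, for each pair $(p,j)$, a bijection between these two finite sets encodes a family of commuting squares. I then define a graph $\Gamma$ whose vertices are the $F$-symbols and whose edges are these squares, the square $(f,e) \leftrightarrow (a,g)$ running from vertex $f$ to vertex $g$. Projecting each square to its $G$-edge $e$ and to its $H$-edge $a$ yields two one-block factor codes $\pi_G \colon (\Sigma_\Gamma,\sigma) \to (\Sigma_G,\sigma)$ and $\pi_H \colon (\Sigma_\Gamma,\sigma) \to (\Sigma_H,\sigma)$; a direct check of source and target data shows both are genuine factor maps of edge shifts. Since each square is determined by its left pair $(f,e)$, the map $\pi_G$ is right-covering, and dually, since a square is determined by its right pair $(a,g)$, the map $\pi_H$ is left-covering. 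I would then verify that right-covering yields a bijection on each unstable set and left-covering a bijection on each stable set, propagating an asymptotic agreement forward using out-edge determinacy and backward using in-edge determinacy, so that $\pi_G$ is u-bijective and $\pi_H$ is s-bijective. This is precisely a correspondence from $(\Sigma_G,\sigma)$ to $(\Sigma_H,\sigma)$, with $\pi_u = \pi_G$ and $\pi_s = \pi_H$.

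Two points need care, and I expect the bookkeeping around them to be the main obstacle. First, surjectivity: the covering maps need not be onto unless every vertex of $G$ and $H$ is used, so I would first replace $F$ by the still-intertwining, strictly positive matrix $\bigl(\sum_{k<n} A_H^k\bigr)\,F\,\bigl(\sum_{l<n} A_G^l\bigr)$ for $n$ large (using $A_H^k F = F A_G^k$); then every vertex lies in the image, and a periodic-point-lifting argument together with density of periodic points in the irreducible shifts gives surjectivity of $\pi_G$ and $\pi_H$. Second, and more delicate, is matching orientation conventions: one must confirm, against the definitions of stable and unstable sets and of s- and u-bijective maps used in the paper, that it is indeed the $G$-projection that is u-bijective and the $H$-projection that is s-bijective, transposing $F$ or swapping the roles of the two legs if the convention is the opposite. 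The verification that covering graph morphisms induce bijections on the stable and unstable sets of the associated shifts of finite type is exactly where I would be most careful to align with the conventions of Section~\ref{sec:mapsonSmale}.
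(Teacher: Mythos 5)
Your proposal is correct and takes essentially the same route as the paper's own proof (given as Theorem \ref{838LMinCor}): cite \cite[Theorem 8.3.8]{LM} (going back to \cite{Par}) for the equivalence of (1)--(3), obtain (3)$\Rightarrow$(4) from the Lind--Marcus graph of $F$-edges whose two projections are right- and left-covering and hence u- and s-bijective (the paper cites \cite[Theorem 2.5.17]{put} for this step, which you propose to verify by hand), and obtain (4)$\Rightarrow$(1) from the fact that s- and u-bijective maps are finite-to-one \cite[Theorem 2.5.3]{put}. Your orientation bookkeeping ($F$-edges running from $H$-vertices to $G$-vertices, the $G$-leg u-bijective and the $H$-leg s-bijective) agrees exactly with the convention the paper records in the remark following Theorem \ref{838LMinCor}, so no transposition is needed.
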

In general, Theorem 2.5.3 in \cite{put} shows that s- and u-bijective maps on Smale spaces are finite-to-one, so a correspondence is a special case of a finite equivalence between Smale spaces. Theorem \ref{introThm} implies that the {\it existence} of a finite equivalence, a correspondence, or an intertwining matrix between irreducible shifts of finite type are equivalent. However, a finite equivalence between shifts of finite type does not canonically induce a map at the dimension group level. In contrast, an intertwining matrix or a correspondence between shifts of finite type does define a map at the dimension group level. This is the first indication that correspondences give a more natural generalization of an intertwining matrix than finite equivalences.

Returning to the case of Smale spaces, Putnam's homology theory is not (or at least is not known to be) functorial for arbitrary finite-to-one factor maps, but as noted above is functorial for s- and u-bijective maps and hence also for correspondences. Since definitions involving correspondences can be applied to all Smale spaces, it is natural to translate notions and results phrased in terms of matrices for shifts of finite type into the language of correspondences -- examples include shift equivalence and strong shift equivalence.

We say that two Smale spaces $(X,\varphi)$ and $(Y,\psi)$ are $H$-equivalent if there is a correspondence from $(X,\varphi)$ to $(Y,\psi)$ and a correspondence from  $(Y,\psi)$ to $(X,\varphi)$ so that the composition induces the identity map at the level of Putnam's homology theory in all possible orders (see Definition \ref{equivSmaleSpaceLevel}). In Theorem \ref{HequiImpliesShiftSFT} we show that $H$-equivalence of Smale spaces generalizes shift equivalence. In the same spirit, strong equivalence of Smale spaces (see Definition \ref{equivSmaleSpaceLevel}) is equivalent to strong shift equivalence as shown in Corollary \ref{SFTtostongshiftequiv}.

%Despite this, there are issues with the analogy between intertwining matrices and correspondences. The statement of Theorem \ref{introThm} provides a hint in regards to these issues; it is an ``existence statement". In particular, based on the statement of the theorem, there is no uniqueness of the correspondence obtained from a given intertwining matrix. Furthermore, given a correspondence, Theorem \ref{introThm} implies that there exists an intertwining matrix, but it does not imply the map induced from the correspondence is given by that matrix. In particular, it does not provide any uniqueness to the matrix ``produced" from a particular correspondence. These issues are discussed in Section \ref{corForSFT} and Example \ref{LMconstruction}.

A further motivation for the definition of correspondences comes from index theory. Namely, from the notion of correspondences in KK-theory due to Connes and Skandalis \cite{CS} (also see \cite{EM}). However, an understanding of KK-theory is not required to understand the results of this paper. Classes in KK-theory can be viewed as generalized morphisms; correspondences of Smale spaces should also be viewed in this way.

All in all, the goal of this paper is to introduce correspondences for Smale spaces, discuss their basic properties, and explore the strengths and weaknesses of the analogy between intertwining nonnegative integer matrices and correspondences in the case of shifts of finite type. Furthermore, we compare existence results concerning correspondences and finite equivalences. In the case of the latter, Bowen's theorem implies that results in the shift of finite type case generalize with little or no change to the Smale space case. The situation for the existence of a correspondence is more subtle as it involves the stable and unstable sets, compare Theorems \ref{finiteEqualEntropy} and \ref{generalized LM 8.3.8}.

The content of the paper is as follows. Section \ref{prelims} briefly introduces the reader to Smale spaces and their homology groups. In Section \ref{Sec:Finite_equiv} we review finite equivalence and almost conjugacy for Smale spaces. Section \ref{sec:correspondences} introduces the main new concept in the paper, correspondences for Smale spaces. These provide a notion of a generalized morphism between two Smale spaces, and we show that in the shift of finite type case, existence of a correspondence is equivalent to existence of a finite equivalence. In Section \ref{sec:equivalences}, we introduce several equivalence relations between correspondences and use these to define various notions of invertibility for correspondences. In turn these notions of invertibility lead to notions of equivalence for Smale spaces. Section \ref{sec:constructioncorr} provides several general methods for constructing correspondences as well as a special case of a K\"{u}nneth formula for Putnam's homology theory for Smale spaces. We use these constructions to show that our new notion of H-equivalence between Smale spaces is strictly stronger than having isomorphic homology theory, even with the automorphisms associated to the dynamics accounted for. In Section \ref{sec:implications} we study the implications of such equivalences for Smale spaces, and relate them back to the implications for shifts of finite type by providing a diagram similar to the diagram on p.261 of \cite{LM}, which shows the implications in the shift of finite type case. Finally, we look ahead to some unanswered questions in Section \ref{sec:outlook}, especially to the notion of an equivalence of correspondences related to the $K$-theory of the $C^*$-algebras associated with Smale spaces. We note that our notion of a correspondence is not related to the notion of C*-correspondence.

\begin{Acknowledgements}
We thank Ian Putnam for many interesting and useful discussions about the content of this paper, and for his guidance and support during this early part of our academic careers. In particular, we thank Ian for remarks concerning the definitions of equivalence considered here, and for suggesting the K\"{u}nneth formula in order to construct examples. 
\end{Acknowledgements}

\section{Preliminaries}\label{prelims}

In this section we briefly provide background on Smale spaces and Putnam's homology for Smale spaces. These results all appear in Putnam's \emph{A Homology Theory for Smale Spaces} \cite{put} in much more detail. The reader would be well advised to have a copy on hand while reading this paper. We use \cite{LM} and \cite[Section 2.2]{put} for notation regarding shifts of finite type; when they do not agree we follow \cite{put}.

\subsection{Smale spaces}\label{Sec:Smale}

A Smale space $(X,\varphi)$ is a dynamical system consisting of a homeomorphism $\varphi$ on a compact metric space $X$ such that the space is locally the product of a coordinate that contracts under the action of $\varphi$ and a coordinate that expands under the action of $\varphi$. The precise definition is as follows.

\begin{definition}[{\cite[p.19]{put}, \cite{Rue1}}]\label{defSmaSpa}
A Smale space $(X,\varphi)$ consists of a compact metric space $X$ with metric $d$ along with a homeomorphism $\varphi: X \to X$ such that there exist constants $ \ep_{X} > 0, 0<\lambda < 1$ and a continuous bracket map
\[ \{(x,y) \in X \times X \mid d(x,y) \leq \ep_{X}\} \mapsto [x, y] \in X \]
satisfying the bracket axioms:
\begin{itemize}
\item[B1] $\left[ x, x \right] = x$,
\item[B2] $\left[ x, [ y, z] \right] = [ x, z]$,
\item[B3] $\left[ [ x, y], z \right] = [ x,z ]$, and
\item[B4] $\varphi[x, y] = [ \varphi(x), \varphi(y)]$;
\end{itemize}
for any $x, y, z$ in $X$ when both sides are defined. In addition, $(X,\varphi)$ is required to satisfy the contraction axioms:
\begin{itemize}
\item[C1] For $x,y \in X$ such that $[x,y]=y$, we have $d(\varphi(x),\varphi(y)) \leq \lambda d(x,y)$ and
\item[C2] For $x,y \in X$ such that $[x,y]=x$, we have $d(\varphi^{-1}(x),\varphi^{-1}(y)) \leq \lambda d(x,y)$.
\end{itemize}
\end{definition}

The local product structure isn't entirely obvious from the definition. Suppose $x \in X$ and $ 0 < \ep \leq \ep_{X}$, then we define 
\begin{align*}
 X^{s}(x, \ep) & := \{ y \in X \mid d(x,y) < \ep, [y,x] =x \}, \\ 
X^{u}(x, \ep) & := \{ y \in X \mid d(x,y) < \ep, [x,y] =x \}.
\end{align*}
The set $X^{s}(x, \ep)$ is called the local stable set and the set $X^{u}(x, \ep)$ is called the local unstable set. For $x,y \in X$ such that $d(x,y)<\ep_X/2$, the bracket map $[x,y]$ is the unique point where the local stable set of $x$ intersects the local unstable set of $y$ and vice versa, as in Figure \ref{Bracket intersection}.
\begin{figure}[ht]
\begin{center}
\begin{tikzpicture}
\tikzstyle{axes}=[]
\begin{scope}[style=axes]
	\draw[<->] (-3,-1) node[left] {$X^s(x,\ep_X)$} -- (1,-1);
	\draw[<->] (-1,-3) -- (-1,1) node[above] {$X^u(x,\ep_X)$};
	\node at (-1.2,-1.4) {$x$};
	\node at (1.1,-1.4) {$[x,y]$};
	\pgfpathcircle{\pgfpoint{-1cm}{-1cm}} {2pt};
	\pgfpathcircle{\pgfpoint{0.5cm}{-1cm}} {2pt};
	\pgfusepath{fill}
\end{scope}
\begin{scope}[style=axes]
	\draw[<->] (-1.5,0.5) -- (2.5,0.5) node[right] {$X^s(y,\ep_X)$};
	\draw[<->] (0.5,-1.5) -- (0.5,2.5) node[above] {$X^u(y,\ep_X)$};
	\node at (0.7,0.2) {$y$};
	\node at (-1.6,0.2) {$[y,x]$};
	\pgfpathcircle{\pgfpoint{0.5cm}{0.5cm}} {2pt};
	\pgfpathcircle{\pgfpoint{-1cm}{0.5cm}} {2pt};
	\pgfusepath{fill}
\end{scope}
\end{tikzpicture}
\caption{The local coordinates of $x,y \in X$ and their bracket maps}
\label{Bracket intersection}
\end{center}
\end{figure}
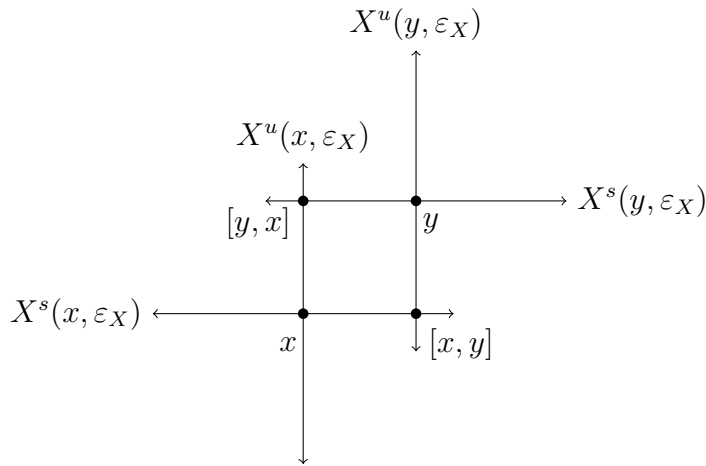

It is worth noting that if a bracket map exists on $(X,\varphi)$ then it is unique. In this paper we study Smale spaces with topological recurrence conditions -- specifically, {\em non-wandering}, {\em irreducible}, and {\em mixing} Smale spaces, see \cite[Definitions 2.1.3, 2.1.4, and 2.1.5]{put}.

For each $x \in X$ the local stable and unstable sets extend to global stable and unstable sets $X^s(x)$ and $X^u(x)$ as follows
\begin{align*}
X^s(x) & := \{y \in X | \lim_{n \rightarrow +\infty} d(\varphi^n(x),\varphi^n(y)) = 0\}, \\
X^u(x) & := \{y \in X | \lim_{n \rightarrow +\infty} d(\varphi^{-n}(x),\varphi^{-n}(y)) = 0\}.
\end{align*}
If $x$ and $y$ are in the same stable or unstable equivalence class we denote this by $x \sim_s y$ and $x \sim_u y$, respectively. We note that for any $x \in X$ we have $X^s(x,\ep) \subset X^s(x)$ and similarly for the unstable sets. 
Moreover, for $y \in X$ we often consider $X^s(y)$ as a topological space itself. The open sets $\{X^s(x,\ep) \mid x \in X^s(y), 0 <\ep< \ep_X\}$ form a neighbourhood base for a locally compact and Hausdorff topology on $X^s(y)$.

The prototypical examples of a Smale spaces are the shifts of finite type, which we now introduce in a manner that will be convenient for defining Putnam's homology theory. A greatly expanded version of our treatment can be found in \cite[Section 2.2]{put}.

A graph $G=(G^0,G^1,i,t)$ consists of finite sets $G^0$ and $G^1$, called the vertices and edges, such that each edge $e \in G^1$ is given by a directed edge from $i(e) \in G^0$ to $t(e) \in G^0$, see \cite[Definition 2.2.1]{put}. A vertex $v$ in a graph is called a \emph{source} if $t^{-1}\{v\}=\varnothing$ and a \emph{sink} if $i^{-1}\{v\}=\varnothing$. Throughout this paper $G$ will always denote a finite directed graph with no sources or sinks. Given a graph $G$, let $A_G$ denote the adjacency matrix of $G$; that is, $(A_G)_{vw}=|\{e\in G^1 \mid i(e)=v \text{ and } t(e)=w\}|$.
%For $K \geq 2$, let $G^K:=\{(e^1,\cdots,e^K) \mid t(e^i)=i(e^{i+1}) \text{ for } 1 \leq i \leq K-1\}$ denote the paths of length $K$. For $K \geq 1$ a new graph is defined by
%\[
%G(K):=\big((G^{K-1},G(K),i(e^1,\cdots,e^K)=(e^1,\cdots,e^{K-1}),t(e^1,\cdots,e^K)=(e^2,\cdots,e^{K})\big)
%\]
%Note that for $1 \leq L \leq K$, iterating the maps $i,t:G^K \to G^{K-1}$ $L$-times we obtain maps $i^L,t^L:G^K \to G^{K-L}$.

The standard definition of a shift of finite type is given in \cite[Definition 2.1.1]{LM}. However, an equivalent, and more convenient for our purposes, definition is to suppose $G$ is finite directed graph with no sources or sinks. Then a shift of finite type $(\Sigma_G,\sigma)$ is defined as the space of bi-infinite sequences of paths 
\[
\Sigma_G:=\{(e^k)_{k \in \Z} \mid e^k \in G \text{ and } t(e^i)=i(e^{i+1})\},
\]
with the shift map $\sigma(e)^k=e^{k+1}$, which is a homeomorphism from $\Sigma_G$ back to itself. 
%Suppose $(e^k)_{k \in \Z} \in \Sigma_G$, then for $K \leq L$ we define $e^{[K,L]}:=(e^k)_{k =K}^L$. 
A shift of finite type becomes a Smale space with the bracket map and metric defined in \cite[Definition 2.2.5]{put}. It is routine to verify that two points in $\Sigma_G$ are stably equivalent if they are right tail equivalent and unstably equivalent if they are left tail equivalent.
Finally, Theorem 2.2.8 in \cite{put} shows that every totally disconnected Smale space is conjugate to a shift of finite type.

\subsection{Maps on Smale spaces}\label{sec:mapsonSmale}

Let $(X,\varphi)$ and $(Y,\psi)$ be Smale spaces. A \emph{map} $\pi:(Y,\psi) \to (X,\varphi)$ consists of a continuous function $\pi:Y \to X$ such that $\pi \circ \psi=\varphi \circ \pi$. A map is called a \emph{factor map} if $\pi:Y \to X$ is surjective.
In \cite[Theorem 2.3.2]{put}, Putnam proves that maps are compatible with the bracket map, and then a straightforward argument shows that for any $y \in Y$ we have $\pi(Y^s(y)) \subset X^s(\pi(y))$.
A map is said to be \emph{s-resolving} if for any $y \in Y$ the restriction $\pi|_{Y^s(y)}:Y^s(y) \to X^s(\pi(y))$ is injective \cite{Fri}.
An s-resolving map is called \emph{s-bijective} if for all $y \in Y$ the restriction $\pi|_{Y^s(y)}:Y^s(y) \to X^s(\pi(y))$ is bijective. Of course, there are analogous definitions of u-resolving and u-bijective. In \cite[Theorem 2.5.6]{put}, Putnam shows that if $\pi:(Y,\psi) \to (X,\varphi)$ is an s-bijective map, then $(\pi(Y),\varphi|_{\pi(Y)})$ is a Smale space. Moreover, suppose  $(Y,\psi)$ is a non-wandering Smale space and $\pi:(Y,\psi) \to (X,\varphi)$ is an s-resolving map, then $\pi$ is s-bijective (\cite[Theorem 2.5.8]{put}).

\begin{definition} \label{defOfFibPro}
Let $(X,\varphi)$, $(Y,\psi)$, and $(Z,\zeta)$ be Smale spaces and $\pi_1: (X,\varphi) \rightarrow (Z,\zeta)$ and $\pi_2: (Y,\psi) \rightarrow (Z,\zeta)$ be finite-to-one factor maps. Then the fibre product (of $(X,\varphi)$ and $(Y,\psi)$ over the maps $\pi_1$ and $\pi_2$) is defined to be the space 
\[ \fib{X}{\pi_{1}}{\pi_{2}}{Y}:=\{ (x,y)\in X\times Y \: | \: \pi_1(x)=\pi_2(y)\}.
\]
It is a dynamical system using the map $\varphi \times \psi$. We denote this dynamical system as
\[
\fib{(X,\varphi)}{\pi_{1}}{\pi_{2}}{(Y,\psi)}
\]
There are projection maps $P_1: \fib{(X,\varphi)}{\pi_{1}}{\pi_{2}}{(Y,\psi)} \rightarrow (X,\varphi)$ and $P_2:\fib{(X,\varphi)}{\pi_{1}}{\pi_{2}}{(Y,\psi)} \rightarrow (Y,\psi)$.
\end{definition}

Fibre products are key to defining Putnam's homology theory. The first reason is that  \cite[Theorem 2.4.2]{put} implies that the fibre product of two Smale spaces is again a Smale space. The second is \cite[Theorem 2.5.13]{put}: suppose $\fib{(X,\varphi)}{\pi_{1}}{\pi_{2}}{(Y,\psi)}$ is the fibre product in Definition \ref{defOfFibPro} and $P_1, P_2$ are the projection maps, then if $\pi_1$ is s-bijective so is $P_2$. Similarly if $\pi_2$ is u-bijective, then so is $P_1$.

%Fibre products of Smale spaces play an important role in Putnam's homology theory. As in \cite[Definition 2.4.1]{put}, suppose $(X,\varphi)$, $(Y_1,\psi_1)$, and $(Y_2,\psi_2)$ are Smale spaces and $\pi_i:(Y_i,\psi_i) \to (X,\varphi)$ are maps for $i=1,2$. The \emph{fibre product} is
%\[
%\fib{Y_1}{\pi_1}{\pi_2}{Y_2} :=\{(y_1,y_2) \in Y_1 \times Y_2 \mid \pi_1(y_1)=\pi_2(y_2)\}.
%\]
%In \cite[Theorem 2.4.1]{put}, Putnam shows that the dynamical system $(\fib{Y_1}{\pi_1}{\pi_2}{Y_2},\psi_1 \times \psi_2)$ is again a Smale space. 
%A key result for Putnam's homology theory is \cite[Theorem 2.5.13]{put}:
%For $i=1,2$, suppose $\pi_i:(Y_i,\psi_i) \to (X,\varphi)$ are Smale space maps and let $P_i:\fib{Y_1}{\pi_1}{\pi_2}{Y_2} \to (Y_i,\psi_i)$ be the projection maps, then if $\pi_1$ is s-bijective, so is $P_2$. 

The fibre product can be iterated, and we will require $N$-fold fibre products over a single space. Suppose $(X,\varphi)$ and $(Y,\psi)$ are Smale spaces and $\pi:(Y,\psi) \to (X, \varphi)$ is a finite-to-one factor map, then we define
\[
Y_N(\pi) =\{(y_0,y_1, \cdots, y_N) \in Y^N \mid \pi(y_i)=\pi(y_j) \text{ for all } 0\leq i,j \leq N\}.
\]
The map $\psi(y_0,y_1,\dots,y_N):=(\psi(y_0),\psi(y_1),\dots,\psi(y_N)):Y_N(\pi) \to Y_N(\pi)$ makes $(Y_N(\pi),\psi)$ into a Smale space \cite[Proposition 2.4.4]{put}.
Suppose $(Y_N(\pi), \psi)$ is an $N$-fold fibre product of $(Y,\psi)$ over $(X,\varphi)$, and let $\check{y}_n$ denote deleting the $n$th coordinate of $Y_N(\pi)$, then we define $\delta_n:Y_N(\pi) \to Y_{N-1}(\pi)$ by
\begin{equation}\label{delete_n_Cech}
\delta_n(y_0,y_1,\cdots, y_N) = (y_0,y_1,\cdots,\check{y_n},\cdots,y_N).
\end{equation}
Putnam shows in \cite[Theorem 2.5.14]{put} that if $\pi:(Y,\psi) \to (X,\varphi)$ is s-bijective, then $\delta_n:(Y_N(\pi),\psi) \to (Y_{N-1}(\pi),\psi)$ is s-bijective for all $N \geq 1$ and $0\leq n \leq N$.

Bowen's Theorem \cite[Theorem 28]{Bow} for Smale spaces says that if $(X,\varphi)$ is a non-wandering Smale space, then there is a shift of finite type $(\Sigma,\sigma)$ and a factor map $\pi:(\Sigma,\sigma) \to (X,\varphi)$ such that $\pi$ is finite-to-one and one-to-one on a dense $G_\delta$ subset of $\Sigma$. In order to construct his homology theory, Putnam generalized Bowen's Theorem to show that every non-wandering Smale space $(X,\varphi)$ has an s/u-bijective pair as follows.

\begin{definition}[{\cite[Definition 2.6.2]{put}}]\label{s/u-bijective pair}
Suppose $(X,\varphi)$, $(Y,\psi)$, and $(Z,\zeta)$ are Smale spaces. The tuple $\pi=(Y,\psi,\pi_s,Z,\zeta,\pi_u)$ is called an s/u-bijective pair if
\begin{enumerate}
\item $\pi_s:(Y,\psi) \to (X,\varphi)$ is an s-bijective factor map,
\item $Y^u(y)$ is totally disconnected for all $y \in Y$,
\item $\pi_u:(Z,\zeta) \to (X,\varphi)$ is a u-bijective factor map, and
\item $Z^s(z)$ is totally disconnected for all $z \in Z$.
\end{enumerate}
\end{definition}

\begin{thm}[Putnam's generalization of Bowen's Theorem {\cite[Theorem 2.6.3]{put}}]
If $(X,\varphi)$ is a non wandering Smale space, then there exists an s/u-bijective pair for $(X,\varphi)$.
\end{thm}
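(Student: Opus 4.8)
The plan is to start from Bowen's Theorem and then \emph{resolve} the resulting symbolic cover into its stable and unstable parts. By Bowen's Theorem there is a shift of finite type $(\Sigma,\sigma)$ and a finite-to-one factor map $\pi\colon(\Sigma,\sigma)\to(X,\varphi)$, and we may take $\Sigma$ irreducible (hence $(\Sigma,\sigma)$ non-wandering) since $(X,\varphi)$ is non-wandering. The whole problem is symmetric under time reversal: replacing $\varphi$ by $\varphi^{-1}$ interchanges the roles of stable and unstable sets, so a construction producing a pair $(Y,\psi,\pi_s)$ satisfying conditions (1) and (2) for $(X,\varphi)$ will, applied to $(X,\varphi^{-1})$ and then read back in forward time as $(Z,\zeta)=(Y',(\psi')^{-1})$, produce a pair satisfying (3) and (4). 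Thus it suffices to construct the s-bijective half.

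The candidate for $(Y,\psi)$ is a quotient of $(\Sigma,\sigma)$ that collapses the symbolic cover in the stable direction only. Concretely, I want the finite-to-one map $\pi$ to factor as $\Sigma \xrightarrow{\,q\,} Y \xrightarrow{\,\pi_s\,} X$ with $q$ u-bijective and $\pi_s$ s-bijective; the fibre-product viewpoint of \cite[Theorem 2.5.13]{put} makes this natural, since $\Sigma$ should play the role of $Y\times_X Z$, which is totally disconnected in both directions. The relation to collapse is therefore $e\approx f$ iff $\pi(e)=\pi(f)$ and $e\sim_s f$. One checks that $\approx$ is an equivalence relation compatible with the bracket: if $e\approx e'$ and $g\approx g'$ then, using $[x,y]\sim_s x$ (which follows from B3 and B1), one has $[e,g]\sim_s e\sim_s e'\sim_s[e',g']$, while $\pi[e,g]=[\pi e,\pi g]=[\pi e',\pi g']=\pi[e',g']$, so $[e,g]\approx[e',g']$. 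Granting that $Y=\Sigma/\!\approx$ is a Smale space with $q$ u-bijective, the induced $\pi_s$ is injective on stable sets by the very definition of $\approx$, hence s-resolving; since $Y$ inherits non-wandering from $\Sigma$, \cite[Theorem 2.5.8]{put} upgrades $\pi_s$ to s-bijective, giving (1). Condition (2), total disconnectedness of $Y^u(y)$, then follows from the total disconnectedness of the unstable sets of the shift of finite type together with the fact that the u-bijective map $q$ restricts to homeomorphisms of unstable sets.

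The main obstacle is precisely the clause granted above: that $Y=\Sigma/\!\approx$ is genuinely a Smale space. The difficulty is that global stable equivalence is \emph{not} a closed relation — one can have $e_n\sim_s f_n$ with $e_n\to e$ and $f_n\to f$ but $e\not\sim_s f$ — so $\approx$ need not be closed and the naive quotient need not be compact or Hausdorff. Resolving this is the technical heart of the argument: rather than forming a single global quotient, I would build $Y$ from the local product structure, performing the stable collapse inside local charts $X^s(x,\ep)\times X^u(x,\ep)$ where the relevant identifications \emph{are} controlled, and then show these charts glue to a compact metric space carrying a continuous bracket for which the contraction axioms C1 and C2 hold with constants inherited from $(\Sigma,\sigma)$. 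Verifying continuity and compactness of this descended structure, and confirming that $q$ is u-bijective and $\pi_s$ is s-resolving, is where essentially all the work lies; items (1)--(4) then follow formally as indicated, and the u-bijective half comes for free by the time-reversal symmetry.
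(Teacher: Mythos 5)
There is a genuine gap, and it is exactly the step you ``grant.'' Your plan is canonical in the Bowen cover: given \emph{any} finite-to-one factor map $\pi\colon(\Sigma,\sigma)\to(X,\varphi)$ from an irreducible shift of finite type, your quotient (or locally-glued) construction is supposed to produce a factorization $\Sigma\xrightarrow{\,q\,}Y\xrightarrow{\,\pi_s\,}X$ with $q$ u-bijective and $\pi_s$ s-bijective. But such factorizations simply do not exist in general, so no amount of care with local charts can complete the deferred step. This is not a hidden subtlety -- it is flagged in this very paper: in the discussion following Theorem \ref{putLiftThm} (Section \ref{sec:constructioncorr}), factoring a factor map through resolving maps corresponds to degenerating the lifting diagram by taking $\alpha$ or $\beta$ equal to the identity, which ``is not possible in general''; up to the time reversal you invoke, your desired factorization of the Bowen cover is precisely the $\alpha=\id$ degeneration, and the introduction of \cite{PutLift} discusses the obstructions/counterexamples. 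The non-closedness of $\sim_s$ that you correctly identify is a symptom of this, not a removable technicality: when the obstruction is present, there is no Smale space $Y$ realizing the collapse at all. (A smaller symptom of the same disease: your bracket-compatibility computation needs $[e',g']$ to be defined, i.e.\ $e'$ and $g'$ to be close, which $e\approx e'$, $g\approx g'$ does not guarantee even when $e$ and $g$ are close.)

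For comparison: the paper does not prove this statement -- it quotes it from \cite[Theorem 2.6.3]{put} -- and Putnam's proof there goes in the opposite direction from yours: one \emph{enlarges} the Bowen cover rather than collapsing it. Reduce to $X$ irreducible (non-wandering Smale spaces decompose into finitely many irreducible clopen pieces), take Bowen's map $\pi\colon(\Sigma,\sigma)\to(X,\varphi)$ almost one-to-one with $\Sigma$ irreducible, and apply the Lifting Theorem (Theorem \ref{putLiftThm}) to $\pi$. This yields an s-resolving map $\tilde{\pi}\colon(\tilde{\Sigma},\tilde{\sigma})\to(\tilde{X},\tilde{\varphi})$ and u-resolving maps $\alpha\colon\tilde{\Sigma}\to\Sigma$, $\beta\colon\tilde{X}\to X$. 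Since $\alpha$ is finite-to-one and $\Sigma$ is totally disconnected, $\tilde{\Sigma}$ is totally disconnected, hence a shift of finite type; since $\tilde{\pi}$ is s-resolving between irreducible spaces it is s-bijective by \cite[Theorem 2.5.8]{put}, so the stable sets of $\tilde{X}$ are homeomorphic to those of $\tilde{\Sigma}$ and are therefore totally disconnected; and $\beta$ is u-bijective for the same reason. Thus $(\tilde{X},\tilde{\varphi},\beta)$ supplies conditions (3)--(4) of Definition \ref{s/u-bijective pair}, and the time-reversal symmetry -- which is correct in your write-up, and is also how the other half is obtained -- supplies (1)--(2). So your outer frame (Bowen plus time reversal, plus the observation that a suitably factored SFT cover suffices) matches the real argument; what cannot be avoided, and what your quotient construction cannot replace, is the Lifting Theorem of \cite{PutLift}, which is itself an entire paper's worth of work.
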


Suppose $(X,\varphi)$ is a Smale space with an s/u-bijective pair $\pi=(Y,\psi,\pi_s,Z,\zeta,\pi_u)$. For $L,M \geq 0$ let
\[
\Sigma_{L,M}(\pi):=\{(y_0,\cdots,y_L,z_0,\cdots,z_M) \mid y_l \in Y, z_m \in Z, \pi_s(y_l)=\pi_u(z_m), 0 \leq l \leq L, 0 \leq m \leq M\}.
\]
Let $\sigma: \Sigma_{L,M}(\pi) \to \Sigma_{L,M}(\pi)$ be the map defined by
\[ 
\sigma_{L,M}(\pi)(y_0,\cdots,y_L,z_0,\cdots,z_M)=(\psi(y_0),\cdots,\psi(y_L),\zeta(z_0),\cdots,\zeta(z_M)).
\]
For $0 \leq l \leq L$ and $0 \leq m \leq M$, and define maps $\delta_{l,\,}:\Sigma_{L,M}(\pi) \to \Sigma_{L-1,M}(\pi)$ and $\delta_{\,,m}:\Sigma_{L,M}(\pi) \to \Sigma_{L,M-1}(\pi)$ by
\begin{align*}
\delta_{l,\,}(y_0,\cdots,y_L,z_0,\cdots,z_M)&=(y_0,\cdots,\check{y_l},\cdots,y_L,z_0,\cdots,z_M), \text{ and} \\
\delta_{\,,m}(y_0,\cdots,y_L,z_0,\cdots,z_M)&=(y_0,\cdots,y_L,z_0,\cdots,\check{z_m},\cdots,z_M).
\end{align*}
In \cite[Theorem 2.6.6]{put}, Putnam proves that if $\pi$ is an s/u-bijective pair for $(X,\varphi)$, then $(\Sigma_{L,M}(\pi),\sigma)$ is a shift of finite type for all $L,M \geq 0$. Moreover, $\delta_{l,\,}:\Sigma_{L,M}(\pi) \to \Sigma_{L-1,M}(\pi)$ is an s-bijective factor map and $\delta_{\,,m}:\Sigma_{L,M}(\pi) \to \Sigma_{L,M-1}(\pi)$ is a u-bijective factor map \cite[Theorem 2.6.13]{put}.

\subsection{Krieger's dimension groups for shifts of finite type}\label{DimensionGroups}

Wolfgang Krieger defined two abelian groups associated to a shift of finite type $(\Sigma,\sigma)$ in \cite{Kri}. In Putnam's homology theory, these are the homology groups for the zero dimensional Smale spaces. After formally defining Krieger's dimension groups, we give the properties of the dimension group required in this paper.

%To construct the stable dimension group, let
% \[
% CO^s(\Sigma,\sigma):=\{E \in \Sigma^s(e) \mid e \in \Sigma \text{ and } E \text{ is a nonempty clopen subset of } \Sigma^s(e)\}.
% \]
%For $E,F \in CO^s(\Sigma,\sigma)$ such that $[E,F]=E$ and $[F,E]=F$ let $\sim$ be the smallest equivalence relation on $CO^s(\Sigma,\sigma)$ such that $E \sim F$ if and only if $\sigma(E)=\sigma(F)$. Then for $E \in CO^s(\Sigma,\sigma)$ let $[E]$ denote the equivalence class of $E$.
% 
%\begin{definition}[\cite{Kri}, {\cite[Definition 3.3.2]{put}}]
%Let $(\Sigma,\sigma)$ be a shift of finite type. Define $D^s(\Sigma,\sigma)$ to be the free abelian group on the $\sim$-equivalence classes of $CO^s(\Sigma,\sigma)$ modulo the subgroup generated by $[E \cup F]-[E]-[F]$, where $E,F,E \cup F \in CO^s(\Sigma,\sigma)$ and $E \cap F = \varnothing$.
%\end{definition}

\begin{definition}[\cite{Kri,LM}] Given a graph $G$, the dimension groups of $(\Sigma_G, \sigma)$ are defined to be
\[
D^s(\Sigma_G, \sigma):= \lim_{\longrightarrow}\big( \Z^{|G^0|} \stackrel{A_G}{\longrightarrow} \Z^{|G^0|} \stackrel{A_G}{\longrightarrow} \Z^{|G^0|}
 \stackrel{A_G}{\longrightarrow} \big)
 \]
 and 
\[
D^u(\Sigma_G, \sigma):=  \lim_{\longrightarrow}\big( \Z^{|G^0|} \stackrel{A^t_G}{\longrightarrow} \Z^{|G^0|} \stackrel{A^t_G}{\longrightarrow} \Z^{|G^0|}
 \stackrel{A^t_G}{\longrightarrow}\big),\]
where $A_G$ is the adjacency matrix of $G$. 
We denote elements in the inductive limit as $[v, j]$ where $v \in \Z^{|G^0|}$ and $j\in \N$. Moreover, since $A_G$ and $A_G^t$ are positive maps and $(\Z^+)^{|G^0|}$ is a semigroup, there are dimension semigroups $D^s(\Sigma_G, \sigma)^+$ and $D^u(\Sigma_G, \sigma)^+$ that define a natural order structure on $D^s(\Sigma_G, \sigma)$ and $D^u(\Sigma_G, \sigma)$, respectively.
\end{definition}

\begin{definition} \label{mapFromMatrix}
Given graphs $G$ and $H$ and a nonnegative integer matrix $F\ne 0$ such that $F A_G = A_H F$. We let $F_*$ denote the map from $D^s(\Sigma_G, \sigma)$ to $D^s(\Sigma_H, \sigma)$ defined (at the level of inductive limits) via $[v,j] \mapsto [Fv,j]$.
In a similar way, let $F^*$ denote the map from $D^u(\Sigma_H, \sigma)$ to $D^u(\Sigma_G, \sigma)$ defined via $[v,j] \mapsto [F^tv,j]$. 
\end{definition}

Given graphs $G$ and $H$, a surjective graph homomorphism $\theta:G \to H$ is called a \emph{left-covering map} if $\theta:t^{-1}\{v\} \to t^{-1}\{\theta(v)\}$ is bijective for all $v \in G^0$ and is a \emph{right-covering map} if $\theta:i^{-1}\{v\} \to i^{-1}\{\theta(v)\}$ is bijective for all $v \in G^0$. Left-covering maps give s-bijective maps at the shift of finite type level. Similarly, right-covering maps give u-bijective maps at the shift of finite type level.

The dimension group is functorial with respect to s- and u-bijective maps. More precisely, if $\pi:(\Sigma_G, \sigma) \rightarrow (\Sigma_H, \sigma)$ is s-bijective, then, as discussed in \cite[Section 3.4]{put}, one obtains a map $\pi^s: D^s(\Sigma_G, \sigma) \rightarrow D^s(\Sigma_H, \sigma)$. On the other hand, if $\pi$ is u-bijective, then, as in \cite[Section 3.5]{put}, there is a map $\pi^{s*}: D^s(\Sigma_H, \sigma) \rightarrow D^s(\Sigma_G, \sigma)$. Similar results hold for $D^u( \: \cdot \: )$; the induced maps in this case are denoted by $\pi^u$ and $\pi^{u*}$, see \cite[Sections 3.4 and 3.5]{put} for further details. 

The next two propositions are special cases of Theorems 3.4.4 and 3.5.5 in \cite{put}, so the proofs are omitted.

\begin{prop} \label{leftcoveringMatrixMap}
Let $\theta:G \rightarrow H$ be a left-covering map. Then 
$\theta^s : D^s(\Sigma_G, \sigma) \rightarrow D^s(\Sigma_H,\sigma)$ is given by $[v, j] \mapsto [Dv, j]$, where $D$ is the matrix determined by $D_{IJ}=1$ if $\theta_0(J)=I$ and $D_{IJ}=0$ otherwise.
\end{prop}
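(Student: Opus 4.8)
The plan is to realize the induced map concretely and then evaluate it on the generators corresponding to vertices. Write $\theta_0 : G^0 \to H^0$ and $\theta_1 : G^1 \to H^1$ for the vertex and edge maps of $\theta$. First I would record that $\theta$ induces a factor map $\pi : (\Sigma_G, \sigma) \to (\Sigma_H, \sigma)$ by $\pi((e^k)_{k \in \Z}) = (\theta_1(e^k))_{k \in \Z}$, and that $\pi$ is s-bijective. Indeed, a local stable set in $\Sigma_G$ is obtained by fixing all forward coordinates of a point and varying the backward path, i.e.\ by freely choosing incoming edges step by step into the past; the left-covering hypothesis says precisely that $\theta_1$ restricts to a bijection $t^{-1}\{v\} \to t^{-1}\{\theta_0(v)\}$ on incoming edges at every vertex, so backward paths lift uniquely and $\pi|_{\Sigma_G^s(y)} : \Sigma_G^s(y) \to \Sigma_H^s(\pi(y))$ is a bijection. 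This is exactly the assertion (recalled in the text) that left-covering maps give s-bijective maps at the shift of finite type level, and it justifies forming $\theta^s := \pi^s$ as in \cite[Section 3.4]{put}.

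Next I would pass to the concrete inductive-limit model of $D^s$. Under the convention $(A_G)_{vw} = |\{e \in G^1 : i(e)=v,\, t(e)=w\}|$, the connecting map sends $e_v \mapsto A_G e_v = \sum_w |\{e : w \to v\}|\, e_w$, so the generator $[e_v, j]$ is naturally identified with a compact open ``vertex cylinder'' $C_v$ of backward paths terminating at $v$ (with fixed forward tail), and $A_G$ implements the one-step extension of such a cylinder by an incoming edge. The map $\pi^s$ of \cite[Theorem 3.4.4]{put} is, on these generators, the pushforward $C \mapsto \pi(C)$ of compact open subsets of stable sets, which is well defined precisely because $\pi$ is a homeomorphism on each stable set.

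The computation is then immediate. Since $\theta_1$ carries the incoming edges at $v$ bijectively onto the incoming edges at $\theta_0(v)$ and respects $i$ and $t$, the pushforward carries $C_v$ bijectively onto $C_{\theta_0(v)}$, with no collapsing of distinct backward extensions; hence $\pi^s([e_v, j]) = [e_{\theta_0(v)}, j] = [D e_v, j]$, because $D e_v = \sum_I D_{Iv} e_I = e_{\theta_0(v)}$. Extending $\Z$-linearly gives $\theta^s([v,j]) = [Dv,j]$ for all $v \in \Z^{|G^0|}$. To confirm that this is compatible with the two inductive systems, so that no level shift appears and the map is well defined, I would check directly that $D A_G = A_H D$: evaluated at an index pair $(I,w)$, the left side counts edges of $G$ into $w$ whose source lies over $I$, and the right side counts edges $I \to \theta_0(w)$ in $H$, and these agree by the left-covering bijection on $t^{-1}\{w\}$.

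I expect the main obstacle to be bookkeeping rather than mathematical depth: matching Putnam's direction conventions for the generators of $D^s$ and for $\pi^s$ (stable sets vary the past, so both $A_G$ and the relevant covering bijection concern incoming edges), and confirming that the pushforward of a vertex cylinder is a single generator with multiplicity one. This last point is where left-covering is indispensable, for a general graph homomorphism the restriction to a stable set need not be injective, the pushforward would not be a clean single generator, and the formula would fail.
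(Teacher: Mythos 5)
Your proposal is correct, and it is worth noting that the paper itself contains no proof of this proposition: it is dismissed as a special case of Theorems 3.4.4 and 3.5.5 of \cite{put}, ``so the proofs are omitted.'' Your argument therefore reconstructs what the paper delegates to Putnam, and it does so along the right lines: the left-covering condition gives unique lifting of backward paths, hence s-bijectivity of the induced factor map (this is Putnam's Theorem 2.5.17, which the paper invokes in the proof of Theorem \ref{838LMinCor}); the induced map on dimension groups acts on generators by pushforward of compact open subsets of stable sets; and the intertwining identity $DA_G = A_H D$, which you verify correctly via the bijection $t^{-1}\{w\} \to t^{-1}\{\theta_0(w)\}$ together with $i(\theta_1(e)) = \theta_0(i(e))$, is exactly what makes the level-preserving formula $[v,j]\mapsto [Dv,j]$ well defined on the inductive limit. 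Two small repairs. First, the pushforward description of $\pi^s$ on classes of compact open sets is Putnam's Theorem 3.4.1 (the same result this paper uses in the proof of Lemma \ref{nToOneShiftLemma}), not Theorem 3.4.4; the latter is essentially the statement you are proving, so citing it would be circular. Second, the identification of the inductive-limit generator $[e_v,j]$ with the class of a vertex cylinder $C_v$, including the fact that this class does not depend on the choice of fixed forward tail (which uses the unstable holonomy/bracket identification built into Krieger's construction), is itself a theorem in Putnam's Section 3.3 rather than something ``natural''; as written you assume it, which is acceptable in a sketch but is precisely where the remaining technical content of the omitted proof lives. With those citations adjusted, your computation $\pi(C_v) = C_{\theta_0(v)}$ with no collapsing, and hence $\pi^s([e_v,j]) = [e_{\theta_0(v)},j] = [De_v,j]$, completes the argument.
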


\begin{prop} \label{rightcoveringMatrixMap}
Let $\theta: G \rightarrow H$ be a right-covering map. Then $\theta^{s*}: D^s(\Sigma_H, \sigma) \rightarrow D^s(\Sigma_G, \sigma)$ is given by $[v, j] \mapsto [Ev,j]$,
where $E$ is the matrix determined by $E_{IJ}=1$ if $\theta_0(I)=J$ and $E_{IJ}=0$ otherwise.
\end{prop}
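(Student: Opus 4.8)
The statement is a specialization of Putnam's general description of the map that a u-bijective factor map induces on the stable dimension group (\cite[Theorem 3.5.5]{put}), and the plan is to extract the explicit matrix from that construction. Two things must be checked: that the rule $[v,j] \mapsto [Ev,j]$ defines a homomorphism of the inductive limits at all, and that it agrees with $\theta^{s*}$. I would treat these separately, as the first is purely combinatorial while the second is where the dependence on Putnam's machinery really lies.

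For well-definedness it suffices to verify the intertwining relation $E A_H = A_G E$, since this is exactly what is needed for $E \colon \Z^{|H^0|} \to \Z^{|G^0|}$ to pass to the inductive limits (compare Definition \ref{mapFromMatrix}, with the roles of $G$ and $H$ interchanged). Write $\theta_0, \theta_1$ for the vertex and edge maps of $\theta$ and fix $I \in G^0$, $w \in H^0$. Unwinding the definitions, the $(I,w)$-entry of $E A_H$ equals $(A_H)_{\theta_0(I), w}$, the number of edges of $H$ from $\theta_0(I)$ to $w$, whereas the $(I,w)$-entry of $A_G E$ equals $\sum_{K \in \theta_0^{-1}\{w\}} (A_G)_{IK}$, the number of edges of $G$ from $I$ to a vertex lying over $w$. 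The right-covering hypothesis says $\theta_1$ restricts to a bijection $i^{-1}\{I\} \to i^{-1}\{\theta_0(I)\}$; because $\theta$ intertwines the terminal-vertex maps, this bijection carries precisely the edges from $I$ into $\theta_0^{-1}\{w\}$ onto the edges from $\theta_0(I)$ to $w$. Hence the two entries agree, $E A_H = A_G E$, and $[v,j] \mapsto [Ev,j]$ is a well-defined homomorphism $D^s(\Sigma_H,\sigma) \to D^s(\Sigma_G,\sigma)$.

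To identify this map with $\theta^{s*}$ I would first recall (as noted before Proposition \ref{leftcoveringMatrixMap}) that a right-covering $\theta$ induces a u-bijective factor map $(\Sigma_G,\sigma) \to (\Sigma_H,\sigma)$, so that $\theta^{s*}$ is defined, and then trace Putnam's construction on the canonical generators $[\delta_J, j]$, $J \in H^0$, of the $j$-th stage of the inductive system for $D^s(\Sigma_H,\sigma)$. The conceptual content is that a right-covering map is covering-like: being a bijection on the edges leaving each vertex, its contravariant effect on the stable dimension group is a preimage, so that the clopen set attached to a vertex $J$ of $H$ pulls back to the disjoint union of the clopen sets attached to the vertices $I \in \theta_0^{-1}\{J\}$. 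Since $E\delta_J = \sum_{I \in \theta_0^{-1}\{J\}} \delta_I$, this yields $\theta^{s*}[\delta_J, j] = [E\delta_J, j]$, and $\Z$-linearity extends the formula to all of $D^s(\Sigma_H,\sigma)$.

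The main obstacle is the last step: Putnam defines $\pi^{s*}$ for a u-bijective map through the stable-algebra and dimension-group apparatus of \cite[Section 3.5]{put} rather than by an a priori formula, so the genuine work is to show that, for the particular u-bijective map arising from a right-covering $\theta$, this abstract map collapses to pullback by $\theta_0$. Once the generators of the inductive limit are matched with the clopen sets that define it, the bijectivity of $\theta_1$ on outgoing edges forces the reduction; the remaining care is mostly bookkeeping, keeping the orientation of the connecting maps $A_G$ and $A_H$ and the index conventions consistent throughout.
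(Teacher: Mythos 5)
Your proposal is correct and takes essentially the same route as the paper: the paper gives no argument at all, simply noting that this proposition is a special case of Putnam's Theorem 3.5.5 and omitting the proof, and your writeup is a correct elaboration of exactly that specialization. Both your intertwining computation $EA_H = A_G E$ (which is what well-definedness on the inductive limits requires) and your identification of $\theta^{s*}$ with pullback of clopen sets---where the right-covering property gives unique lifting of right-infinite paths, so the set attached to $J$ pulls back to the disjoint union over $I \in \theta_0^{-1}\{J\}$---are consistent with the paper's conventions for $D^s$ and with the preimage description of $\theta^{s*}$ used elsewhere in the paper (e.g.\ in the proof of Lemma \ref{nToOneShiftLemma}).
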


The basic properties of the induced maps in the general case are summarized in the next proposition. Its proof is fairly straightforward using the functoriality results for the dimension group found in \cite[Sections 3.4 and 3.5]{put}.

\begin{prop} \label{ResOrdAndAutSFT}
Let $\pi: (\Sigma, \sigma) \rightarrow (\Sigma^{\prime}, \sigma)$ be a factor map between shifts of finite type. Then
\begin{enumerate}
\item If $\pi$ is s-bijective, then the induced maps $\pi^s: D^s(\Sigma, \sigma) \rightarrow D^s(\Sigma^{\prime}, \sigma)$ and $\pi^{u*}:D^u(\Sigma^{\prime}, \sigma) \rightarrow D^u(\Sigma, \sigma)$ preserve the order structure and intertwine the automorphisms induced from the relevant shift maps.
\item If $\pi$ is u-bijective, then the induced maps $\pi^u: D^u(\Sigma, \sigma) \rightarrow D^u(\Sigma^{\prime}, \sigma)$ and $\pi^{s*}:D^s(\Sigma^{\prime}, \sigma) \rightarrow D^s(\Sigma, \sigma)$ preserve the order structure and intertwine the automorphisms induced from the relevant shift maps.
\end{enumerate}
\end{prop}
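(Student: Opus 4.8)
The plan is to reduce the entire statement to the case of a covering map of graphs, where Propositions \ref{leftcoveringMatrixMap} and \ref{rightcoveringMatrixMap} provide an explicit nonnegative integer matrix description of the induced map, and then to verify order preservation and automorphism intertwining by a direct computation on the inductive limit. Throughout I would use that the automorphism induced by $\sigma$ on $D^s(\Sigma_G,\sigma)$ is, at the level of the limit $\varinjlim(\Z^{|G^0|}\xrightarrow{A_G}\Z^{|G^0|}\xrightarrow{A_G}\cdots)$, the map $[v,j]\mapsto[A_Gv,j]$, and similarly $[v,j]\mapsto[A_G^tv,j]$ on $D^u(\Sigma_G,\sigma)$.

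First I would record the time-reversal symmetry that cuts the work in half. Passing from $(X,\varphi)$ to $(X,\varphi^{-1})$ interchanges stable and unstable sets, interchanges s-bijective and u-bijective factor maps, and interchanges $D^s$ with $D^u$ (the latter being built from $A_G^t$ in place of $A_G$). Under this symmetry $\pi^s$ corresponds to $\pi^u$ and $\pi^{u*}$ corresponds to $\pi^{s*}$, so part (2) follows formally from part (1). Moreover, at the graph level reversing all edges ($G\mapsto G^{op}$, with $A_{G^{op}}=A_G^t$ and $D^u(\Sigma_G,\sigma)=D^s(\Sigma_{G^{op}},\sigma)$) turns a left-covering map into a right-covering map; hence the contravariant statement for $\pi^{u*}$ in part (1) is obtained from Proposition \ref{rightcoveringMatrixMap} by exactly the same computation as the covariant one. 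Thus it suffices to analyze $\pi^s$ for an s-bijective factor map $\pi$.

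Next I would reduce to covering maps. After passing to higher block presentations of $\Sigma$ and $\Sigma'$ --- a conjugacy, under which the dimension groups are identified by order-isomorphisms that intertwine the shift-induced automorphisms --- any s-bijective (equivalently, by \cite[Theorem 2.5.8]{put}, s-resolving) factor map becomes a $1$-block code, that is, is induced by a left-covering graph homomorphism $\theta\colon G\to H$. Since Putnam's construction of the induced map in \cite[Section 3.4]{put} is compatible with conjugacy, it suffices to verify the two properties for $\theta^s$. By Proposition \ref{leftcoveringMatrixMap}, $\theta^s$ is the map $[v,j]\mapsto[Dv,j]$ for the nonnegative integer matrix $D$, and well-definedness of this map between the two inductive limits forces $DA_G=A_HD$. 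Because $D$ has nonnegative entries it carries $(\Z^+)^{|G^0|}$ into $(\Z^+)^{|H^0|}$, so $\theta^s$ maps $D^s(\Sigma_G,\sigma)^+$ into $D^s(\Sigma_H,\sigma)^+$ and preserves the order; and the identity $DA_G=A_HD$ says precisely that $[v,j]\mapsto[DA_Gv,j]=[A_HDv,j]$, which is exactly the assertion that $\theta^s$ intertwines the two automorphisms $[v,j]\mapsto[A_Gv,j]$ and $[v,j]\mapsto[A_Hv,j]$. This settles the reduced case.

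The only genuinely nontrivial point, and the step I would be most careful about, is the reduction to covering maps itself: one must confirm that Putnam's definition of the induced maps for an arbitrary s-/u-bijective factor map (\cite[Sections 3.4 and 3.5]{put}) agrees, after higher block recoding, with the covering-map maps of Propositions \ref{leftcoveringMatrixMap} and \ref{rightcoveringMatrixMap}, and that the identifying isomorphisms coming from the conjugacy are themselves order-preserving and automorphism-intertwining. Once this bookkeeping is in place, order preservation follows from nonnegativity of the defining matrix and the intertwining property follows from the relation $DA_G=A_HD$ (respectively its transpose), with everything else being routine.
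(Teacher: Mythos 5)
Your strategy differs from the paper's, which does no recoding at all: the paper's (omitted, one-line) proof just invokes Putnam's functoriality results from \cite[Sections 3.4 and 3.5]{put}, where $\pi^s$ and $\pi^{u*}$ are defined on Krieger's model of the dimension group by sending the class of a compact open subset $E$ of a stable (resp.\ unstable) set to $[\pi(E)]$ (resp.\ to a sum $\sum_i [F_i]$ of classes of compact open sets); positivity is then built into the formulas, since the positive cone is generated by such classes, and the intertwining is functoriality, $(\pi\circ\sigma)^s=\pi^s\circ\sigma^s$, applied to the relation $\pi\circ\sigma=\sigma\circ\pi$. Your reduction-to-covering-maps route could in principle be made to work, but as written it has a genuine gap at its cornerstone: the sentence asserting that after higher-block recoding an s-bijective factor map ``becomes a $1$-block code, that is, is induced by a left-covering graph homomorphism.'' The ``that is'' conflates two very different things. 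Higher-block recoding does make any factor map a $1$-block code, i.e.\ induced by a graph homomorphism $\theta$, but left-covering --- bijectivity of $\theta$ on each incoming-edge set $t^{-1}\{v\}$ --- is far stronger and does not follow: an s-resolving $1$-block code need not even be left-resolving at the graph level, because two distinct edges into $v$ with the same $\theta$-image need not extend to two bi-infinite paths with the same image, so no violation of s-resolving is produced. Obtaining a left-covering presentation is a real theorem: one must first recode the s-resolving (left-closing) map to a left-resolving $1$-block code (the stable-sets analogue of \cite[Proposition 5.1.11]{LM}) and then argue that left-resolving together with surjectivity of $\pi$ on stable sets forces bijectivity on incoming-edge sets --- or cite the corresponding presentation theorem in \cite{put}. (A minor additional slip: your parenthetical ``equivalently, by \cite[Theorem 2.5.8]{put}, s-resolving'' needs a non-wandering hypothesis that the proposition does not assume; only the trivial direction, s-bijective implies s-resolving, is available, though that is all you actually use.)

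A second point concerns the step you flag as ``bookkeeping'': identifying Putnam's induced maps for $\pi$ with the matrix maps of Propositions \ref{leftcoveringMatrixMap} and \ref{rightcoveringMatrixMap} through the recoding conjugacies, and knowing that those conjugacies induce order-isomorphisms intertwining the shift automorphisms. The only available proof of that last assertion is the same intrinsic one: in Putnam's picture a conjugacy is s-bijective, and its induced map carries classes of compact open sets to classes of compact open sets. Once you grant this for the conjugacies, the identical observation applied to $\pi$ itself proves the whole proposition directly, which is exactly what the paper does; so even after the gap above is repaired, the reduction does substantially more work than the argument it is meant to replace, and its genuinely new content (the explicit relation $DA_G=A_HD$ and the visibly positive matrix $D$) buys concreteness rather than economy.
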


\begin{lemma} \label{nToOneShiftLemma}
Suppose that $(\Sigma, \sigma)$ and $(\Sigma^{\prime}, \sigma)$ are shifts of finite type and $\eta: (\Sigma, \sigma) \rightarrow (\Sigma^{\prime}, \sigma)$ is a factor map, which is both s- and u-bijective and is $n$-to-one for some positive integer $n$. Then, $\eta^s\circ \eta^{s*}= n \cdot id_{D^s(\Sigma^{\prime}, \sigma)}$ and $\eta^u\circ \eta^{u*}= n \cdot id_{D^u(\Sigma^{\prime}, \sigma)}$.
\end{lemma}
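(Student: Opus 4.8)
The plan is to reduce the statement to a combinatorial computation with the vertex matrices of Propositions~\ref{leftcoveringMatrixMap} and \ref{rightcoveringMatrixMap}. First I would recode both shifts to edge shifts of graphs $G$ and $H$ so that $\eta$ becomes a $1$-block code, i.e.\ is induced by a single graph homomorphism $\theta\colon G \to H$ with vertex map $\theta_0$; this is always possible after passing to higher block presentations, and since the dimension groups and the induced maps $\eta^s,\eta^{s*}$ are conjugacy invariants, computing in this presentation is harmless. The key point is that for a $1$-block code the hypotheses translate into covering conditions on $\theta$: s-bijectivity of $\eta$ (bijectivity on each stable, i.e.\ right-tail, set) is equivalent to unique leftward lifting of paths and hence to $\theta$ being left-covering, while u-bijectivity is equivalent to unique rightward lifting and hence to $\theta$ being right-covering. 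Thus $\theta$ is simultaneously left- and right-covering, so both Proposition~\ref{leftcoveringMatrixMap} and Proposition~\ref{rightcoveringMatrixMap} apply to this single common presentation.

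With this in hand, $\eta^s\colon D^s(\Sigma_G,\sigma) \to D^s(\Sigma_H,\sigma)$ is multiplication by the matrix $D$ with $D_{IJ}=1$ iff $\theta_0(J)=I$, and $\eta^{s*}\colon D^s(\Sigma_H,\sigma) \to D^s(\Sigma_G,\sigma)$ is multiplication by the matrix $E$ with $E_{IJ}=1$ iff $\theta_0(I)=J$ (so that $E=D^t$ as $0$--$1$ matrices encoding $\theta_0$). Composing at the level of the inductive limits, $\eta^s\circ\eta^{s*}$ sends $[v,j]$ to $[DEv,j]$, where for $I,K\in H^0$ one has $(DE)_{IK}=\sum_{J\in G^0} D_{IJ}E_{JK}$, and the summand $D_{IJ}E_{JK}$ equals $1$ exactly when $\theta_0(J)=I=K$. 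Hence $DE$ is diagonal with $(DE)_{II}=|\theta_0^{-1}(I)|$.

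It then remains to show each fibre $\theta_0^{-1}(I)$ has exactly $n$ elements. Using the covering properties, a point of $\Sigma_H$ has a unique lift once its central edge is lifted (right-covering gives unique forward extension, left-covering unique backward extension), so the number of $\eta$-preimages of a point equals the number $|\theta^{-1}(e)|$ of edge-lifts of its central edge; since $\eta$ is $n$-to-one this forces $|\theta^{-1}(e)|=n$ for every edge $e$. Counting edge-lifts of $e$ according to their source vertex and using right-covering gives $|\theta^{-1}(e)|=|\theta_0^{-1}(i(e))|$, and as $H$ has no sinks every vertex occurs as some $i(e)$, whence $|\theta_0^{-1}(I)|=n$ for all $I\in H^0$. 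Therefore $DE = n\,\mathrm{Id}$ and $\eta^s\circ\eta^{s*}=n\cdot\id_{D^s(\Sigma',\sigma)}$. The statement for $D^u$ follows by the symmetric argument obtained by reversing all edges, which interchanges the roles of stable and unstable sets and of left- and right-covering maps (equivalently, replace $A_G$, $D$, $E$ by their transposes).

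The main obstacle I anticipate is the first step: justifying carefully that $\eta$ admits a single graph-homomorphism presentation that is at once left- and right-covering, and that the two matrix descriptions of $\eta^s$ and $\eta^{s*}$ may then be read off this common presentation, so that the product $DE$ genuinely acts on $D^s(\Sigma',\sigma)$. Once this bi-covering presentation is secured, the remainder is the routine fibre-count and matrix multiplication above.
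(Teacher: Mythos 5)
Your reduction hinges on the claim that, for a $1$-block code induced by a graph homomorphism $\theta$, s-bijectivity of $\eta$ is \emph{equivalent} to $\theta$ being left-covering and u-bijectivity is \emph{equivalent} to $\theta$ being right-covering. Only one direction of each is true (covering implies bijective, which is \cite[Theorem 2.5.17]{put}); the converse fails for a given presentation. Concretely, let $H$ be the graph with one vertex and two loops $a,b$, let $G$ be its two-block graph (vertices $\{a,b\}$, one edge $(x,y)$ from $x$ to $y$ for each ordered pair), and let $\Phi(x,y)=x$. The induced map $\Sigma_G\to\Sigma_H$ is a conjugacy, hence s- and u-bijective and $1$-to-one, and $(\theta_0,\Phi)$ is a left-covering graph homomorphism that is \emph{not} right-covering: both edges leaving the vertex $x$ are sent to the single loop $x$. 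So u-bijectivity does not give you right-covering, and the matrix formula you want for $\eta^{s*}$ (Proposition \ref{rightcoveringMatrixMap}) is simply not available in this presentation; the same example with $\Phi(x,y)=y$ kills the s-direction.

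Consequently, the step you yourself flag as ``the main obstacle'' is a genuine gap, not a technicality. What you need is that every s- and u-bijective, $n$-to-one factor map admits \emph{some} presentation by a single graph homomorphism that is simultaneously left- and right-covering. Passing to higher block presentations only makes $\eta$ a $1$-block code; as the example shows, it does not force either covering property, and recodings designed to force one covering property can destroy the other. The existence of a bi-covering presentation is a substantive recoding theorem (essentially Nasu's theorem on constant-to-one codes between irreducible shifts of finite type), which you would have to prove or cite; moreover the lemma carries no irreducibility hypothesis, and in the paper it is applied, via Proposition \ref{bothSUmap}, to the shifts $\Sigma_{L,M}(\pi)$, which need not be irreducible, so even that theorem would not cover the required generality. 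The paper's own proof avoids presentations entirely: it works with Putnam's description of $D^s$ by clopen subsets of stable sets, where \cite[Theorem 3.5.1]{put} gives $\eta^{s*}[E]=[F_1]+\cdots+[F_k]$ with the $F_i$ in pairwise distinct stable classes; s-bijectivity plus $n$-to-one-ness force $k=n$ and $\eta(F_i)=E$ for each $i$, whence $(\eta^s\circ\eta^{s*})[E]=n[E]$ by \cite[Theorem 3.4.1]{put}. Your diagonal computation $DE=n\,\mathrm{Id}$ and the fibre count $|\theta_0^{-1}(I)|=n$ are correct \emph{granted} a bi-covering presentation, but producing that presentation is the whole difficulty, and your proposal offers no valid argument for it.
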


\begin{proof}
We prove that $\eta^{s} \circ \eta^{s*}= n \cdot id_{D^s(\Sigma^{\prime}, \sigma)}$, the proof of the other equality is analogous. 

Let $e\in \Sigma^{\prime}$ and $E$ be a clopen set in $\Sigma^s(e)$. Then, \cite[Theorem 3.5.1]{put} implies that there is $k\ge 1$ and a family of subsets $\{ F_i \}_{i=1}^k$ such that 
\[
\eta^{-1}(E) = \bigcup_{i=1}^k F_i 
\]
and, for each $1\le i \neq j\le k$, no element in $F_i$ is stably equivalent to an element in $F_j$. Moreover, since $\eta$ is $n$-to-one
\[
\eta^{-1}(e)= \{ f_1, \ldots, f_n \},
\]
and since $\eta$ is s-bijective we also have $f_i \not\sim_s f_j$ for $i\neq j$. Since, for each $1\le i \le k$, all elements in $F_i$ are stably equivalent, we have that $k=n$ and, by possibly reordering, we can assume that $f_i \in F_i$ for each $i$.

Next, we show that $\eta(F_i)=E$ for each $i$. Fix $e^{\prime} \in E$, then using an argument similar to the one in previous paragraph, we obtain $f^{\prime}\in F_i$ such that $\eta(f^{\prime})=e^{\prime}$, and hence $\eta(F_i)=E$. Finally, using Theorems 3.4.1 and 3.5.1 in \cite{put}, we have that
\[ 
(\eta^s \circ \eta^{s*})([E]) = \eta^s ( [F_1] + \cdots + [F_n] ) = n \cdot [E]. \qedhere
\]
\end{proof}

If one assumes the Smale spaces in many of our results in this paper have certain recurrence properties, then the assumptions on the maps can be weakened. For example, we have the following result from \cite{DKWfunProPutHom}, which shows that the $n$-to-one hypothesis in the Lemma \ref{nToOneShiftLemma} is automatic if the shifts of finite type are irreducible.

\begin{thm}[{\cite[Theorem 3.3]{DKWfunProPutHom}}] \label{SUimpliesNtoOne}
Suppose $\pi: (Y, \psi) \to (X, \varphi)$ is a factor maps between irreducible Smale spaces which is both s- and u-bijective.  Then $\pi$ is $n$-to-$1$ for some positive integer $n$.
\end{thm}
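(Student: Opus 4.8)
The plan is to show that the fibre-cardinality function $x \mapsto |\pi^{-1}(x)|$ is constant on $X$. First note that, since $\pi$ is both s- and u-bijective, Theorem 2.5.3 of \cite{put} guarantees that $\pi$ is finite-to-one, so every fibre is a nonempty finite set and the function $f(x):=|\pi^{-1}(x)|$ takes values in the positive integers. Moreover, the intertwining relation $\pi \circ \psi = \varphi \circ \pi$ gives $\psi(\pi^{-1}(x)) = \pi^{-1}(\varphi(x))$; as $\psi$ is a bijection this yields $f(\varphi(x)) = f(x)$, so $f$ is $\varphi$-invariant. The two remaining tasks are to prove that $f$ is locally constant and then to invoke irreducibility.

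The crux, and the step I expect to be the main obstacle, is establishing that $\pi$ is a \emph{local homeomorphism}. Here I would combine the local product structure with the bracket-compatibility of maps (\cite[Theorem 2.3.2]{put}), which gives $\pi[a,b] = [\pi(a),\pi(b)]$ whenever the brackets are defined. Since $\pi$ is s-bijective, Putnam's structural analysis shows that its restriction to a local stable set $Y^s(y,\ep)$ is a homeomorphism onto an open subset of $X^s(\pi(y))$, hence onto a set containing some $X^s(\pi(y),\ep')$; since $\pi$ is u-bijective, the restriction to $Y^u(y,\ep)$ is likewise a homeomorphism onto a set containing some $X^u(\pi(y),\ep')$. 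A neighbourhood of $y$ is $[Y^s(y,\ep),Y^u(y,\ep)]$ and a neighbourhood of $\pi(y)$ is $[X^s(\pi(y),\ep'),X^u(\pi(y),\ep')]$, and bracket-compatibility identifies the action of $\pi$ on these coordinates with the product of the two stable and unstable homeomorphisms. Thus $\pi$ restricts to a homeomorphism from a neighbourhood of $y$ onto a neighbourhood of $\pi(y)$, so $\pi$ is a local homeomorphism. Some such local product argument is genuinely required: a finite-to-one closed surjection can fail to have locally constant fibre cardinality (for instance a fold map), so both the s- and the u-bijectivity must be used.

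Given that $\pi$ is a local homeomorphism which is finite-to-one and closed (being continuous between compact metric spaces), local constancy of $f$ follows by a standard argument: writing $\pi^{-1}(x_0) = \{y_1,\ldots,y_k\}$, choose disjoint open sets $U_i \ni y_i$ each mapped homeomorphically onto an open $W_i \ni x_0$; closedness of $\pi$ supplies an open $V \ni x_0$ with $\pi^{-1}(V) \subseteq \bigcup_i U_i$, and after shrinking $V$ into $\bigcap_i W_i$ every $x \in V$ has exactly one preimage in each $U_i$ and none outside, so $f \equiv k$ on $V$. Hence $f$ is locally constant, and its level sets $X_m := f^{-1}(\{m\})$ are open, pairwise disjoint, cover $X$, and are $\varphi$-invariant by the invariance of $f$. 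If two of them, say $X_m$ and $X_{m'}$ with $m \neq m'$, were both nonempty, then irreducibility of $(X,\varphi)$ would furnish $N>0$ with $\varphi^N(X_m) \cap X_{m'} \neq \varnothing$; but $\varphi^N(X_m) = X_m$ contradicts disjointness. Therefore exactly one level set is nonempty, $f$ equals the constant $n := |\pi^{-1}(x_0)|$, and $\pi$ is $n$-to-one.
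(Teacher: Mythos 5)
Your argument is correct, but there is nothing in this paper to compare it against: Theorem \ref{SUimpliesNtoOne} is stated here without proof, being imported from \cite[Theorem 3.3]{DKWfunProPutHom}. Your route --- local injectivity near each preimage from the two resolving hypotheses, local surjectivity onto neighbourhoods from the two bijectivity hypotheses, hence locally constant fibre cardinality, then $\varphi$-invariance of the count plus irreducibility --- is the natural argument and uses the same ingredients as the cited source, so it is not a genuinely different strategy.

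Two points should be tightened. The crux you identify is invoked with only a vague appeal to ``Putnam's structural analysis''; the precise fact needed is that for an s-bijective factor map $\pi(Y^s(y,\varepsilon))$ contains some $X^s(\pi(y),\varepsilon')$. This is part of the analysis of s-bijective maps in \cite[Section 2.5]{put}, and it can also be proved directly within your framework: if each $X^s(\pi(y),1/k)$ contained a point $x_k$ whose unique preimage $z_k \in Y^s(y)$ lay outside $Y^s(y,\varepsilon)$, pass to an ambient subsequential limit $z_k \to w$ and note $\pi(w)=\pi(y)$; for large $k$ the bracket $[z_k,w]$ is defined, lies in the local stable set of $z_k$ (hence is stably equivalent to $y$), and satisfies $\pi([z_k,w])=[x_k,\pi(y)]=\pi(y)$, so s-resolving forces $[z_k,w]=y$; letting $k\to\infty$ gives $w=y$ by continuity of the bracket, and then $[z_k,y]=y$ together with $d(z_k,y)\to 0$ places $z_k$ in $Y^s(y,\varepsilon)$ eventually, a contradiction. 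Second, a cosmetic slip: in Putnam's convention $[a,b]$ lies in the local stable set of $a$ and the local unstable set of $b$, so the product neighbourhood of $y$ is $\{[a,b] : a \in Y^u(y,\varepsilon),\ b \in Y^s(y,\varepsilon)\}$, with the unstable coordinate first; your $[Y^s(y,\varepsilon),Y^u(y,\varepsilon)]$ reverses the coordinates, though nothing in the argument changes once the order is corrected.
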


\subsection{Putnam's homology theory for Smale spaces} \label{putHomRev}

Putnam's homology theory generalizes Krieger's dimension groups to all Smale spaces possessing an s/u-bijective pair. We first briefly define Putnam's homology theory and then give the results required in the sequel.

The complexes required to define Putnam's homology theory are defined in \cite[Section 5.1]{put}. For completeness we give a heuristic description, starting with \cite[Definition 5.1.1]{put}. 
Suppose $(X,\varphi)$ is a Smale space possessing an s/u-bijective pair $\pi$. For each $L,M \geq 0$, let $C^s(\pi)_{L,M}=D^s(\Sigma_{L,M}(\pi),\sigma)$ and let $C^s(\pi)_{L,M}=0$ if $L<0$ or $M<0$. Let $d^s(\pi)_{L,M}:C^s(\pi)_{L,M} \to C^s(\pi)_{L-1,M} \oplus C^s(\pi)_{L,M+1}$ be the maps defined by
\[
d^s(\pi)_{L,M}=\sum_{0\leq l\leq L} (-1)^l \delta_{l,\,}^s + \sum_{0\leq m\leq M+1} (-1)^{L+m} \delta_{\,,m}^{s *}.
\]
An analogous definition holds for unstable sets, see \cite[Definition 5.1.1~(2)]{put}.

For $N\in \N$, we let $S_N$ denote the associated permutation group. For each $L,M \geq 0$ the group $S_{L+1} \times S_{M+1}$ acts on $\Sigma_{L,M}(\pi)$, and the action commutes with the dynamics. For $L,M \geq 0$, \cite[Definition 5.1.5 (4)]{put} defines a complex $D_{\QQ,\AA}^s(\Sigma_{L,M}(\pi))$ by taking subgroups, quotients, and images of $D^s(\Sigma_{L,M}(\pi))$ with respect to the action of $S_{L+1} \times S_{M+1}$. Then \cite[Definition 5.1.7]{put} defines $C_{\QQ,\AA}^s(\pi)_{L,M}=D_{\QQ,\AA}^s(\Sigma_{L,M}(\pi))$ and lets $d_{\QQ,\AA}^s(\pi)_{L,M}$ be the boundary map on $C_{\QQ,\AA}^s(\pi)_{L,M}$ induced from $d^s(\pi)_{L,M}$. The complex $C_{\QQ,\AA}^s(\pi)$ has only a finite number of nonzero terms, as shown in \cite[Theorem 5.1.10]{put}. We then have the following definition.

\begin{definition}[{\cite[Definition 5.1.11]{put}}]\label{HomDefn}
Suppose $\pi$ is an s/u-bijective pair for $(X,\varphi)$. Then $H^s_*(\pi)$ is the homology of the double complex $(C_{\QQ,\AA}^s(\pi),d_{\QQ,\AA}^s(\pi))$ given by
\[
H^s_N(\pi):=\ker\Big(\bigoplus_{L-M=N} d_{\QQ,\AA}^s(\pi)_{L,M}\Big) \Big/ \im\Big(\bigoplus_{L-M=N+1} d_{\QQ,\AA}^s(\pi)_{L,M}\Big).
\]
The homology groups $H^u_*(\pi)$ are defined analogously.
\end{definition}

Putnam shows in \cite[Theorem 5.5.1]{put} that $H^s_*$ is independent of the s/u-bijective pair $\pi$, and hence defines a homology theory for $(X,\varphi)$ denoted $H^s_*(X,\varphi)$. 

Suppose there exists an s/u-bijective pair for $(X,\varphi)$, then the functor that associates a sequence of abelian groups $H^s_*(X,\varphi)$ to $(X,\varphi)$ is covariant for s-bijective factor maps and contravariant for u-bijective factor maps (in the same way as in Section \ref{DimensionGroups}). On the other hand the functor that associates a sequence of abelian groups $H^u_*(X,\varphi)$ to $(X,\varphi)$ is contravariant for s-bijective factor maps and covariant for u-bijective factor maps (in the same way as Section \ref{DimensionGroups}). For further details see \cite[Section 5.4]{put}. The notation for these functors is given in the next definition.

\begin{definition}
Let $(X,\varphi)$ and $(Y, \psi)$ be Smale spaces which each have an s/u-bijective pair. Also let $\pi: (X,\varphi) \rightarrow (Y, \psi)$ be a factor map. Then
\begin{enumerate}
\item If $\pi$ is s-bijective, then the induced maps on homology are denoted by $\pi^s: H^s(X,\varphi) \rightarrow H^s(Y, \psi)$ and $\pi^{u*}: H^u(Y, \psi ) \rightarrow H^u(X,\varphi)$.
\item If $\pi$ is u-bijective, then the induced maps on homology are denoted by $\pi^u: H^u(X,\varphi) \rightarrow H^u(Y, \psi)$ and $\pi^{s*}: H^s(Y, \psi ) \rightarrow H^s(X,\varphi)$.
\end{enumerate}
\end{definition}

\begin{prop} \label{bothSUmap}
Let $(X,\varphi)$ and $(X^{\prime}, \varphi^{\prime})$ be Smale spaces. If $\eta: (X,\varphi) \rightarrow (X^{\prime}, \varphi^{\prime})$ is both s- and u-bijective and is $n$-to-one for some positive integer $n$. Then, $\eta^{s} \circ \eta^{s*}=n \cdot id_{H^s(X^{\prime},\varphi^{\prime})}$ and $\eta^{u} \circ \eta^{u*} = n \cdot id_{H^u(X^{\prime}, \varphi^{\prime})}$. 
\end{prop}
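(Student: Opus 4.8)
The plan is to reduce the statement to its shift of finite type analogue, Lemma~\ref{nToOneShiftLemma}, by working levelwise in the double complex that computes $H^s_*$. The essential point is that the functorial maps $\eta^s$ and $\eta^{s*}$ are assembled from maps of dimension groups $D^s(\Sigma_{L,M})$ induced by honest factor maps between the shifts of finite type appearing in \emph{compatible} s/u-bijective pairs for $(X,\varphi)$ and $(X',\varphi')$; a scalar relation established at that level is central and therefore descends through symmetrization and homology.

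First I would fix an s/u-bijective pair $\pi' = (Y',\psi',\pi'_s,Z',\zeta',\pi'_u)$ for $(X',\varphi')$ and build a compatible pair $\pi$ for $(X,\varphi)$ by fibre product over $\eta$: set $Y := \fib{Y'}{\pi'_s}{\eta}{X}$ and $Z := \fib{Z'}{\pi'_u}{\eta}{X}$, with $\pi_s,\pi_u$ the projections onto $X$. Using \cite[Theorem 2.5.13]{put}, the hypothesis that $\eta$ is \emph{both} s- and u-bijective is exactly what is needed to check that $\pi$ is a genuine s/u-bijective pair in the sense of Definition~\ref{s/u-bijective pair}: the maps $\pi_s$ and $\pi_u$ are s- and u-bijective from the corresponding properties of $\pi'_s,\pi'_u$, while the projections $Y \to Y'$ and $Z \to Z'$ are u-bijective and s-bijective respectively (here one uses that $\eta$ is u-, resp.\ s-, bijective), which forces the unstable sets $Y^u(y)$ and stable sets $Z^s(z)$ to be totally disconnected, since they map continuously and bijectively onto the totally disconnected unstable sets of $Y'$, respectively stable sets of $Z'$.

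Next I would identify the shifts of finite type of $\pi$ with those of $\pi'$. Unwinding the definition of $\Sigma_{L,M}(\pi)$, the common-value condition forces all $X$-coordinates to coincide, giving a homeomorphism $\Sigma_{L,M}(\pi) \cong \Sigma_{L,M}(\pi') \times_{X'} X$ (fibre product over the common-value map and $\eta$), under which the natural lift of $\eta$ is the projection $\eta_{L,M}\colon \Sigma_{L,M}(\pi) \to \Sigma_{L,M}(\pi')$. This $\eta_{L,M}$ is again both s- and u-bijective by \cite[Theorem 2.5.13]{put}, and it is exactly $n$-to-one, since its fibre over a point $w$ is $\eta^{-1}$ of the common value of $w$. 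Lemma~\ref{nToOneShiftLemma} then applies at every level, giving $(\eta_{L,M})^s \circ (\eta_{L,M})^{s*} = n \cdot \id$ on $D^s(\Sigma_{L,M}(\pi'))$.

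Finally, since $\eta^s$ and $\eta^{s*}$ are by construction induced from the levelwise maps $(\eta_{L,M})^s$ and $(\eta_{L,M})^{s*}$, which are compatible with the face maps $\delta_{l,\,},\delta_{\,,m}$ and with the $S_{L+1}\times S_{M+1}$-action used to form $C^s_{\QQ,\AA}(\pi')$, the composite $\eta^s\circ\eta^{s*}$ is, at each bidegree, the map induced by $n\cdot\id$. Multiplication by the central scalar $n$ survives passage to the symmetrized subquotient complex and to homology, yielding $\eta^s\circ\eta^{s*} = n\cdot\id_{H^s(X',\varphi')}$; the unstable identity follows by the identical argument with the roles of $Y$ and $Z$ interchanged. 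I expect the main obstacle to be the second step, namely checking that the fibre-product pair $\pi$ really satisfies the total-disconnectedness axioms of Definition~\ref{s/u-bijective pair}, together with confirming that Putnam's functorial maps coincide with the levelwise dimension-group maps used here; once these are in place, the scalar $n$ propagates formally.
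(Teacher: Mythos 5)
Your proposal is correct and follows essentially the same route as the paper's proof: build a compatible s/u-bijective pair for $(X,\varphi)$ by fibre product over $\eta$ (the paper cites \cite[Theorem 5.4.2]{put} for this, where you re-derive it), identify the induced maps $\eta_{\Sigma_{L,M}(\pi)}\colon \Sigma_{L,M}(\pi) \to \Sigma_{L,M}(\pi')$ as s-bijective, u-bijective and $n$-to-one, apply Lemma~\ref{nToOneShiftLemma} levelwise on the dimension groups, and let the scalar $n$ descend to homology. The only cosmetic difference is that you spell out the total-disconnectedness check and the identification $\Sigma_{L,M}(\pi) \cong \Sigma_{L,M}(\pi') \times_{X'} X$ explicitly, which the paper handles by citation.
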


Again, we note that when the Smale spaces are irreducible, the assumption that the map is $n$-to-one can be removed, since it is automatic in this case, see Theorem \ref{SUimpliesNtoOne} (which is \cite[Theorem 3.3]{DKWfunProPutHom}).

\begin{proof}
We prove that $\eta^{s} \circ \eta^{s*}= n \cdot id$, the proof of the other equality is similar. 

Let $\pi^{\prime}=(Y^{\prime}, \psi^{\prime}, \pi^{\prime}_s, Z^{\prime}, \zeta^{\prime}, \pi^{\prime}_u)$ be an s/u-bijective pair for $(X^{\prime}, \varphi^{\prime})$. Following \cite[Theorem 5.4.2]{put}, we have that 
\begin{align*}
(Y, \psi) & :=\fib{(X, \varphi)}{\eta}{\pi^{\prime}_s}{(Y^{\prime}, \psi^{\prime})} \\
(Z, \zeta) & := \fib{(X, \varphi)}{\eta}{\pi^{\prime}_u}{(Z^{\prime}, \zeta^{\prime})} 
\end{align*}
such that $\pi_s:=p_1: \fib{(X, \varphi)}{\eta}{\pi^{\prime}_s}{(Y^{\prime}, \psi^{\prime})} \rightarrow (X,\varphi)$ and $\pi_u:=p_1: \fib{(X, \varphi)}{\eta}{\pi^{\prime}_u}{(Z^{\prime}, \zeta^{\prime})} \rightarrow (X,\varphi)$ are an s/u-bijective pair for $(X,\varphi)$; we denote it by $\pi$. Moreover, we have a commutative diagram
\[
\begin{CD}
(Y,\psi) @>>\pi_s> (X,\varphi) @<<\pi_u< (Z,\zeta)\\
@VV \eta_Y V @VV \eta_X V @VV \eta_Z V\\
(Y',\psi') @>> \pi'_s > (X',\varphi') @<<\pi'_u< (Z',\zeta')\\
\end{CD}
\]
where $\eta_Y:=p_2:\fib{(X, \varphi)}{\eta}{\pi^{\prime}_s}{(Y^{\prime}, \psi^{\prime})} \rightarrow (Y^{\prime}, \psi^{\prime})$ and $\eta_Z:=p_2: \fib{(X, \varphi)}{\eta}{\pi^{\prime}_u}{(Z^{\prime}, \zeta^{\prime})} \rightarrow (Z^{\prime}, \zeta^{\prime})$.

It follows from standard properties of fibre products (compare with \cite[Proposition 8.3.3]{LM}) that $\eta_Y$ and $\eta_Z$ are s-bijective, u-bijective and $n$-to-one. For each $L\ge 0$ and $M\ge 0$, in \cite[Section 5.4]{put} Putnam defines a map
\[
\eta_{\Sigma_{L,M}(\pi)}: \Sigma_{L,M}(\pi) \rightarrow \Sigma_{L,M}(\pi^{\prime}).
\]
Again, using properties of fibre products it follows that $\eta_{\Sigma_{L,M}(\pi)}$ is also s-bijective, u-bijective, and $n$-to-one.

Lemma \ref{nToOneShiftLemma} implies that, for each $L\ge 0$ and $M\ge 0$, the map 
\[ 
(\eta_{\Sigma_{L,M}(\pi)})^s \circ  (\eta_{\Sigma_{L,M}(\pi)})^{s*}:D^s(\Sigma_{L,M}(\pi^{\prime})) \rightarrow D^s(\Sigma_{L,M}(\pi^{\prime}))  
\]
is equal to $n \cdot id_{D^s(\Sigma_{L,M}(\pi))}$. Since the map on homology is induced from this map, it follows that $\eta^s \circ \eta^{s*}=n \cdot id_{H^s(X^{\prime},\varphi^{\prime})}$.
\end{proof}
\begin{prop} Putnam's homology theory has the following basic invariance properties:
\begin{enumerate}
\item $(H^s_*(X,\varphi), (\varphi^{-1})^s)$ and $(H^u_*(X,\varphi), \varphi^u)$ are conjugacy invariants. 
\item For any $l\ge 1$, $H^s_*(X,\varphi) \cong H^s_*(X, \varphi^l)$ and $H^u_*(X,\varphi)\cong H^u_*(X,\varphi^l)$, and these isomorphisms are natural under the identifications $(\varphi^{l})^s=(\varphi^{s})^l$ and $(\varphi^{l})^u=(\varphi^{u})^l$. 
\end{enumerate}
\end{prop}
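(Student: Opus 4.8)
The plan is to reduce both statements to two facts already recorded in Section~\ref{putHomRev}: the functoriality of $H^s_*$ (covariant for s-bijective, contravariant for u-bijective factor maps) and of $H^u_*$ (the reverse variances), together with the independence of $H^s_*$ and $H^u_*$ from the chosen s/u-bijective pair (\cite[Theorem 5.5.1]{put}). Throughout I assume the Smale spaces are such that the homology is defined, e.g. non-wandering.

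For part (1), the first observation is that a conjugacy $h\colon(X,\varphi)\to(X',\varphi')$, being a homeomorphism with $h\circ\varphi=\varphi'\circ h$, carries each stable set bijectively onto a stable set and each unstable set bijectively onto an unstable set; hence $h$ is simultaneously s-bijective and u-bijective, as are $h^{-1}$ and the self-conjugacy $\varphi\colon(X,\varphi)\to(X,\varphi)$. Functoriality then produces $h^s$ and $h^u$, and since $(h^{-1})^s\circ h^s=(h^{-1}\circ h)^s=\id$, with the analogous identity on the unstable side, both are isomorphisms. To see that the pairs are conjugacy invariants I would apply the functor to the relation $h\circ\varphi^{-1}=\varphi'^{-1}\circ h$: covariance of $H^s_*$ for s-bijective maps gives $h^s\circ(\varphi^{-1})^s=(h\circ\varphi^{-1})^s=(\varphi'^{-1}\circ h)^s=(\varphi'^{-1})^s\circ h^s$, so $h^s$ intertwines $(\varphi^{-1})^s$ with $(\varphi'^{-1})^s$, and covariance of $H^u_*$ for u-bijective maps gives $h^u\circ\varphi^u=\varphi'^u\circ h^u$ in the same way. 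This is exactly the claim that $(H^s_*(X,\varphi),(\varphi^{-1})^s)$ and $(H^u_*(X,\varphi),\varphi^u)$ depend only on the conjugacy class.

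For part (2), I would begin by noting that $(X,\varphi^l)$ is again a Smale space with the \emph{same} bracket map, and that its stable and unstable equivalence relations coincide with those of $(X,\varphi)$; consequently an s/u-bijective pair $\pi=(Y,\psi,\pi_s,Z,\zeta,\pi_u)$ for $(X,\varphi)$ yields an s/u-bijective pair $(Y,\psi^l,\pi_s,Z,\zeta^l,\pi_u)$ for $(X,\varphi^l)$, since s- and u-bijectivity of $\pi_s,\pi_u$ are conditions on these unchanged equivalence relations. Passing to the defining shifts of finite type, the space $\Sigma_{L,M}(\pi)$ is literally unchanged and only the shift $\sigma$ is replaced by $\sigma^l$, while the face maps $\delta_{l,\,}$ and $\delta_{\,,m}$ are the same coordinate-deletion maps in both pictures. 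The heart of the argument is then zero-dimensional: writing $\Sigma_{L,M}(\pi)=\Sigma_G$ with adjacency matrix $A_G$, one has $D^s(\Sigma_G,\sigma^l)=\varinjlim(\Z^{|G^0|}\xrightarrow{A_G^l}\Z^{|G^0|}\xrightarrow{A_G^l}\cdots)$, and the evident map into it from $D^s(\Sigma_G,\sigma)=\varinjlim(\Z^{|G^0|}\xrightarrow{A_G}\cdots)$ is an order isomorphism, because the $A_G^l$-system is a cofinal subsystem of the $A_G$-system and hence has the same inductive limit. Under this identification the induced maps of the unchanged deletion maps agree, since by Propositions~\ref{leftcoveringMatrixMap} and \ref{rightcoveringMatrixMap} the relevant maps $\delta^s$ and $\delta^{s*}$ are multiplication by the $0$--$1$ matrices $D$ and $E$, which depend only on the combinatorics of the maps and not on the power of the shift.

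Assembling these spotwise order isomorphisms, I would check that they commute with the boundary maps $d^s(\pi)_{L,M}$ and are equivariant for the $S_{L+1}\times S_{M+1}$ actions, so that they descend to an isomorphism of the symmetrized double complexes $C^s_{\QQ,\AA}(\pi)$ attached to $(X,\varphi)$ and to $(X,\varphi^l)$, whence $H^s_*(X,\varphi)\cong H^s_*(X,\varphi^l)$ via Definition~\ref{HomDefn}; the unstable statement is identical, using $A_G^t$. For the naturality claim, replacing $\sigma$ by $\sigma^l$ replaces the induced dimension-group automorphism by its $l$-th power (in the inductive-limit picture the shift automorphism is multiplication by $A_G$, so $\sigma^l$ induces $A_G^l=(A_G)^l$), which gives $(\varphi^l)^s=(\varphi^s)^l$ and dually $(\varphi^l)^u=(\varphi^u)^l$. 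The main obstacle I anticipate is not any single isomorphism but the bookkeeping in this last step: verifying that the cofinality isomorphisms are natural across all $L,M$ simultaneously---compatible with both families of face maps and with the symmetrization used to form $C^s_{\QQ,\AA}$---so that they genuinely assemble into a morphism of double complexes rather than merely a spotwise isomorphism.
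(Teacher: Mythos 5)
Your proposal is correct; note, however, that the paper offers no proof of this proposition at all --- it is stated as a collection of basic facts, and the citation to \cite[Sections 5.4 and 5.5]{put} in the surrounding text is attached to the \emph{next} proposition --- so what you have written fills a genuine omission rather than paralleling an argument in the text. Your part (1) is surely the intended argument: a conjugacy $h$ is simultaneously s- and u-bijective, functoriality ($\id^s=\id$ and $(g\circ f)^s=g^s\circ f^s$, \cite[Theorem 5.4.1]{put}) makes $h^s$ and $h^u$ isomorphisms, and applying the functors to $h\circ\varphi^{-1}=(\varphi')^{-1}\circ h$ gives the intertwining. Your part (2) is also essentially right, and its key steps check out: $(X,\varphi^l)$ has the same bracket and, by uniform continuity of $\varphi,\dots,\varphi^{l-1}$, the same stable and unstable equivalence relations; hence an s/u-bijective pair for $(X,\varphi)$ powers up to one for $(X,\varphi^l)$ with the spaces $\Sigma_{L,M}(\pi)$, the deletion maps, and the symmetric group actions literally unchanged; and $D^s(\Sigma_G,\sigma^l)\cong D^s(\Sigma_G,\sigma)$ by cofinality of the $A_G^l$-subsystem, with $\sigma^l$ inducing the $l$-th power of the shift automorphism, which is exactly the naturality claim. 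The one place your citations are too weak is the claim that the induced maps of the deletion maps agree under these identifications ``by Propositions~\ref{leftcoveringMatrixMap} and \ref{rightcoveringMatrixMap}'': those propositions only cover maps presented by left/right-covering graph homomorphisms, whereas the deletion maps are general s-/u-bijective factor maps of shifts of finite type, so one must instead invoke the general construction of $\delta^s$ and $\delta^{s*}$ in \cite[Theorems 3.4.1--3.4.4 and 3.5.1]{put}, or recode so that the deletion maps become covering maps and check that the recoding is compatible with passing to $l$-th powers (the vertex data, and hence the matrices, are unchanged). This is routine and is precisely the bookkeeping you flag in your final sentence, so I would record it as a fillable incompleteness you have correctly located, not as a gap.
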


We now have the following proposition, whose proof follows directly from the results in \cite[Sections 5.4 and 5.5]{put}.

\begin{prop}
Let $(X,\varphi)$ and $(Y,\psi)$ be nonwandering Smale spaces and $\pi: (X,\varphi) \to (Y,\psi) $ be a factor map. Then
\begin{enumerate}
\item If $\pi$ is s-bijective, then the induced maps $\pi^s: H^s(X,\varphi) \rightarrow H^s(Y,\psi)$ and $\pi^{u*}:H^u(Y,\psi) \rightarrow H^u(X,\varphi)$ intertwine the automorphisms induced from $\varphi$ and $\psi$.
\item If $\pi$ is u-bijective, then the induced maps $\pi^u: H^u(X, \varphi) \rightarrow H^u(Y, \psi)$ and $\pi^{s*}:H^s(Y,\psi) \rightarrow H^s(X,\varphi)$ intertwine the automorphisms induced from $\varphi$ and $\psi$.
\end{enumerate}
\end{prop}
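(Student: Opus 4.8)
The plan is to reduce the statement to its already-established analogue for shifts of finite type, namely Proposition \ref{ResOrdAndAutSFT}, by tracking the intertwining relation through each stage of Putnam's construction of the induced maps on homology. Since the whole homology theory is assembled from dimension groups of the shifts of finite type $\Sigma_{L,M}(\pi)$, and Proposition \ref{ResOrdAndAutSFT} already records the intertwining at that level, the only thing to do is verify that it survives the passage to the double complex, to its $S_{L+1}\times S_{M+1}$-(anti)sym\-metrization, and finally to homology.

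First I would recall how $\pi^s$ is defined when $\pi$ is s-bijective. Following \cite[Section 5.4]{put}, one chooses s/u-bijective pairs $\pi_X$ and $\pi_Y$ for $(X,\varphi)$ and $(Y,\psi)$ compatible with $\pi$ (produced via fibre products, exactly as in the proof of Proposition \ref{bothSUmap}), yielding for each $L,M\ge 0$ a factor map $\pi_{\Sigma_{L,M}}\colon \Sigma_{L,M}(\pi_X)\to\Sigma_{L,M}(\pi_Y)$ of shifts of finite type which is s-bijective. Applying the dimension-group functor gives chain maps $(\pi_{\Sigma_{L,M}})^s\colon D^s(\Sigma_{L,M}(\pi_X))\to D^s(\Sigma_{L,M}(\pi_Y))$ commuting with the boundary maps, and after passing to the subquotients of Definition \ref{HomDefn} they induce $\pi^s$ on $H^s_*$; the map $\pi^{u*}$ arises identically from the contravariant unstable functoriality. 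The homology automorphism associated to $\varphi$ is realized at the chain level by the shift $\sigma$ acting on each $\Sigma_{L,M}(\pi_X)$, that is, by $(\sigma)^s$ on each $D^s(\Sigma_{L,M}(\pi_X))$, and the automorphism associated to $\psi$ by $(\sigma)^s$ on each $D^s(\Sigma_{L,M}(\pi_Y))$. Because $\pi_{\Sigma_{L,M}}$ is a map of Smale spaces it satisfies $\pi_{\Sigma_{L,M}}\circ\sigma=\sigma\circ\pi_{\Sigma_{L,M}}$, so Proposition \ref{ResOrdAndAutSFT}(1) applies verbatim in each bidegree: the s-bijective-induced map $(\pi_{\Sigma_{L,M}})^s$ intertwines the automorphisms induced from the two shift maps. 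This is exactly the desired relation, but at the chain level.

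It remains to check that this chain-level intertwining descends. Both the chain maps $(\pi_{\Sigma_{L,M}})^s$ and the shift-induced automorphisms are equivariant for the $S_{L+1}\times S_{M+1}$-action used to form $C^s_{\QQ,\AA}(\pi)$, as is built into Putnam's construction in \cite[Sections 5.4 and 5.5]{put}; hence they pass to the subquotient complexes, their commutation relation passes with them, and taking homology yields $\pi^s\circ\varphi^s=\psi^s\circ\pi^s$ together with the analogous identity for $\pi^{u*}$. Part (2) is obtained by interchanging the roles of stable and unstable and invoking Proposition \ref{ResOrdAndAutSFT}(2). The only genuine point, as opposed to routine bookkeeping with the double complex, is confirming that all the relevant maps are honestly $S_{L+1}\times S_{M+1}$-equivariant so that the intertwining descends to the subquotients; the dynamical content is entirely encapsulated in the shift-of-finite-type statement of Proposition \ref{ResOrdAndAutSFT}, so I expect no obstacle beyond that verification.
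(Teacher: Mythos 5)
Your argument is correct, and it draws on the same source the paper does: the paper offers no proof beyond the remark that the statement ``follows directly from the results in \cite[Sections 5.4 and 5.5]{put}''. The difference is one of economy. The intended direct argument never leaves the homology level: $\varphi$ and $\psi$ are self-conjugacies, hence s-bijective (and u-bijective) factor maps, and the automorphisms ``induced from $\varphi$ and $\psi$'' are precisely the maps these self-conjugacies induce; since $\pi \circ \varphi = \psi \circ \pi$, Putnam's composition law for the covariant stable functor gives
\[
\pi^s \circ \varphi^s = (\pi \circ \varphi)^s = (\psi \circ \pi)^s = \psi^s \circ \pi^s,
\]
while the contravariant law $(\alpha \circ \beta)^{u*} = \beta^{u*} \circ \alpha^{u*}$ gives $\varphi^{u*} \circ \pi^{u*} = \pi^{u*} \circ \psi^{u*}$, which is the assertion for $\pi^{u*}$ (inverting if one prefers the automorphisms $\varphi^u$, $\psi^u$, since $\varphi^{u*} = (\varphi^u)^{-1}$ by Proposition \ref{bothSUmap} with $n=1$); part (2) is symmetric. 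Your proof instead re-derives this special case of the composition law at the chain level, and each step is sound: compatible s/u-bijective pairs and s-bijective maps $\pi_{\Sigma_{L,M}}$ exist, Proposition \ref{ResOrdAndAutSFT} applies in each bidegree, and the $S_{L+1} \times S_{M+1}$-equivariance needed for descent is built into Putnam's construction. What your route buys is an explicit identification of the canonical automorphisms with the chain-level shift maps $(\sigma_{L,M})^s$, which the functorial argument takes as the definition; what it costs is re-verifying machinery already packaged in \cite[Theorems 5.4.1 and 5.5.1]{put}. One small imprecision: the compatible pairs are supplied by \cite[Theorem 5.4.2]{put}, not ``exactly as in the proof of Proposition \ref{bothSUmap}'' --- that pullback construction uses that the map $\eta$ there is both s- and u-bijective, whereas for a map that is merely s-bijective the s-leg of the pulled-back pair needs an extra fibre product to keep its unstable sets totally disconnected. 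This does not affect the validity of your argument, since all you need is the existence of compatible pairs, which the cited theorem provides.
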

\begin{remark}
At present the definition of an order on $H^s_0(X,\varphi)$ respectively $H^u_0(Y,\varphi)$ has not be defined. However, it is likely that such orders exist, \cite[Section 8.1]{put}. If the maps induced from s- and u-bijective maps respect these orderings, then ``ordered" can be added to the previous theorem in the same manner as in Proposition \ref{ResOrdAndAutSFT}.
\end{remark}

\subsection{Almost one-to-one factor maps and ${\rm Per}(X, \varphi)$}
\begin{definition}
Suppose $(X, \varphi)$ is a Smale space, $(Y, \psi)$ is an irreducible Smale space, and $\pi : (X, \varphi) \rightarrow (Y, \psi)$ is a finite-to-one factor map. Then 
\[
{\rm deg}(\pi):= {\rm min}\{ {\rm card}\left( \pi^{-1}\{ y \} \right) \: | \: y \in Y \}
\]
where ${\rm card}(A)$ denotes the cardinality of the set $A$. Moreover, $\pi$ is called \emph{almost one-to-one} if ${\rm deg}(\pi)=1$.
\end{definition}
\begin{definition}
Suppose $(X,\varphi)$ is an irreducible Smale space. Then 
\[ {\rm Per}(X,\varphi):={\rm gcd}\{ n \in \N \: | \: {\rm per}_n(X,\varphi)\neq \varnothing  \}
\] 
where ${\rm per}_n(X,\varphi)$ denotes the set of periodic points of period $n$ in $(X,\varphi)$.
\end{definition}

\begin{lemma} \label{perLemma}
Suppose that $\pi:(X,\varphi) \rightarrow (Y,\psi)$ is an almost one-to-one factor map between irreducible Smale spaces. Then, ${\rm Per}(X, \varphi)={\rm Per}(Y, \psi)$.
\end{lemma}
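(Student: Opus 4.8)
The plan is to reduce everything to the cyclic (spectral) decompositions of the two irreducible Smale spaces and to show that $\pi$ carries one decomposition onto the other compatibly with the dynamics. Write $p_X={\rm Per}(X,\varphi)$ and $p_Y={\rm Per}(Y,\psi)$. Recall that an irreducible Smale space admits a spectral decomposition into finitely many clopen pieces $X=\bigsqcup_{i\in\Z/p_X}X_i$ that are cyclically permuted by $\varphi$ (so $\varphi(X_i)=X_{i+1}$) and on which $\varphi^{p_X}$ acts as a mixing homeomorphism, the number of pieces being exactly $p_X$; decompose $Y=\bigsqcup_{j\in\Z/p_Y}Y_j$ in the same way. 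The goal is then to prove $p_X=p_Y$.

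First I would show that $\pi$ respects these decompositions. Put $\ell=\operatorname{lcm}(p_X,p_Y)$, so that $\varphi^{\ell}$ preserves each $X_i$; and since $\{\pi^{-1}(Y_j)\}_j$ is a clopen partition of $X$ cyclically permuted by $\varphi$ (because $\varphi\,\pi^{-1}(Y_j)=\pi^{-1}(\psi(Y_j))=\pi^{-1}(Y_{j+1})$ and $p_Y\mid\ell$), the map $\varphi^{\ell}$ also preserves each $\pi^{-1}(Y_j)$. On $X_i$ the homeomorphism $\varphi^{\ell}$ is a power of the mixing map $\varphi^{p_X}|_{X_i}$, hence is itself mixing and in particular topologically transitive; a transitive system has no nontrivial clopen invariant subset, so exactly one of the clopen sets $X_i\cap\pi^{-1}(Y_j)$ is nonempty. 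Thus each $X_i$ is mapped by $\pi$ into a single component $Y_{\rho(i)}$, and $\pi\varphi=\psi\pi$ forces $\rho(i+1)=\rho(i)+1\bmod p_Y$. Hence $\rho(i)=i+c$ for some constant $c$; well-definedness on $\Z/p_X$ forces $p_Y\mid p_X$, and (since $\pi$ is onto) $\rho$ is the resulting surjection $\Z/p_X\to\Z/p_Y$, whose fibres each have $p_X/p_Y$ elements.

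Next I would show that each component maps onto a full component, i.e. $\pi(X_i)=Y_{\rho(i)}$. Fix $j$; the closed sets $\pi(X_i)$ with $\rho(i)=j$ have union $Y_j$ and are cyclically permuted by $\psi^{p_Y}$ (as $\psi^{p_Y}\pi(X_i)=\pi(X_{i+p_Y})$). The map $\psi^{p_X}=(\psi^{p_Y}|_{Y_j})^{p_X/p_Y}$ is a power of a mixing map, hence mixing, and fixes each $\pi(X_i)$ (because $\varphi^{p_X}(X_i)=X_i$); choosing a $\psi^{p_X}$-transitive point of $Y_j$ shows that one of these closed invariant sets is all of $Y_j$, after which the cyclic permutation by $\psi^{p_Y}$ forces every $\pi(X_i)$ with $\rho(i)=j$ to equal $Y_j$.

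The conclusion is then immediate. For any $y\in Y$, say $y\in Y_j$, each of the $p_X/p_Y$ components $X_i$ with $\rho(i)=j$ contains a preimage of $y$ (because $\pi(X_i)=Y_j\ni y$), and these are distinct since the $X_i$ are disjoint; hence every fibre of $\pi$ has at least $p_X/p_Y$ points, so ${\rm deg}(\pi)\ge p_X/p_Y$. As $\pi$ is almost one-to-one, ${\rm deg}(\pi)=1$, forcing $p_X/p_Y=1$, i.e. ${\rm Per}(X,\varphi)={\rm Per}(Y,\psi)$. I expect the main obstacle to be the bookkeeping in the first two steps — verifying that $\pi$ genuinely intertwines the two cyclic decompositions and that each piece surjects — rather than the final counting argument; the essential inputs there are that powers of mixing maps remain mixing and that transitive maps admit no nontrivial clopen invariant subsets. (The easy divisibility $p_Y\mid p_X$ can alternatively be seen directly: a period-$n$ point of $X$ maps to a periodic point of $Y$ whose period divides $n$ and is divisible by $p_Y$, so $p_Y\mid n$ for every period $n$ occurring in $X$.)
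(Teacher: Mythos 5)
Your proof is correct, and its core mechanism is genuinely different from the paper's. Both arguments start from Smale's spectral decomposition into ${\rm Per}$-many clopen pieces on which the appropriate power of the map is mixing, and both first show that $\pi$ sends each piece $X_i$ into a single piece $Y_{\rho(i)}$ --- you do this via the purely topological fact that a transitive system has no proper nonempty clopen invariant subset, applied to $\varphi^{{\rm lcm}(p_X,p_Y)}$, while the paper uses density of stable sets in a mixing component together with closedness of $Y_j$; both are sound. After that the routes diverge. The paper proves $p_Y \mid p_X$ by a periodic-point/pigeonhole argument, then argues by contradiction: assuming $q=p_X/p_Y>1$, it produces an open subset of $Y_j$ all of whose points have at least two preimages, contradicting the density of uniquely-covered points; this forces a two-case analysis according to whether $\pi(X_i)=Y_j$ or not, the second case being a delicate compactness-plus-mixing construction. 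You instead obtain $p_Y\mid p_X$ for free from the equivariance relation $\rho(i+1)=\rho(i)+1$ for the induced map $\Z/p_X\to\Z/p_Y$, and then prove the stronger statement that $\pi(X_i)=Y_{\rho(i)}$ for \emph{every} $i$: a transitive point of the mixing map $\psi^{p_X}|_{Y_j}$ lies in one of the finitely many closed $\psi^{p_X}$-invariant sets $\pi(X_i)$ covering $Y_j$, forcing that set to equal $Y_j$, and the cyclic action of $\psi^{p_Y}$ spreads this to the remaining ones. This collapses the paper's case analysis entirely (its harder second case is exactly the situation you rule out), and the conclusion becomes a direct count rather than a contradiction hunt: every fibre has at least $p_X/p_Y$ points, one in each component over $Y_j$, so ${\rm deg}(\pi)\ge p_X/p_Y$, incompatible with ${\rm deg}(\pi)=1$ unless $p_X=p_Y$. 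What you pay is an appeal to Baire category (existence of transitive points for mixing systems on compact metric spaces), which is standard and no heavier than the paper's assertion that uniquely-covered points are dense --- a fact your argument never needs; what you gain, besides brevity, is the stronger by-product that any such equivariant factor map satisfies ${\rm deg}(\pi)\ge {\rm Per}(X,\varphi)/{\rm Per}(Y,\psi)$.
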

\begin{proof}
To begin, we show that ${\rm Per}(Y, \psi)$ divides ${\rm Per}(X, \varphi)$. To this end, suppose that $y \in Y$ is a periodic point of least period $l$. Since $\pi$ is onto, there exists $x \in X$ such $\pi(x)=y$. Hence, for any $i \in \mathbb{N}$,
\[ 
\pi( \varphi^{il}( x))= \psi^{il}(\pi(x))=\psi^{il}(y)=y.
\]
Since $\pi$ is finite-to-one, the pigeonhole principle implies that $x$ is periodic. Furthermore, if $k$ is the least period of $x$, then 
\[
\psi^k(y)=\psi^k(\pi(x))=\pi(\varphi^k(x))=\pi(x)=y,
\] 
from which it follows that $l$ divides $k$, and hence that ${\rm Per}(Y, \psi)$ divides ${\rm Per}(X, \varphi)$. Let $q$ be the quotient.  We will show that $q = 1$.

%Putnam's Lifting Theorem \ref{putLiftThm} reduces the rest of the proof to the case of almost one-to-one s-bijective and u-bijective maps. In view of this, it suffices to consider the case that $\pi$ is an s-bijective map. We note that the u-bijective case is analogous and is omitted.

Smale's decomposition theorem \cite[Theorem 2.1.13]{put} implies that $X=X_1 \dot{\cup} X_2 \dot{\cup} \cdots \dot{\cup} X_{{\rm Per}(X, \varphi)}$ where each $X_i$ is clopen and $(X_i, \varphi^{{\rm Per}(X, \varphi)}|_{X_i})$ is mixing for each $i$. Similarly, we have that $Y=Y_1 \dot{\cup} Y_2 \dot{\cup} \cdots \dot{\cup} Y_{{\rm Per}(Y, \psi)}$ where each $Y_j$ is clopen and $(Y_j, \varphi^{{\rm Per}(Y, \psi)}|_{Y_j})$ is mixing for each $j$.  Since $\pi$ is almost one-to-one, it is routine to see that the set of points with unique preimage is dense in Y.  We proceed by assuming that $q>1$ and we arrive at a contradiction by showing this assumption leads to the existence of an open set in which every point has at least two preimages.

Suppose $q >1$ and let $x \in X_i$. Then $\pi(x)\in Y$ and hence $\pi(x)\in Y_j$ for some $j$. Now suppose $x^{\prime} \in X_i$, we will show that $\pi(x^{\prime}) \in Y_j$. Mixing implies that $X^s(x)$ is dense in $X_i$, so there is a sequence $\{ x_n\}_{n\in \N}$ which converges to $x^{\prime}$ and $x_n \sim_s x$ for each $n$. We have that each $\pi(x_n) \in Y_j$ and they converge to $\pi(x^{\prime})$. Since $Y_j$ is closed, $\pi(x^{\prime})\in Y_j$. To summarize, we have showed that there is unique $j$ such that $\pi(X_i) \subseteq Y_j$.  Since ${\rm Per}(Y, \psi)$ is the least integer, $k$, such that $\psi^k(Y_j) \cap Y_j \neq \emptyset $ it follows that the components of $X$ which map into $Y_j$ are precisely those components $\varphi^{m\cdot  {\rm Per}(Y, \psi)}(X_i)$ for $0 \leq m < q$. 

If $\pi(X_i) = Y_j$, then $\pi(\varphi^{{\rm Per}(Y, \psi)}(X_i)) = \psi^{{\rm Per}(Y, \psi)}(\pi(X_i)) = Y_j$ and $Y_j$ is the open set which gives the contradiction.

If $\pi(X_i) \neq Y_j$, let $L$ be the greatest integer such that $\cup_{l=0}^L \pi(\varphi^{l \cdot  {\rm Per}(Y, \psi)}(X_i)) \neq Y_j$ and let $\tilde{X} = \cup_{l=0}^L \varphi^{l \cdot  {\rm Per}(Y, \psi)}(X_i)$. $\tilde{X}$ is compact, and hence so is $\pi(\tilde{X})$.  It follows that $U = \pi(\tilde{X})^c \subseteq \pi(\varphi^{(L+1) \cdot  {\rm Per}(Y, \psi)}(X_i))$ is open and non-empty. Since $\pi(X_i) \neq Y_j$, it follows that $\pi(\varphi^{(L+1) \cdot  {\rm Per}(Y, \psi)}(X_i)) \neq Y_j$, so $V = \pi(\varphi^{(L+1) \cdot  {\rm Per}(Y, \psi)}(X_i))^c \subseteq \pi(\tilde{X})$ is open and non-empty. The fact that $\psi^{{\rm Per}(Y, \psi)}$ is mixing on $Y_j$ and the fact that ${\rm Per}(X, \varphi) = q \cdot {\rm Per}(Y, \psi)$ then imply that there exists $M$ such that 
\[ 
\psi^{M \cdot {\rm Per}(X, \varphi)}(U) \cap V \neq \emptyset. 
\]
Recall that
\[
\psi^{M \cdot {\rm Per}(X, \varphi)}(U) \subseteq \pi(\varphi^{M \cdot {\rm Per}(X, \varphi)}\varphi^{(L+1) \cdot  {\rm Per}(Y, \psi)}(X_i)) = \pi(\varphi^{(L+1) \cdot  {\rm Per}(Y, \psi)}(X_i)).
\]
so we have an open set contained in $\pi(\tilde{X}) \cap \pi(\varphi^{(L+1)\cdot{\rm Per}(Y, \psi)}(X_i))$. Now, each point $y$ in this open set has at least two preimages, and as above this is a contradiction to $q >1$ and therefore we must have $q = 1$ and ${\rm Per}(Y, \psi) = {\rm Per}(X, \varphi)$.

\end{proof}

\section{Finite equivalence and almost conjugacy}\label{Sec:Finite_equiv}
The definitions of finite equivalence and almost conjugacy for shifts of finite type (see \cite[Definitions 8.3.1 and 9.3.1]{LM}) naturally generalize to other dynamical systems. Among the first instances of this type of generalization go back to Parry \cite{Par} and Adler and Marcus \cite{AM}. However, the definition of a Smale space appeared after \cite{Par} and \cite{AM}, so we give a self-contained treatment of finite equivalence and almost conjugacy for Smale spaces. However, we note that the results of this section are not new since they follow easily from  \cite{AM, Par}, and are included for completeness and to emphasize the power of combining results for shifts of finite type and Bowen's theorem. In contrast, we will see in Section \ref{sec:correspondences} that such proofs cannot be used to obtain results on the existence of correspondences for general Smale spaces.

\begin{definition}[{\cite[p.89]{Par}}, also see {\cite[Definition 8.3.1]{LM}}] \label{def:finite-equiv}
Let $(X,\varphi)$ and $(Y,\psi)$ be Smale spaces. Then a \emph{finite equivalence} between $(X,\varphi)$ and $(Y, \psi)$ is the following diagram:
\[
\begin{tikzpicture}
\Corr{0}{0}{(X,\varphi)}{\pi_X}{(M,\mu)}{\pi_Y}{(Y,\psi)}
\end{tikzpicture}
\]
where
\begin{enumerate}
\item $(M,\mu)$ is a Smale space;
\item $\pi_X$ and $\pi_Y$ are finite-to-one factor maps.
\end{enumerate}
\end{definition} 

\begin{prop} [{\cite[p.89]{Par}}, also see {\cite[Proposition 8.3.4]{LM}}] \label{finiteEquIsAnEqu} 
Finite equivalence for Smale spaces is an equivalence relation.
\end{prop}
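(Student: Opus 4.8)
The plan is to establish the three defining properties of an equivalence relation, reflexivity, symmetry, and transitivity, the first two being essentially formal and the last being the substance of the argument.

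For reflexivity, given a Smale space $(X,\varphi)$ I would take $(M,\mu)=(X,\varphi)$ together with $\pi_X=\pi_Y=\id_X$. The identity map is a factor map and is one-to-one, hence finite-to-one, so this diagram is a finite equivalence from $(X,\varphi)$ to itself. For symmetry, I would simply observe that the defining diagram is manifestly symmetric: if $(X,\varphi) \xleftarrow{\pi_X} (M,\mu) \xrightarrow{\pi_Y} (Y,\psi)$ is a finite equivalence, then the same Smale space $(M,\mu)$ with the roles of $\pi_X$ and $\pi_Y$ interchanged is a finite equivalence from $(Y,\psi)$ to $(X,\varphi)$.

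The heart of the proof is transitivity, and the key tool is the fibre product of Definition \ref{defOfFibPro}. Suppose I am given a finite equivalence $(X,\varphi) \xleftarrow{\pi_X} (M,\mu) \xrightarrow{\pi_Y} (Y,\psi)$ together with a finite equivalence $(Y,\psi) \xleftarrow{\rho_Y} (N,\nu) \xrightarrow{\rho_Z} (Z,\zeta)$. Since $\pi_Y$ and $\rho_Y$ are finite-to-one factor maps, the fibre product $(W,\omega):=\fib{(M,\mu)}{\pi_Y}{\rho_Y}{(N,\nu)}$, with $\omega=\mu\times\nu$, is defined, and by \cite[Theorem 2.4.2]{put} it is again a Smale space. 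Writing $P_1:(W,\omega)\to(M,\mu)$ and $P_2:(W,\omega)\to(N,\nu)$ for the projection maps, I would set $\Pi_X:=\pi_X\circ P_1$ and $\Pi_Z:=\rho_Z\circ P_2$ and claim that $(X,\varphi)\xleftarrow{\Pi_X}(W,\omega)\xrightarrow{\Pi_Z}(Z,\zeta)$ is the desired finite equivalence. That $\Pi_X$ and $\Pi_Z$ are continuous and intertwine the dynamics is immediate, since each is a composition of maps of Smale spaces; the points requiring verification are surjectivity and finiteness of fibres, which reduce to the corresponding properties of $P_1$ and $P_2$. For surjectivity, given $m\in M$ the surjectivity of $\rho_Y$ produces $n\in N$ with $\rho_Y(n)=\pi_Y(m)$, so $(m,n)\in W$ and $P_1$ is onto; composing with the surjection $\pi_X$ shows $\Pi_X$ is a factor map, and symmetrically for $\Pi_Z$. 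For finiteness, the fibre $P_1^{-1}(m)=\{(m,n)\mid \rho_Y(n)=\pi_Y(m)\}$ is in bijection with $\rho_Y^{-1}(\pi_Y(m))$, which is finite because $\rho_Y$ is finite-to-one; hence $P_1$ is finite-to-one, and $\Pi_X=\pi_X\circ P_1$ is a composition of finite-to-one maps, so it is finite-to-one. The argument for $\Pi_Z$ is identical with the two factors exchanged.

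The only genuine obstacle is transitivity, and within it precisely the fact that the projection maps of a fibre product of finite-to-one factor maps are themselves finite-to-one factor maps; this is a standard property of fibre products (compare \cite[Proposition 8.3.3]{LM}), and everything else in the proof is formal. I would note that this proposition is the Smale space analogue of \cite[Proposition 8.3.4]{LM}, and that the fibre-product construction used here is exactly the one to be reused when composing the diagrams of Sections \ref{sec:correspondences} and \ref{sec:equivalences}.
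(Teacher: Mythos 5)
Your proof is correct and follows exactly the paper's approach: the paper's own proof simply cites \cite[Theorem 2.4.2]{put} for the fact that the fibre product over factor maps is a Smale space and then notes that the argument of \cite[Proposition 8.3.4]{LM} carries over, which is precisely the fibre-product transitivity argument you have written out in full. Your version just makes explicit the details (surjectivity and finiteness of fibres of the projections) that the paper leaves to the reader.
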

\begin{proof}
By \cite[Theorem 2.4.2]{put}, the fibre product of two Smale spaces over factor maps is again a Smale space. Now the proof of \cite[Proposition 8.3.4]{LM} generalizes to Smale spaces with finite-to-one factor maps with only minor changes.
\end{proof}

\begin{prop}[cf. {\cite[Theorem 1 and Lemma 2]{Par}}, also see {\cite[Theorem 8.3.7]{LM}}] \label{finiteEqualEntropy}
Two  irreducible Smale spaces are finitely equivalent if and only if they have the same entropy, and the forward implication holds for general Smale spaces.
\end{prop}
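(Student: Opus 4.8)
The plan is to prove both directions by reducing to the corresponding statement for shifts of finite type via Bowen's theorem, using two classical or previously-established ingredients: that a finite-to-one factor map preserves topological entropy, and that finite equivalence for Smale spaces is an equivalence relation (Proposition \ref{finiteEquIsAnEqu}).

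For the forward implication, which is claimed for general Smale spaces, suppose we are handed a finite equivalence $(X,\varphi) \xleftarrow{\pi_X} (M,\mu) \xrightarrow{\pi_Y} (Y,\psi)$. The only input needed is the classical fact that a factor map never increases topological entropy and that a \emph{finite-to-one} factor map leaves it unchanged. Applying this to the two finite-to-one factor maps $\pi_X$ and $\pi_Y$ immediately gives $h(X,\varphi) = h(M,\mu) = h(Y,\psi)$, which is the assertion.

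For the backward implication, assume $(X,\varphi)$ and $(Y,\psi)$ are irreducible with equal entropy. First I would invoke the irreducible refinement of Bowen's theorem to obtain irreducible shifts of finite type $(\Sigma_X,\sigma)$ and $(\Sigma_Y,\sigma)$ together with finite-to-one factor maps $\rho_X:(\Sigma_X,\sigma)\to(X,\varphi)$ and $\rho_Y:(\Sigma_Y,\sigma)\to(Y,\psi)$. Since these maps are finite-to-one they preserve entropy, so $h(\Sigma_X,\sigma)=h(X,\varphi)=h(Y,\psi)=h(\Sigma_Y,\sigma)$. Now $(\Sigma_X,\sigma)$ and $(\Sigma_Y,\sigma)$ are irreducible shifts of finite type with equal entropy, so the shift-of-finite-type case (\cite[Theorem 8.3.7]{LM}, equivalently Parry's \cite[Theorem 1 and Lemma 2]{Par}) shows they are finitely equivalent. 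Finally, each Smale space is finitely equivalent to its cover: taking the intermediate space to be $(\Sigma_X,\sigma)$ with legs $\rho_X$ and $\id_{\Sigma_X}$ exhibits $(X,\varphi)$ as finitely equivalent to $(\Sigma_X,\sigma)$, and similarly for $(Y,\psi)$. Writing $\sim$ for finite equivalence, transitivity (Proposition \ref{finiteEquIsAnEqu}) then yields the chain $(X,\varphi)\sim(\Sigma_X,\sigma)\sim(\Sigma_Y,\sigma)\sim(Y,\psi)$, and hence a finite equivalence between $(X,\varphi)$ and $(Y,\psi)$.

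The only step requiring genuine care is producing an \emph{irreducible} shift of finite type cover of an irreducible Smale space: Bowen's theorem as recalled in the excerpt yields only a shift of finite type, and one must argue that for an irreducible target the cover may be taken irreducible. I expect this to be handled by passing to an irreducible component of $\Sigma$ whose image exhausts $X$, using that the Bowen cover is one-to-one on a dense $G_\delta$ together with the irreducibility of $X$; this component necessarily carries the full entropy $h(X,\varphi)$ and surjects onto $X$. Everything else is either classical (entropy invariance under finite-to-one factor maps, and the irreducible shift-of-finite-type dichotomy) or already available in the excerpt (transitivity of finite equivalence), which is exactly the sense in which these results follow from \cite{AM,Par} combined with Bowen's theorem.
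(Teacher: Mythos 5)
Your proposal is correct and takes essentially the same route as the paper: the forward direction via entropy invariance under finite-to-one factor maps, and the converse via Bowen's theorem (with irreducible covers), entropy preservation, and the shift-of-finite-type result \cite[Theorem 8.3.7]{LM}. The only cosmetic difference is the final gluing step: the paper simply composes the factor maps, exhibiting $(X,\varphi) \xleftarrow{\eta_X \circ \pi_{\Sigma_X}} (\Sigma_M,\sigma) \xrightarrow{\eta_Y \circ \pi_{\Sigma_Y}} (Y,\psi)$ as an explicit finite equivalence, whereas you chain three finite equivalences together via transitivity (Proposition \ref{finiteEquIsAnEqu}); both are valid, and the paper, like you, asserts without detailed proof that the Bowen covers of irreducible Smale spaces may be taken irreducible.
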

\begin{proof}
Since finite-to-one factor maps preserve entropy \cite[Problem 8.2.6]{HK}, finitely equivalent Smale spaces have the same entropy. 

For the other implication, let $(X,\varphi)$ and $(Y,\psi)$ be irreducible Smale spaces with equal entropy. Bowen's theorem \cite{Bow} implies there exist shifts of finite type $(\Sigma_X,\sigma)$ and $(\Sigma_Y,\sigma)$ and finite-to-one factor maps 
\[ \eta_X : (\Sigma_X, \sigma) \rightarrow (X, \varphi) \quad \text{ and } \quad \eta_Y: (\Sigma_Y, \sigma) \rightarrow (Y,\psi). \]
Since $(X,\varphi)$ and $(Y,\psi)$ are irreducible, we can also choose $(\Sigma_X,\sigma)$ and $(\Sigma_Y, \sigma)$ to be irreducible. Furthermore, finite-to-one factor maps preserve entropy \cite[Problem 8.2.6]{HK}, so $(\Sigma_X,\sigma)$ and $(\Sigma_Y, \sigma)$ have the same entropy. Hence, \cite[Theorem 8.3.7]{LM} implies  $(\Sigma_X,\sigma)$ and $(\Sigma_Y, \sigma)$ are finitely equivalent as shifts of finite type, see \cite[Definition 8.3.1]{LM}. That is, there exists a diagram
\[
\begin{tikzpicture}
\Corr{0}{0}{(\Sigma_X,\sigma)}{\pi_{\Sigma_X}}{(\Sigma_M,\sigma)}{\pi_{\Sigma_Y}}{(\Sigma_Y,\sigma)}
\end{tikzpicture}
\]
where
\begin{enumerate}
\item  $(\Sigma_M, \sigma)$ is a shift of finite type; 
\item $\pi_{\Sigma_X}$ and $\pi_{\Sigma_Y}$ are finite-to-one factor maps.
\end{enumerate}
Thus the following diagram is a finite equivalence:
\[
\begin{tikzpicture}
\Corr{0}{0}{(X,\varphi)}{\eta_X \circ \pi_{\Sigma_X}}{(\Sigma_M,\sigma)}{\eta_Y \circ \pi_{\Sigma_Y}}{(Y,\psi)}
\end{tikzpicture} \qedhere
\]
\end{proof}

Next we consider almost conjugacy in the setting of irreducible Smale spaces.

\begin{definition}[{\cite[Definition 2.16]{AM}}, also see {\cite[Definition 9.3.1]{LM}}] \label{almostConj}
Let $(X,\varphi)$ and $(Y,\psi)$ be irreducible Smale spaces. Then an \emph{almost conjugacy} between $(X,\varphi)$ and $(Y, \psi)$ is a diagram
\[
\begin{tikzpicture}
\Corr{0}{0}{(X,\varphi)}{\pi_X}{(M,\mu)}{\pi_Y}{(Y,\psi)}
\end{tikzpicture}
\]
such that
\begin{enumerate}
\item $(M,\mu)$ is a Smale space;
\item $\pi_X$ and $\pi_Y$ are finite-to-one almost one-to-one factor maps.
\end{enumerate}
\end{definition} 

\begin{prop} [{\cite[Theorem 2.17]{AM}}]
Almost conjugacy for irreducible Smale spaces is an equivalence relation.
\end{prop}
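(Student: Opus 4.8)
The plan is to verify the three defining properties of an equivalence relation, putting essentially all of the work into transitivity. Reflexivity and symmetry are immediate: for reflexivity take $(M,\mu)=(X,\varphi)$ with $\pi_X=\pi_Y=\id$, which is a finite-to-one factor map that is genuinely one-to-one, hence has ${\rm deg}=1$; symmetry follows by reading the defining diagram backwards, i.e.\ interchanging the roles of $\pi_X$ and $\pi_Y$.

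For transitivity, suppose we are given almost conjugacies $(X,\varphi)\xleftarrow{\pi_X}(M,\mu)\xrightarrow{\pi_Y}(Y,\psi)$ and $(Y,\psi)\xleftarrow{\rho_Y}(N,\nu)\xrightarrow{\rho_Z}(Z,\zeta)$. First I would form the fibre product $(P,\rho):=\fib{(M,\mu)}{\pi_Y}{\rho_Y}{(N,\nu)}$ over $(Y,\psi)$, which is a Smale space by \cite[Theorem 2.4.2]{put} since $\pi_Y$ and $\rho_Y$ are finite-to-one factor maps. Exactly as in the proof of Proposition \ref{finiteEquIsAnEqu}, the projections $P_1,P_2$ are finite-to-one factor maps: surjectivity and finiteness of the fibres $P_1^{-1}(m)=\{m\}\times\rho_Y^{-1}(\pi_Y(m))$ both come from $\rho_Y$ (resp.\ $\pi_Y$) being a finite-to-one factor map. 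Hence $\pi_X\circ P_1:(P,\rho)\to(X,\varphi)$ and $\rho_Z\circ P_2:(P,\rho)\to(Z,\zeta)$ are finite-to-one factor maps, and the resulting diagram is at least a finite equivalence. The only content beyond Proposition \ref{finiteEquIsAnEqu} is to check that these two composites are almost one-to-one.

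For this I would argue with doubly transitive points (points whose forward and backward orbits are both dense), relying on two facts: (a) in an irreducible Smale space the doubly transitive points form a dense $G_\delta$; and (b) if $\pi:(M,\mu)\to(X,\varphi)$ is an almost one-to-one finite-to-one factor map onto an irreducible Smale space, then every doubly transitive $x$ has a single preimage and that preimage has dense orbit in $M$. Granting these, pick $x$ doubly transitive in $X$. By (b) its unique $\pi_X$-preimage $m$ has dense orbit, so $\pi_Y(m)$ has dense orbit in $Y$ (a factor map carries the closure of an orbit onto the closure of its image), i.e.\ $\pi_Y(m)$ is doubly transitive in $Y$. Applying (b) to $\rho_Y$ at the doubly transitive point $\pi_Y(m)$ yields a unique $\rho_Y$-preimage $n$, and then $(m,n)$ is the only point of $P$ over $x$ under $\pi_X\circ P_1$; hence ${\rm deg}(\pi_X\circ P_1)=1$. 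The symmetric argument, started from a doubly transitive point of $Z$ and run through $\rho_Z$, $\rho_Y$ and $\pi_Y$, shows ${\rm deg}(\rho_Z\circ P_2)=1$, completing transitivity.

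The main obstacle is fact (b). For shifts of finite type it is precisely the Adler--Marcus theory underlying \cite[Theorem 2.17]{AM} -- doubly transitive points realise the degree and their fibres consist of doubly transitive points -- and one must port it to Smale spaces using expansiveness together with the hyperbolic bracket structure. A genuine subtlety here is that Definition \ref{almostConj} only asks that $(M,\mu)$ be a Smale space, not an irreducible one, so the statement in (b) that the preimage has dense orbit forces attention onto the single irreducible component of $M$ carrying that orbit; I would need to check that this component surjects onto both $(X,\varphi)$ and $(Y,\psi)$, so that images of doubly transitive points remain doubly transitive. An alternative route, parallel to the way Proposition \ref{finiteEqualEntropy} reduces finite equivalence to \cite[Theorem 8.3.7]{LM}, is to use Bowen's Theorem \cite{Bow} to pull (b) back to irreducible shifts of finite type and invoke \cite[Theorem 2.17]{AM} there; the care required is in lifting the maps $\pi_X,\pi_Y$ compatibly so that the almost one-to-one property is preserved under the reduction.
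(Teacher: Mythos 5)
First, a point of comparison: the paper does not actually prove this proposition --- it is quoted wholesale from Adler--Marcus \cite[Theorem 2.17]{AM}, in the same way that Proposition \ref{finiteEquIsAnEqu} outsources its work to \cite[Proposition 8.3.4]{LM}. So you are attempting to supply an argument the paper deliberately leaves to the literature. Your skeleton --- reflexivity and symmetry trivially, transitivity via the fibre product together with a degree theory organized around doubly transitive points --- is indeed the standard Adler--Marcus/Lind--Marcus route (compare \cite[Theorem 9.3.3]{LM}), and your fact (a) is unproblematic Baire category. The problem is that everything rests on your fact (b), which is both unproven and, as literally stated, false.

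Definition \ref{almostConj} allows the middle space $(M,\mu)$ to be an arbitrary Smale space, and this is not a cosmetic issue. Take $X$ to be, say, the full $2$-shift with fixed point $p$, let $W$ be a one-point Smale space, set $M=X\sqcup W$, and let $\pi$ be the identity on $X$ and send $W$ to $p$: this is a finite-to-one, almost one-to-one factor map, every doubly transitive $x\in X$ has a unique preimage, but that preimage has dense orbit only in the component $X$, never in $M$. Consequently your chain ``$m$ has dense orbit in $M$, hence $\pi_Y(m)$ has dense orbit in $Y$'' breaks: to conclude that $\pi_Y(m)$ is doubly transitive in $Y$ you must show that the irreducible component $M_i$ of (the non-wandering part of) $M$ containing $m$ satisfies $\pi_Y(M_i)=Y$. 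You flag this, but the check you defer is exactly the hard part. Repairing it requires, at minimum: (i) replacing $M$ by its non-wandering set, using that periodic points of $X$ and $Y$ lift through finite-to-one maps to periodic points of $M$, so that both maps stay surjective of degree one; (ii) the entropy facts that finite-to-one factor maps preserve entropy and, crucially, that a \emph{proper} closed invariant subset of an irreducible Smale space has strictly smaller entropy (this rests on uniqueness and full support of the Bowen measure --- nothing in the paper provides it), which is what forces the $\omega$- and $\alpha$-limit sets of $m$ to exhaust $M_i$ and then forces $\pi_Y(M_i)=Y$; and (iii) the first half of (b) itself --- that degree one implies doubly transitive points have unique preimages --- which for Smale spaces is precisely the core of the degree theory of \cite{AM} and \cite[Section 9.1]{LM}; the paper's Lemma \ref{perLemma} only ever uses density of points with unique preimage, which is too weak to run your argument. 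Your fallback of reducing to shifts of finite type via Bowen's theorem has the same status: composing Bowen covers with $\pi_X$ and $\pi_Y$ need not preserve degree one (degrees do not behave well under composition without the very degree theory being established), which is presumably why the paper quotes \cite[Theorem 2.17]{AM} here rather than arguing as in Proposition \ref{finiteEqualEntropy}. In short: the architecture is right, but the lemma carrying all the weight is misstated and unestablished, so the proof as proposed is incomplete.
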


\begin{prop}[cf. {\cite[Corollary 13.2]{AM}}, also see {\cite[Theorem 9.3.2]{LM}}]
Two  irreducible Smale spaces, $(X,\varphi)$ and $(Y,\psi)$, are almost conjugate if and only if they have equal entropy and ${\rm Per}(X, \varphi)={\rm Per}(Y, \psi)$.
\end{prop}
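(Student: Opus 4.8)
The plan is to prove both implications, treating this as the Smale space analogue of the classification of irreducible shifts of finite type up to almost conjugacy in \cite[Theorem 9.3.2]{LM}. The forward implication is the ``soft'' direction: an almost conjugacy must preserve both entropy and the quantity $\mathrm{Per}$. The reverse implication is where the real content lies, and I would obtain it by pulling everything back to shifts of finite type via Bowen's theorem and then quoting \cite[Theorem 9.3.2]{LM}, exactly as Proposition \ref{finiteEqualEntropy} handles finite equivalence.

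For the forward direction, suppose we are given an almost conjugacy $(X,\varphi) \xleftarrow{\pi_X} (M,\mu) \xrightarrow{\pi_Y} (Y,\psi)$ with $\pi_X, \pi_Y$ finite-to-one and almost one-to-one. Since finite-to-one factor maps preserve entropy \cite[Problem 8.2.6]{HK}, we get $h(X,\varphi) = h(M,\mu) = h(Y,\psi)$. For the period, the point is that almost one-to-one factor maps between irreducible Smale spaces preserve $\mathrm{Per}$, which is precisely Lemma \ref{perLemma}; applying it to $\pi_X$ and $\pi_Y$ yields $\mathrm{Per}(X,\varphi) = \mathrm{Per}(M,\mu) = \mathrm{Per}(Y,\psi)$. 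This uses that $(M,\mu)$ is irreducible. If one does not build irreducibility of the common extension into Definition \ref{almostConj}, I would first replace $(M,\mu)$ by the irreducible component $M_0$ of its nonwandering set (Smale's decomposition \cite[Theorem 2.1.13]{put}) containing the unique preimage of a transitive point $x_0$ of $X$; one checks that this preimage is recurrent, that $\pi_X|_{M_0}$ remains almost one-to-one onto $X$, and that $\pi_Y(M_0)=Y$ because $\pi_Y|_{M_0}$ is entropy-preserving while a proper closed invariant subset of the irreducible space $Y$ has strictly smaller entropy. Running the symmetric argument then forces $\mathrm{Per}(Y,\psi)\mid \mathrm{Per}(X,\varphi)$ and $\mathrm{Per}(X,\varphi)\mid\mathrm{Per}(Y,\psi)$.

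For the reverse direction, assume $(X,\varphi)$ and $(Y,\psi)$ are irreducible with $h(X,\varphi)=h(Y,\psi)$ and $\mathrm{Per}(X,\varphi)=\mathrm{Per}(Y,\psi)$. By Bowen's theorem \cite{Bow} there are irreducible shifts of finite type $(\Sigma_X,\sigma)$ and $(\Sigma_Y,\sigma)$ together with finite-to-one factor maps $\eta_X \colon (\Sigma_X,\sigma) \to (X,\varphi)$ and $\eta_Y\colon (\Sigma_Y,\sigma)\to(Y,\psi)$ that are one-to-one on a dense $G_\delta$, hence almost one-to-one. Taking $(\Sigma_X,\sigma)$ as its own common extension via $\eta_X$ and the identity shows that $(X,\varphi)$ is almost conjugate to $(\Sigma_X,\sigma)$, and likewise $(Y,\psi)$ to $(\Sigma_Y,\sigma)$. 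Because $\eta_X$ and $\eta_Y$ are finite-to-one they preserve entropy, and because they are almost one-to-one between irreducible spaces Lemma \ref{perLemma} shows they preserve $\mathrm{Per}$; hence $(\Sigma_X,\sigma)$ and $(\Sigma_Y,\sigma)$ have equal entropy and equal period. Now \cite[Theorem 9.3.2]{LM} applies and provides an almost conjugacy of shifts of finite type between $(\Sigma_X,\sigma)$ and $(\Sigma_Y,\sigma)$, which is in particular an almost conjugacy of Smale spaces. Finally, since almost conjugacy of irreducible Smale spaces is an equivalence relation \cite[Theorem 2.17]{AM}, transitivity along
\[
(X,\varphi) \sim (\Sigma_X,\sigma) \sim (\Sigma_Y,\sigma) \sim (Y,\psi)
\]
gives the desired almost conjugacy between $(X,\varphi)$ and $(Y,\psi)$.

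I expect the main obstacle to be bookkeeping about the almost one-to-one condition rather than any hard new estimate, since the substantive classification is imported from \cite[Theorem 9.3.2]{LM}. Concretely, the delicate point is ensuring that the maps remain almost one-to-one after the reductions: in the reverse direction this is handled cleanly by invoking transitivity of almost conjugacy \cite[Theorem 2.17]{AM}, so that one never has to verify directly that a composite of almost one-to-one maps is almost one-to-one; whereas in the forward direction the only genuine subtlety is the reduction to an irreducible common extension, which rests on strict monotonicity of entropy for irreducible Smale spaces. If Definition \ref{almostConj} is read with an irreducible common extension, this last subtlety disappears and the forward direction is immediate from Lemma \ref{perLemma}.
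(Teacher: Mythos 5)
Your proof is correct and follows essentially the same route as the paper: the forward direction combines entropy preservation of finite-to-one factor maps with Lemma \ref{perLemma}, and the reverse direction pulls back to irreducible shifts of finite type via Bowen's theorem, applies \cite[Theorem 9.3.2]{LM}, and glues the resulting almost conjugacies to the Bowen maps. Your two refinements --- patching the possible non-irreducibility of the common extension before applying Lemma \ref{perLemma} in the forward direction (a point the paper's proof passes over silently), and invoking transitivity of almost conjugacy from \cite[Theorem 2.17]{AM} rather than composing the factor maps explicitly as in Proposition \ref{finiteEqualEntropy} --- are minor variations, not a different argument.
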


\begin{proof}
Suppose $(X,\varphi)$ and $(Y,\psi)$ are almost conjugate. Since any almost conjugacy is a finite equivalence, $(X,\varphi)$ and $(Y,\psi)$ have equal entropy by Proposition \ref{finiteEqualEntropy}. In addition, Lemma \ref{perLemma} implies that ${\rm Per}(X, \varphi)={\rm Per}(Y, \psi)$.

Now suppose $(X,\varphi)$ and $(Y,\psi)$ have equal entropy and ${\rm Per}(X, \varphi)={\rm Per}(Y, \psi)$. Then Bowen's theorem implies there are irreducible shifts of finite type $(\Sigma_X, \sigma)$  and $(\Sigma_Y, \sigma)$ and almost one-to-one factor maps $\pi_X$ and $\pi_Y$ such that
\[
\pi_X : (\Sigma_X, \sigma) \rightarrow (X, \varphi) \quad \hbox{ and } \quad \pi_Y: (\Sigma_Y, \sigma) \rightarrow (Y, \psi).
\]
Moreover, since finite-to-one factor maps preserve entropy \cite[Problem 8.2.6]{HK}, $(\Sigma_X, \sigma)$ and $(\Sigma_Y, \sigma)$ have the same entropy as $(X, \varphi)$ and $(Y,\psi)$, respectively. Thus $(\Sigma_X, \sigma)$ and $(\Sigma_Y, \sigma)$ have equal entropy as well. Now Lemma \ref{perLemma} implies that 
\[
{\rm Per}(\Sigma_X, \sigma) ={\rm Per}(X,\varphi) = {\rm Per}(Y,\psi)= {\rm Per}(\Sigma_Y, \sigma), 
\]
and  \cite[Theorem 9.3.2]{LM} implies that $(\Sigma_X, \sigma)$ and $(\Sigma_Y, \sigma)$ are almost conjugate. It follows that $(X,\varphi)$ and $(Y,\psi)$ are also almost conjugate (the details are similar to the proof of Proposition \ref{finiteEqualEntropy}).
\end{proof}

\section{Correspondences}\label{sec:correspondences}
While the results in the previous section on finite equivalence and almost conjugacy for Smale spaces naturally generalize from those for shifts of finite type, our main goal in this paper is the construction of ``generalized morphisms" between Smale spaces. In particular, from such a morphism we would like to obtain a map at the level of Putnam's homology theory for Smale spaces; a finite equivalence or an almost conjugacy does not naturally give such a map in general. We must put further conditions on the maps in a finite equivalence to get an induced map.

\begin{definition}\label{correspondence}
Suppose $(X,\varphi)$ and $(Y,\psi)$ are Smale spaces. A {\em correspondence} from $(X,\varphi)$ to $(Y,\psi)$ consists of:
\begin{enumerate}
\item a Smale space $(M,\mu)$;
\item a $u$-bijective map $\pi_u:(M,\mu) \to (X,\varphi)$; and
\item an $s$-bijective map $\pi_s:(M,\mu) \to (Y,\psi)$.
\end{enumerate}
We write correspondences as
\[
\begin{tikzpicture}
\Corr{0}{0}{(X,\varphi)}{\pi_u}{(M,\mu)}{\pi_s}{(Y,\psi)}
\end{tikzpicture}
\]
%and sometimes denote this as $(M,\mu):(X,\varphi) \to (Y,\psi)$\footnote{OK so this isn't the best shorthand for a correspondence, any suggestions?}.
\end{definition}

Correspondences satisfy both the requirements mentioned at the beginning of this section. They naturally give rise to maps on homology and give a notion of a generalized morphism between Smale spaces, in the sense that two correspondences can be composed. Using the notation of the definition of a correspondence, the induced maps are defined by
\begin{align*}
\pi_s^s \circ \pi_u^{s *}&: H^s(X,\varphi) \to H^s(Y,\psi) \quad \text{ and} \\
\pi_u^u \circ \pi_s^{u *}&: H^u(Y,\psi) \to H^u(X,\varphi).
\end{align*}

\begin{deflem}
The {\em composition} of correspondences
\[
\begin{tikzpicture}
\Corr{0}{0}{(X,\varphi)}{\pi_{u}}{(M,\mu)}{\pi_{s}}{(Y,\psi)}
\node at (4.35,1) { and };
\Corr{6}{0}{(Y,\psi)}{\pi_{u}'}{(M',\mu')}{\pi_{s}'}{(Z,\eta)}
\end{tikzpicture}
\]
is defined to be the correspondence
\[
\begin{tikzpicture}
\Corr{0}{0}{(X,\varphi)}{(\pi_{u} \circ P_1)}{\fib{(M,\mu)}{\pi_{s}}{\pi_{u}'}{(M',\mu')}}{(\pi_{s}' \circ P_2)}{(Z,\eta)}
\end{tikzpicture}
\] 
where $\fib{(M,\mu)}{\pi_{s}}{\pi_{u}'}{(M',\mu')}$ is the fibre product and $P_1$ and $P_2$ are the projection maps (see Definition \ref{defOfFibPro}).
\end{deflem}

\begin{proof}
Our first goal is to show that composition of correspondences is compatible at the level of the maps induced on homology; that is, we show that
\begin{equation}\label{composition_hom}
(\pi_s' \circ P_2)^s \circ (\pi_u \circ P_1)^{s *} = (\pi_s'^s \circ \pi_u'^{s *})\circ (\pi_s^s \circ \pi_u^{s *})
\end{equation}
To show this, consider the diagram
\[
\begin{tikzpicture}
\Corr{0}{0}{(X,\varphi)}{\pi_{u}}{(M,\mu)}{\pi_{s}}{(Y,\psi)}
\Corr{3}{0}{}{\pi_{u}'}{(M',\mu')}{\pi_{s}'}{(Z,\eta)}
\Corr{1.5}{1.3}{}{P_1}{(M,\mu) _{\pi_{s}}\hspace{-0.1cm}\times_{\pi_{u}'} \hspace{-0.1cm}(M',\mu')}{P_2}{}
\end{tikzpicture}
\]
By \cite[Theorem 5.4.1]{put}, we have
\begin{equation}\label{comp_hom1}
(\pi_s' \circ P_2)^s \circ (\pi_u \circ P_1)^{s *} = (\pi_s')^s \circ (P_2)^s \circ (P_1)^{s *} \circ (\pi_u)^{s *}
\end{equation}
By \cite[Theorem 5.1]{DKWfunProPutHom} (see Remark \ref{nonWanCon} below), we have
\[
(P_2)^s \circ (P_1)^{s *}=(\pi_u')^{s *} \circ (\pi_s)^s,
\]
and substituting this into the right hand side of \eqref{comp_hom1} gives \eqref{composition_hom}, which is the desired result.
\end{proof}

\begin{remark} \label{nonWanCon}
The statement of Theorem 5.1 in \cite{DKWfunProPutHom} contains the assumption that the fibre product space is nonwandering; this assumption is superfluous. In particular, it follows from the proof of \cite[Theorem 5.1]{DKWfunProPutHom} that the class of Smale spaces which have an s/u-bijective pair is closed under taking fibre products over s- or u-bijective maps.
\end{remark}

\subsection{Correspondences for subshifts of finite type} \label{corForSFT}

For shifts of finite type, the existence of correspondences and finite equivalences are equivalent. In particular, we add correspondences to the list of equivalent statements in \cite[Theorem 8.3.8]{LM} (which first appeared as Theorem 1 and Lemma 2 in \cite{Par}) as follows.

\begin{thm}[reformulation of {\cite[Theorem 8.3.8]{LM}}] \label{838LMinCor}
Let $G$ and $H$ be irreducible graphs. Then the following are equivalent
\begin{enumerate}
\item $(\Sigma_G, \sigma)$ and $(\Sigma_H, \sigma)$ are finitely equivalent;
\item $(\Sigma_G, \sigma)$ and $(\Sigma_H, \sigma)$ have equal entropy;
\item There exists a nonnegative integer matrix $F\ne 0$ such that $F A_G  = A_H F$; and
\item There is a correspondence from $(\Sigma_G, \sigma)$ to $(\Sigma_H, \sigma)$.
\end{enumerate}
\end{thm}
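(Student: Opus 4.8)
The plan is to take the equivalence of statements (1), (2), and (3) as already established by \cite[Theorem 8.3.8]{LM}, and to reduce the theorem to linking statement (4) to the others. One implication is immediate: given a correspondence as in Definition \ref{correspondence}, the maps $\pi_u$ and $\pi_s$ are u- and s-bijective, hence finite-to-one factor maps by \cite[Theorem 2.5.3]{put}, so the defining diagram of a correspondence is a finite equivalence. This yields (4) $\Rightarrow$ (1). It therefore remains to prove (3) $\Rightarrow$ (4), that is, to manufacture a correspondence directly from an intertwining matrix $F$.

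For the construction I would work at the level of graphs. Here $F$ is a $|H^0| \times |G^0|$ nonnegative integer matrix, which I would encode as a bipartite family of auxiliary arrows: for each pair $(h,g)$ introduce $F_{hg}$ arrows from $h \in H^0$ to $g \in G^0$, and let the vertex set $M^0$ of a new graph $M$ be this set of arrows. The entrywise identity $(FA_G)_{hg''} = (A_H F)_{hg''}$ says precisely that, for each $h$ and $g''$, the set of pairs (an $F$-arrow $h \to g$, a $G$-edge $g \to g''$) and the set of pairs (an $H$-edge $h \to h''$, an $F$-arrow $h'' \to g''$) have equal cardinality. I would fix a bijection $\beta_{h,g''}$ between these two sets and declare the edges of $M$ to be the pairs of the first kind, with an edge $(f,a)$ running from the arrow $f$ to the arrow $f''$, where $(b,f'') = \beta_{h,g''}(f,a)$. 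Reading off the $G$-edge $a$ then defines a graph homomorphism $\pi_u : M \to G$, and reading off the $H$-edge $b$ defines a graph homomorphism $\pi_s : M \to H$.

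The content lies in verifying that these are covering maps of the correct type and that they are surjective. By construction the outgoing edges at a vertex $f$ (lying over $g$) biject with the $G$-edges leaving $g$, so $\pi_u$ is right-covering and hence u-bijective; dually, the bijection $\beta$ makes the incoming edges at $f''$ (lying over $h''$) biject with the $H$-edges entering $h''$, so $\pi_s$ is left-covering and hence s-bijective. The step I expect to be the main obstacle is surjectivity, since covering maps are required to be surjective graph homomorphisms, which forces $F$ to have no zero row and no zero column. This is where irreducibility enters. Writing the column relation $A_H c_g = \sum_{g'} (A_G)_{g'g} c_{g'}$ for the columns $c_g$ of $F$, nonnegativity shows that the set of zero columns is closed under taking predecessors in $G$; irreducibility of $G$ then forces this set to be empty once $F \ne 0$, and the symmetric row argument using irreducibility of $H$ rules out zero rows. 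With no zero rows or columns, $\pi_u$ and $\pi_s$ are surjective on vertices, and the covering property upgrades this to surjectivity of the induced maps on the shift spaces; the same property shows $M$ has no sources or sinks, so $(\Sigma_M, \sigma)$ is a genuine shift of finite type. This produces the desired correspondence from $(\Sigma_G, \sigma)$ to $(\Sigma_H, \sigma)$ and completes (3) $\Rightarrow$ (4), closing the cycle of equivalences.
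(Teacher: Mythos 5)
Your proposal is correct and follows essentially the same route as the paper: the paper likewise cites \cite[Theorem 8.3.8]{LM} for the equivalence of (1)--(3), obtains (3) $\Rightarrow$ (4) from the fact that the Lind--Marcus construction produces right-covering and left-covering maps (which are $u$- and $s$-bijective by \cite[Theorem 2.5.17]{put}), and gets (4) $\Rightarrow$ (1) from \cite[Theorem 2.5.3]{put}. The only difference is that you inline the $F$-edge construction and the no-zero-row/column argument explicitly, whereas the paper simply cites the proof in \cite{LM}; your details (including the use of irreducibility to rule out zero rows and columns) are accurate.
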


\begin{proof}
The proof of \cite[Theorem 8.3.8]{LM} shows the equivalence of (1) -- (3) and that (3) induces a finite equivalence between $(\Sigma_G, \sigma)$ and $(\Sigma_H, \sigma)$ where the maps $\pi_X$ and $\pi_Y$ can be taken to be right-covering and left-covering maps, respectively. Then (3) implies (4) follows from \cite[Theorem 2.5.17]{put}, which asserts that right-covering maps are $u$-bijective and left covering maps are $s$-bijective. Finally (4) implies (1) follows from the definitions and \cite[Theorem 2.5.3]{put}, which implies that s- and u- bijective maps are finite-to-one.
\end{proof}

\begin{remark}
A comment on the proof of \cite[Theorem 8.3.8]{LM} is in order. To define a finite equivalence, a Smale space $(M,\mu)$ was constructed in the proof. This Smale space is also required in the definition of a correspondence. We note that no assumptions are required on $(M,\mu)$. On the one hand, since $(\Sigma_G, \sigma)$ and $(\Sigma_H, \sigma)$ are irreducible, $(M,\mu)$ can be taken to be irreducible if desired. On the other hand, the process of restriction to an irreducible component of maximal entropy (see \cite[8.3.6]{LM}) does not respect the map induced on homology; explicit examples can be produced using the full two shift. Secondly, we have reversed the edge directions in the graph $M$ implicitly constructed in the proof of Theorem \ref{838LMinCor}. The $F$-edges in \cite{LM} have initial vertex in $G$ and terminal vertex in $H$, ours have initial vertex in $H$ and terminal vertex in $G$. This is because our matrix $F$ appears on the left of $A_G$ while theirs appears on the right.
\end{remark}

As we have mentioned above, we are interested in the map induced by a correspondence at the level of Putnam's homology theory for Smale spaces. For the following theorem, recall that the homology of a shift of finite type is its dimension group.

\begin{thm} \label{corConFromMatrix}
Let $G$ and $H$ be irreducible graphs and $F$ be a nonzero, nonnegative integer matrix such that $F A_G = A_H F$. Then, using the notation of Definition \ref{mapFromMatrix}, there exists a correspondence 
\begin{equation}\label{corresp_F}
\begin{tikzpicture}
\Corr{0}{0}{(\Sigma_G,\sigma)}{\pi_u}{(\Sigma_M,\sigma)}{\pi_s}{(\Sigma_H,\sigma)}
\end{tikzpicture}
\end{equation}
such that 
\[
\pi_s^s \circ \pi_u^{s *} =  F_* \text{ and } 
\pi_u^u \circ \pi_s^{u *}=  F^*.
\]
\end{thm}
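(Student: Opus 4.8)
The plan is to build the correspondence by hand from the matrix $F$, reusing the graph that underlies the implication (3) $\Rightarrow$ (4) of Theorem \ref{838LMinCor}, and then to read the induced maps straight off Propositions \ref{leftcoveringMatrixMap} and \ref{rightcoveringMatrixMap}. Since $FA_G = A_H F$, the matrix $F$ is $|H^0| \times |G^0|$, and I would regard it as a finite set $\FF$ of ``$F$-edges'', with $F_{IJ}$ of them running from a vertex $i(f) = I \in H^0$ to a vertex $t(f) = J \in G^0$ (this is the paper's orientation convention). I would then realize the graph $M$ with vertex set $M^0 = \FF$ together with graph homomorphisms $\theta_G \colon M \to G$ and $\theta_H \colon M \to H$ whose vertex maps are $\theta_G(f) = t(f)$ and $\theta_H(f) = i(f)$; the identity $FA_G = A_H F$ is exactly the counting statement that supplies the bijection on commuting squares needed to define the edge set of $M$ so that $\theta_G$ is right-covering and $\theta_H$ is left-covering. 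By \cite[Theorem 2.5.17]{put}, $\theta_G$ induces a $u$-bijective map $\pi_u \colon (\Sigma_M, \sigma) \to (\Sigma_G, \sigma)$ and $\theta_H$ an $s$-bijective map $\pi_s \colon (\Sigma_M, \sigma) \to (\Sigma_H, \sigma)$, so \eqref{corresp_F} is a correspondence. Since $G$ has no sinks and $H$ has no sources, the covering properties force every vertex of $M$ to have both incoming and outgoing edges, so $M$ has no sources or sinks and $(\Sigma_M, \sigma)$ is a bona fide shift of finite type.

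Next I would compute the induced map on $D^s$. Because $\pi_u$ comes from the right-covering map $\theta_G$, Proposition \ref{rightcoveringMatrixMap} gives $\pi_u^{s*} \colon D^s(\Sigma_G, \sigma) \to D^s(\Sigma_M, \sigma)$ as $[v,j] \mapsto [Ev, j]$, where $E$ is the $|M^0| \times |G^0|$ matrix with $E_{fJ} = 1$ iff $t(f) = J$. Because $\pi_s$ comes from the left-covering map $\theta_H$, Proposition \ref{leftcoveringMatrixMap} gives $\pi_s^{s} \colon D^s(\Sigma_M, \sigma) \to D^s(\Sigma_H, \sigma)$ as $[w,j] \mapsto [Dw, j]$, where $D$ is the $|H^0| \times |M^0|$ matrix with $D_{If} = 1$ iff $i(f) = I$. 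Composing, $\pi_s^s \circ \pi_u^{s*}$ sends $[v,j] \mapsto [DEv,j]$, and the key computation is
\[
(DE)_{IJ} = \sum_{f \in \FF} D_{If} E_{fJ} = \#\{\, f \in \FF \mid i(f) = I,\ t(f) = J \,\} = F_{IJ}.
\]
Hence $DE = F$ and $\pi_s^s \circ \pi_u^{s*} = F_*$ in the notation of Definition \ref{mapFromMatrix}. Note that the choice of square-bijection used to define the edges of $M$ affects only the edge set, hence the dynamics, but not the vertex maps, so this identity is independent of that choice.

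For the unstable side I would run the transpose of the same argument. Since $D^u$ is built from $A_G^t$ and $A_H^t$, the $u$-analogues of Propositions \ref{leftcoveringMatrixMap} and \ref{rightcoveringMatrixMap} (which follow from the $D^u$ functoriality in \cite[Sections 3.4 and 3.5]{put} in the same way the stated $D^s$ versions follow from the $D^s$ functoriality) give the induced maps by the transposed matrices: $\pi_s^{u*} \colon D^u(\Sigma_H, \sigma) \to D^u(\Sigma_M, \sigma)$ is $[v,j] \mapsto [D^t v, j]$ and $\pi_u^{u} \colon D^u(\Sigma_M, \sigma) \to D^u(\Sigma_G, \sigma)$ is $[w,j] \mapsto [E^t w, j]$. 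Their composition sends $[v,j] \mapsto [E^t D^t v, j] = [(DE)^t v, j] = [F^t v, j]$, which is precisely $F^*$. I expect the main friction to be bookkeeping rather than conceptual: pinning down the index conventions so that Propositions \ref{leftcoveringMatrixMap} and \ref{rightcoveringMatrixMap} are applied with the correct orientation (source versus target graph), and justifying the transposed $u$-side formulas, for which the cleanest route is the duality between $D^s$ and $D^u$ under transposing adjacency matrices together with the functoriality of \cite[Sections 3.4 and 3.5]{put}.
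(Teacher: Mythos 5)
Your proposal is correct and follows essentially the same route as the paper's proof: both build $(\Sigma_M,\sigma)$ from the Lind--Marcus construction with vertex set the $F$-edges, identify $\pi_s^s$ and $\pi_u^{s*}$ with the $0$-$1$ matrices $D$ and $E$ via Propositions \ref{leftcoveringMatrixMap} and \ref{rightcoveringMatrixMap}, compute $DE=F$ on vertices (noting the choices only affect edges), and obtain the unstable statement by transposing $D$, $E$, and $F$.
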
 

\begin{proof}
In the proof of Theorem \ref{838LMinCor}, given $F$ such that $FA_G=A_H F$, we used \cite[Theorem 8.3.8]{LM} to construct the correspondence \eqref{corresp_F} where $\pi_u$ and $\pi_s$ are induced from right-covering and left-covering maps, respectively. However, there are choices involved in the construction of both maps $\pi_u$ and $\pi_s$ as well as in the graph $M$, but any choice will lead to the desired result. To see this, note that the choices determine the edges of $M$, but not its vertices. Our proof only involves the vertices and the maps on the vertices of the relevant graphs.

Given the left and right covering maps, Propositions \ref{leftcoveringMatrixMap} and \ref{rightcoveringMatrixMap} (also see \cite[Definitions 2.4.11 and 8.2.4]{LM}) define $0$-$1$-matrices $D$ and $E$, associated to $\pi_s$ and $\pi_u$, respectively. Let an $F$-edge $e$ be a vertex in $M$, then  $E_{eJ}$ is one if $t(e) = J$ and zero otherwise and $D_{Ie}$ is one if $i(e) = I$ and zero otherwise. 
\[
(DE)_{IJ} = \sum_{e\in M^0} D_{Ie} \cdot E_{eJ} = | \{ F\text{-edges from }I \text{ to }J\}|=F_{IJ}.
\]
This shows that $\pi_s^s \circ \pi_u^{s *} =  F_*$.  The proof that $\pi_u^u \circ \pi_s^{u *}=  F^*$ follows by taking transposes of $D$, $E$, and $F$.
\end{proof}

The above proof relies heavily on the construction of a correspondence from an intertwining matrix as described in the proof of \cite[Theorem 8.3.8]{LM}.  As noted, there are ``choices'' in this construction which lead to different correspondences.  The fact that these choices do not change the resulting map on homology indicates that in some sense the different correspondences are ``equivalent''.  In the following section we will define several notions of equivalent correspondences and we will say more about the current situation in Example \ref{LMconstruction}.

\section{Equivalences between correspondences}\label{sec:equivalences}

In this section we introduce several notions of equivalence of correspondences. In Section \ref{equiv_corresp} we define four notions of equivalence on correspondences, each leading to a notion of invertibility. Section \ref{equiv_Smale} shows that each notion of invertibility at the correspondence level leads to an equivalence relation at the Smale space level. In Section \ref{equiv_exist} we consider equivalences based on only the existence of correspondences rather than ones associated to correspondences that induce particular maps on homology. In section \ref{sec:dimension} we study the implications of the existence of a correspondence on the dimensions of the Smale spaces involved.

\subsection{Equivalences of correspondences}\label{equiv_corresp}

We present four notions of equivalence for correspondences. 
\begin{definition}\label{Iso}
We say that correspondences
\[
\begin{tikzpicture}
\Corr{0}{0}{(X,\varphi)}{\pi_{u1}}{(M_1,\mu_1)}{\pi_{s1}}{(Y,\psi)}
\node at (4.35,1) { and };
\Corr{6}{0}{(X,\varphi)}{\pi_{u2}}{(M_2,\mu_2)}{\pi_{s2}}{(Y,\psi)}
\end{tikzpicture}
\]
are {\em isomorphic} if there exists a conjugacy $\Theta: (M_1, \mu_1) \to {(M_2,\mu_2)}$ such that the following diagram commutes.
\[
\begin{tikzpicture}
\Correq{0}{0}{(X,\varphi)}{(Y,\psi)}{\pi_{u1}}{\pi_{s1}}{\pi_{u2}}{\pi_{s2}}
\begin{scope}[xshift=0cm,yshift=1.8cm]
\node at (0,0) {$(M_1,\mu_1)$};
\node at (3,0) {$(M_2,\mu_2)$};
\draw[->] (.9,0) -- node[above] {$\Theta$\,} (2.1,0);
\end{scope}
\end{tikzpicture}
\]
\end{definition}

\begin{definition}\label{ratIso}
We say that correspondences
\[
\begin{tikzpicture}
\Corr{0}{0}{(X,\varphi)}{\pi_{u1}}{(M_1,\mu_1)}{\pi_{s1}}{(Y,\psi)}
\node at (4.35,1) { and };
\Corr{6}{0}{(X,\varphi)}{\pi_{u2}}{(M_2,\mu_2)}{\pi_{s2}}{(Y,\psi)}
\end{tikzpicture}
\]
are {\em rationally isomorphic} if there exists a correspondence 
\[
\begin{tikzpicture}
\Corr{0}{0}{(M_1,\mu_1)}{\theta_1}{(M,\mu)}{\theta_2}{(M_2,\mu_2)}
\end{tikzpicture}
\]
such that $\theta_1: (M, \mu) \to {(M_1,\mu_1)}$ is $m$-to-one and $\theta_2: (M, \mu) \to {(M_2,\mu_2)}$ is $n$-to-one, both $\theta_1$ and $\theta_2$ are s- and u-bijective, and  the following diagram commutes.
\[
\begin{tikzpicture}
\Corr{0}{1.8}{(M_1,\mu_1)}{\theta_1}{(M,\mu)}{\theta_2}{(M_2,\mu_2)};
\Correq{0}{0}{(X,\varphi)}{(Y,\psi)}{\pi_{u1}}{\pi_{s1}}{\pi_{u2}}{\pi_{s2}}
%\begin{scope}[xshift=0cm,yshift=1.8cm]
%\node at (0,0) {$(M_1,\mu_1)$};
%\node at (3,0) {$(M_2,\mu_2)$};
%\draw[->] (.9,0) -- node[above] {$\Theta$\,} (2.1,0);
%\end{scope}
\end{tikzpicture}
\]
\end{definition}

\begin{definition}\label{HEquiv}
We say that correspondences
\[
\begin{tikzpicture}
\Corr{0}{0}{(X,\varphi)}{\pi_{u1}}{(M_1,\mu_1)}{\pi_{s1}}{(Y,\psi)}
\node at (4.35,1) { and };
\Corr{6}{0}{(X,\varphi)}{\pi_{u2}}{(M_2,\mu_2)}{\pi_{s2}}{(Y,\psi)}
\end{tikzpicture}
\]
are {\em $H$-equivalent} if 
\begin{enumerate}
\item $\pi_{s1}^s \circ \pi_{u1}^{s*} = \pi_{s2}^s \circ \pi_{u2}^{s*}$ and
\item $\pi_{u1}^u \circ \pi_{s1}^{u*} = \pi_{u2}^u \circ \pi_{s2}^{u*}$.
\end{enumerate}
\end{definition}

\begin{definition}\label{ratHEquiv}
We say that correspondences
\[
\begin{tikzpicture}
\Corr{0}{0}{(X,\varphi)}{\pi_{u1}}{(M_1,\mu_1)}{\pi_{s1}}{(Y,\psi)}
\node at (4.35,1) { and };
\Corr{6}{0}{(X,\varphi)}{\pi_{u2}}{(M_2,\mu_2)}{\pi_{s2}}{(Y,\psi)}
\end{tikzpicture}
\]
are {\em rationally $H$-equivalent} if there exists nonzero $q\in \Q$ such that
\begin{enumerate}
\item ($\pi_{s1}^s \otimes id_{\Q}) \circ (\pi_{u1}^{s*} \otimes id_{\Q}) = q \cdot ((\pi_{s2}^s \otimes id_{\Q}) \circ (\pi_{u2}^{s*} \otimes id_{\Q}$)) and
\item ($\pi_{u1}^u \otimes id_{\Q}) \circ (\pi_{s1}^{u*}\otimes id_{\Q}) = q \cdot((\pi_{u2}^u \otimes id_{\Q}) \circ (\pi_{s2}^{u*} \otimes id_{\Q}))$,
\end{enumerate}
as maps on the rationalization of Putnam's homology theory (e.g., $\pi_{s1}^s \otimes id_{\Q} : H^s(M_1,\mu_1)\otimes \Q \rightarrow H^s(Y,\psi)\otimes \Q$).
\end{definition}

\begin{prop} \label{relEquCor}
We have the following relationships between the four notions of the equivalence: 
\begin{enumerate}
\item Isomorphic implies rationally isomorphic, $H$-equivalent, and rational $H$-equivalent. 
\item $H$-equivalent implies rationally $H$-equivalent.
\item Rationally isomorphic implies rationally $H$-equivalent.
\end{enumerate}
\end{prop}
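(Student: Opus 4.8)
The plan is to verify each implication directly from the definitions, using the functoriality of Putnam's homology theory under composition (Theorem 5.4.1 of \cite{put}) together with the degree formula of Proposition \ref{bothSUmap}. Throughout I will carry out only the stable ($H^s$) identities explicitly; the unstable ($H^u$) identities follow verbatim with $(-)^s,(-)^{s*}$ replaced by $(-)^u,(-)^{u*}$.

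For the implications out of ``isomorphic'' in (1), suppose $\Theta\colon (M_1,\mu_1)\to (M_2,\mu_2)$ is a conjugacy with $\pi_{u2}\circ\Theta=\pi_{u1}$ and $\pi_{s2}\circ\Theta=\pi_{s1}$. Since $(-)^s$ is covariant for s-bijective maps and $(-)^{s*}$ is contravariant for u-bijective maps, functoriality gives $\pi_{s1}^s=\pi_{s2}^s\circ\Theta^s$ and $\pi_{u1}^{s*}=\Theta^{s*}\circ\pi_{u2}^{s*}$. Because $\Theta$ is a conjugacy it is $1$-to-$1$ and both s- and u-bijective, so Proposition \ref{bothSUmap} (with $n=1$) yields $\Theta^s\circ\Theta^{s*}=\id$. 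Substituting, $\pi_{s1}^s\circ\pi_{u1}^{s*}=\pi_{s2}^s\circ\pi_{u2}^{s*}$, which is condition (1) of $H$-equivalence, and the unstable condition is analogous; hence isomorphic implies $H$-equivalent. For rational isomorphism I would exhibit the mediating correspondence $(M,\mu)=(M_1,\mu_1)$ with $\theta_1=\id_{M_1}$ and $\theta_2=\Theta$: both are $1$-to-$1$ and s- and u-bijective, and the two commuting-square conditions are exactly $\pi_{u2}\circ\Theta=\pi_{u1}$ and $\pi_{s2}\circ\Theta=\pi_{s1}$. Finally, rational $H$-equivalence out of ``isomorphic'' follows by chaining the first step with (2).

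Part (2) is immediate: the rationalization functor $(-)\otimes\id_\Q$ preserves compositions, so it carries the two integral homology identities defining $H$-equivalence to the corresponding rational identities, and one takes $q=1$ in Definition \ref{ratHEquiv}.

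Part (3) is the substantive step. Assume a rational isomorphism given by $\theta_1\colon (M,\mu)\to(M_1,\mu_1)$ ($m$-to-$1$) and $\theta_2\colon(M,\mu)\to(M_2,\mu_2)$ ($n$-to-$1$), both s- and u-bijective, with $\pi_{u1}\circ\theta_1=\pi_{u2}\circ\theta_2$ and $\pi_{s1}\circ\theta_1=\pi_{s2}\circ\theta_2$; write $\rho_u$ and $\rho_s$ for these two common composites. Functoriality applied to the factorization through $\theta_1$ gives $\rho_s^s\circ\rho_u^{s*}=\pi_{s1}^s\circ(\theta_1^s\circ\theta_1^{s*})\circ\pi_{u1}^{s*}$, and Proposition \ref{bothSUmap} gives $\theta_1^s\circ\theta_1^{s*}=m\cdot\id$, so the quantity equals $m\,(\pi_{s1}^s\circ\pi_{u1}^{s*})$. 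The same computation applied to the factorization through $\theta_2$ produces $n\,(\pi_{s2}^s\circ\pi_{u2}^{s*})$. Hence $m\,(\pi_{s1}^s\circ\pi_{u1}^{s*})=n\,(\pi_{s2}^s\circ\pi_{u2}^{s*})$ as integral maps; tensoring with $\Q$ and dividing by $m$ gives condition (1) of Definition \ref{ratHEquiv} with $q=n/m\neq 0$, and the unstable identity yields the same $q$. The one point requiring care — and the reason Definition \ref{ratIso} insists that $\theta_1,\theta_2$ be both s- and u-bijective of finite degree rather than merely fitting into a correspondence — is precisely the invocation of Proposition \ref{bothSUmap}: it is the ``both-bijective plus finite degree'' hypothesis that converts $\theta_i^s\circ\theta_i^{s*}$ into multiplication by the degree and thereby produces the rational scalar $q$. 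This is the main obstacle in the sense that it is where the hypotheses are genuinely used; the rest is bookkeeping with the composition laws.
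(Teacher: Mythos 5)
Your proof is correct and follows essentially the same route as the paper: the substantive step (3) is exactly the paper's argument, factoring the common composites through $\theta_1$ and $\theta_2$, invoking Proposition \ref{bothSUmap} to get $\theta_i^s\circ\theta_i^{s*}=(\deg\theta_i)\cdot\id$, and equating to obtain $q$ as the ratio of the degrees (with the same $q$ appearing in the unstable identity). Parts (1) and (2), which the paper dismisses as routine, are verified correctly in your write-up by the same functoriality considerations.
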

\begin{proof}
The proof of the first two items is routine. For the final item, suppose that
\[
\begin{tikzpicture}
\Corr{0}{0}{(X,\varphi)}{\pi_{u1}}{(M_1,\mu_1)}{\pi_{s1}}{(Y,\psi)}
\node at (4.35,1) { and };
\Corr{6}{0}{(X,\varphi)}{\pi_{u2}}{(M_2,\mu_2)}{\pi_{s2}}{(Y,\psi)}
\end{tikzpicture}
\]
are rationally isomorphic and $\theta_1: (M,\mu) \rightarrow (M_1, \mu_1)$ and $\theta_2: (M,\mu) \rightarrow (M_2, \mu_2)$ are both s- and u-bijective where $\theta_1$ is $n_1$-to-one and $\theta_2$ is $n_2$-to-one, as in Definition \ref{ratIso}. We show that there exists nonzero $q\in \mathbb{Q}$ such that
\begin{equation}\label{rat-isom to rat-H-equiv form}
(\pi_{s1}^s \otimes id_{\Q}) \circ (\pi_{u1}^{s*} \otimes id_{\Q}) = q \cdot \left( ( \pi_{s2}^s \otimes id_{\Q}) \circ (\pi_{u2}^{s*} \otimes id_{\Q}) \right),
\end{equation}
and note that the proof of Definition \ref{ratHEquiv} (2) is analogous.

To show \eqref{rat-isom to rat-H-equiv form}, Proposition \ref{bothSUmap} implies that 
\[
\theta_1^{s} \circ \theta_1^{s*}= n_1 \cdot id_{H^s(M_1,\mu_1)} \text{ and } \theta_2^{s} \circ \theta_2^{s*}= n_2 \cdot id_{H^s(M_2,\mu_2)}
\]
The result now follows from the commutative diagram in Definition \ref{ratIso} and the computation:
\begin{align*}
 n_1 \cdot (\pi^s_{s1} \circ \pi^{s*}_{u1}) & = (\pi_{s1} \circ \theta_1)^s \circ (\pi_{u1} \circ \theta_1)^{s*} \\
 & = (\pi_{s2}\circ \theta_2)^s \circ (\pi_{u2} \circ \theta_2)^{s*} \\
 & = n_2 \cdot (\pi^s_{s2} \circ \pi^{s*}_{u2}). \qedhere
\end{align*}
\end{proof}

We note that the reverse implications in Proposition \ref{relEquCor} are all false, see Example \ref{revImp}.

\begin{example}\label{LMconstruction}
Consider the set-up of Theorem \ref{corConFromMatrix} and the Lind-Marcus process for constructing a correspondence employed in the proof.  Let 
\[
\begin{tikzpicture}
\Corr{0}{0}{(\Sigma_G,\sigma)}{\pi_{u1}}{(\Sigma_{M_1},\sigma)}{\pi_{s1}}{(\Sigma_H,\sigma)}
\node at (4.35,1) { and };
\Corr{6}{0}{(\Sigma_G,\sigma)}{\pi_{u2}}{(\Sigma_{M_2},\sigma)}{\pi_{s2}}{(\Sigma_H,\sigma)}
\end{tikzpicture}
\]
be two correspondences constructed from the Lind-Marcus process by making different ``choices'' (see \cite[p.g. 287]{LM}). Theorem \ref{corConFromMatrix} implies that the correspondences are H-equivalent.  It is straightforward to come up with examples where different choices lead to  $\Sigma_{M_1}$ and $\Sigma_{M_2}$ which are not conjugate, and hence correspondences which are not isomorphic. In fact, such examples can be constructed with $\Sigma_G$ and $\Sigma_H$ both equal to the full 2-shift.
\end{example}

\begin{remark} \label{compositionAndEquivalenceBrady}
It is worth noting that each of the four notions of equivalence of correspondences ``behave well'' with respect to composition of correspondences.  More precisely, if 
\[
\begin{tikzpicture}
\Corr{0}{0}{(X,\varphi)}{\pi_{u}}{(M,\mu_1)}{\pi_{s}}{(Y,\psi)}
\end{tikzpicture}
\]
is a correspondence and 
\[
\begin{tikzpicture}
\Corr{0}{0}{(Y,\psi)}{\pi_{u1}}{(M_1,\mu_1)}{\pi_{s1}}{(Z,\xi)}
\node at (4.35,1) { and };
\Corr{6}{0}{(Y,\psi)}{\pi_{u2}}{(M_2,\mu_2)}{\pi_{s2}}{(Z,\xi)}
\end{tikzpicture}
\]
are correspondences which are equivalent under one of the four notions of equivalence. Then 
\begin{equation}\label{compositions}
\begin{tikzpicture}
\Corr{0}{0}{(X,\varphi)}{\pi_{u} \circ P_1}{\fib{(M,\mu)}{\pi_{s}}{\pi_{u1}}{(M_1,\mu_1)}}{\pi_{s1} \circ P_2}{(Z,\xi)}
\node at (5,1) {and };
\Corr{7}{0}{(X,\varphi)}{\pi_{u} \circ P_1}{\fib{(M,\mu)}{\pi_{s}}{\pi_{u2}}{(M_2,\mu_2)}}{\pi_{s2} \circ P_2}{(Z,\xi)}
\end{tikzpicture}
\end{equation} 
are equivalent (under the same notion of equivalence).  The equivalence relation for which this is the least obvious is \emph{rational isomorphism}, but in this case if
\[
\begin{tikzpicture}
\Corr{0}{0}{(M_1,\mu_1)}{\theta_1}{(M',\mu')}{\theta_2}{(M_2,\mu_2)}
\end{tikzpicture}
\]
is a correspondence which gives the rational isomorphism (see Definition \ref{ratIso}), then 
\[
\begin{tikzpicture}
\Corr{0}{0}{\hspace{-2cm}\fib{(M,\mu)}{\pi_{s}}{\pi_{u1}}{(M_1,\mu_1)}}{id \times \theta_1}{\fib{(M,\mu)}{\pi_{s}}{\pi_{u1}\circ \theta_1}{(M',\mu')}}{id \times \theta_2}{\hspace{2cm}\fib{(M,\mu)}{\pi_{s}}{\pi_{u2}}{(M_2,\mu_2)}}
\end{tikzpicture}
\]
gives a rational isomorphism between the correspondences in \eqref{compositions}. We leave it to the reader to verify the details.
\end{remark}

\subsection{Equivalence of Smale spaces}\label{equiv_Smale}

Each notion of equivalence leads to a natural notion of an invertible correspondence:

\begin{definition}[Strongly Invertible]\label{strong_inv}
A correspondence
\[
\begin{tikzpicture}
\Corr{-0.5}{0}{(X,\varphi)}{\pi_u}{(M,\mu)}{\pi_s}{(Y,\psi)}
\node at (6.5,1.15) {is {\em strongly invertible} };
\node at (6.5,0.65) {(respectively {\em rationally invertible}) if };
\node at (6.5,0.15) { there exists a correspondence};
\Corr{10.5}{0}{(Y,\psi)}{\pi_u'}{(M',\mu')}{\pi_s'}{(X,\varphi)}
\end{tikzpicture}
\]
such that the composition in one order is isomorphic (respectively, rationally isomorphic) to the identity correspondence on $(X,\varphi)$ and in the other order is isomorphic (respectively, rationally isomorphic) to the identity on $(Y,\psi)$. The second correspondence will be called the strong (respectively rational) inverse of the first.
\end{definition}

\begin{definition}[$H$-Invertible]\label{weak_inv}
A correspondence
\[
\begin{tikzpicture}
\Corr{0}{0}{(X,\varphi)}{\pi_u}{(M,\mu)}{\pi_s}{(Y,\psi)}
\node at (6.5,1) {is {\em $H$-invertible} if };
\node at (6.5,0.5) { there exists a correspondence};
\Corr{10}{0}{(Y,\psi)}{\pi_u'}{(M',\mu')}{\pi_s'}{(X,\varphi)}
\end{tikzpicture}
\]
such that
\begin{align*}
(\pi_s')^s \circ (\pi_u')^{s *} \circ \pi_s^s \circ \pi_u^{s *} &= \id_{H^s(X,\varphi)} \\
\pi_u^u \circ \pi_s^{u *} \circ (\pi_u')^u \circ (\pi_s')^{u *} &= \id_{H^u(X,\varphi)} \\
\pi_s^s \circ \pi_u^{s *} \circ (\pi_s')^s \circ (\pi_u')^{s *} &= \id_{H^s(Y,\psi)} \\
(\pi_u')^u \circ (\pi_s')^{u *} \circ \pi_u^u \circ \pi_s^{u *} &= \id_{H^u(Y,\psi)}.
\end{align*}
In other words, the composition of the correspondences in each of the possible orders is $H$-equivalent to the identity correspondence. In a similar way, a correspondence is {\em rationally $H$-invertible} if there exists a correspondence such that the  the composition of the correspondences in each of the possible orders is rationally $H$-equivalent to the identity correspondence. 
\end{definition}

\begin{remark}
Inverses are not unique, but are unique up to the appropriate notion of equivalence of correspondences.  For example, ``the" rational inverse to a given rationally invertible correspondence is unique up to rational isomorphism. This follows from a short argument which uses the definition of inverse and Remark \ref{compositionAndEquivalenceBrady}.  

%In particular if we let $\sim$ represent one of the four notions of equivalence of correspondences and $M$ denote a correspondence from $(X,\varphi) \to (Y,\psi)$ with inverses (in the appropriate sense) $M_1$ and $M_2$, then we have
%$$
%M_1 \sim M_1 \circ id_X \sim M_1 \circ (M \circ M_2) \sim (M_1 \circ M) \circ M_2 \sim id_Y \circ M_2 \sim M_2.
%$$
\end{remark}
In summary, we have four notions of invertibility for a correspondence. Associated to each notion of invertibility is an equivalence relation on Smale spaces: 
\begin{definition} \label{equivSmaleSpaceLevel}
Given Smale spaces $(X,\varphi)$ and $(Y, \psi)$, we write $(X,\varphi) \sim (Y, \psi)$ if there exists an invertible correspondence from $(X,\varphi)$ to $(Y,\psi)$. We denote these four equivalence relations on Smale spaces respectively by $\sim_{strong}$, $\sim_{H}$, $\sim_{Rat}$, and $\sim_{Rat-H}$. 
\end{definition}
We note that equivalence for correspondences is distinct (but related) to equivalence for Smale spaces. Compare $H$-equivalence for correspondences in Definition \ref{HEquiv} with $H$-equivalence for Smale spaces, defined immediately above; context should make clear whether a given equivalence is between Smale spaces or correspondences.

\begin{prop} \label{relEquSmale}
The relations, $\sim_{strong}$, $\sim_{H}$, $\sim_{Rat}$, and $\sim_{Rat-H}$ are equivalence relations. Moreover, we have the following relationships between the four notions of the equivalence on Smale spaces: 
\begin{enumerate}
\item $\sim_{strong}$ implies $\sim_{H}$, $\sim_{Rat}$, and $\sim_{Rat-H}$. 
\item $\sim_{H}$ implies $\sim_{Rat-H}$.
\item $\sim_{Rat}$ implies $\sim_{Rat-H}$.
\end{enumerate}
\end{prop}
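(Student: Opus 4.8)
The plan is to separate the two assertions — that each of the four relations is an equivalence relation, and the three implications — and to transport as much as possible onto Proposition \ref{relEquCor} and the composition Definition/Lemma, so that the only genuinely nontrivial point is transitivity. Reflexivity is immediate: for any Smale space $(X,\varphi)$ the identity correspondence $(X,\varphi) \xleftarrow{\id} (X,\varphi) \xrightarrow{\id} (X,\varphi)$ has both legs equal to the identity map, which is s- and u-bijective, and it is its own inverse under each of the four notions, since all of its compositions-with-identity are literally the identity correspondence. Symmetry is built into Definitions \ref{strong_inv} and \ref{weak_inv}: the requirement that a second correspondence compose with the first, in both orders, to something equivalent to the relevant identity correspondence is symmetric in the two correspondences, so the inverse of an invertible correspondence is again invertible, with the original as its inverse.

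For transitivity, suppose $C_1$ is an invertible correspondence from $(X,\varphi)$ to $(Y,\psi)$ with inverse $C_1'$, and $C_2$ an invertible correspondence from $(Y,\psi)$ to $(Z,\eta)$ with inverse $C_2'$. I would propose, as inverse of the composite ``$C_1$-then-$C_2$'', the composite ``$C_2'$-then-$C_1'$'', and verify the two composition conditions (for $\sim_{strong}$, $\sim_{Rat}$) or the four homology conditions (for $\sim_H$, $\sim_{Rat\text{-}H}$). The argument rests on two structural facts: composition of correspondences is associative up to conjugacy (the two bracketings of an iterated fibre product give conjugate Smale spaces), and the identity correspondence is a two-sided unit up to conjugacy, since $\fib{(M,\mu)}{\pi_s}{\id}{(Y,\psi)} \cong (M,\mu)$ via $m \mapsto (m,\pi_s(m))$. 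Granting these, the composite ``$C_1$-$C_2$-$C_2'$-$C_1'$'' re-brackets so that the inner block ``$C_2$-then-$C_2'$'' is equivalent to the identity correspondence on $(Y,\psi)$; replacing it by the identity and invoking unitality collapses the whole composite to ``$C_1$-then-$C_1'$'', which is equivalent to the identity on $(X,\varphi)$. The replacement step needs compatibility of the equivalence with composition on \emph{both} sides: Remark \ref{compositionAndEquivalenceBrady} records this when the varied factor sits on the right, and the case of a varied left factor is entirely analogous, so applying each once lets me substitute the equivalent block sitting in the middle of a composite. The reverse ordering ``$C_2'$-$C_1'$-$C_1$-$C_2$'' is equivalent to the identity on $(Z,\eta)$ by the symmetric computation.

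For $\sim_H$ and $\sim_{Rat\text{-}H}$ the transitivity computation can instead be run entirely on homology, which is cleaner: the composition Definition/Lemma (equation \eqref{composition_hom}) shows that the assignments $C \mapsto \pi_s^s \circ \pi_u^{s*}$ and $C \mapsto \pi_u^u \circ \pi_s^{u*}$ are functorial under composition — covariantly on $H^s$ and contravariantly on $H^u$ — so the four defining identities of $H$-invertibility for the composite follow by inserting the identities for $C_1$ and $C_2$ and cancelling the middle factors; the $\sim_{Rat\text{-}H}$ case is identical after tensoring with $\Q$. I expect the main obstacle to be the bookkeeping of the previous paragraph for the notions $\sim_{strong}$ and $\sim_{Rat}$, which cannot be reduced to homology: one must confirm associativity and unitality of composition up to the appropriate equivalence and that Remark \ref{compositionAndEquivalenceBrady}, together with its left-handed analogue, applies uniformly to all four notions. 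Once these are in place the cancellations are formal.

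Finally, the three implications follow by pushing Proposition \ref{relEquCor} through Definitions \ref{strong_inv}--\ref{equivSmaleSpaceLevel}. If a correspondence is strongly invertible, then its compositions with a strong inverse are isomorphic to the identity correspondences; since isomorphic implies $H$-equivalent, rationally isomorphic, and rationally $H$-equivalent (Proposition \ref{relEquCor}(1)), the same correspondence simultaneously witnesses $H$-, rational, and rational-$H$ invertibility, which gives item (1). Items (2) and (3) have the same shape, using respectively that $H$-equivalent implies rationally $H$-equivalent (Proposition \ref{relEquCor}(2)) and that rationally isomorphic implies rationally $H$-equivalent (Proposition \ref{relEquCor}(3)), so $\sim_H$ and $\sim_{Rat}$ each imply $\sim_{Rat\text{-}H}$.
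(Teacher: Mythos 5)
Your proposal is correct and takes essentially the same route as the paper's proof: reflexivity via the identity correspondence, symmetry built into the definition of an inverse, transitivity via the fibre-product composition of correspondences, and the three implications obtained by pushing Proposition \ref{relEquCor} through the definitions of invertibility. The paper compresses the transitivity step into a single citation of the fibre-product construction from Proposition \ref{finiteEquIsAnEqu}; your write-up simply makes explicit the bookkeeping (associativity and unitality of composition up to conjugacy, two-sided compatibility of the equivalences with composition as in Remark \ref{compositionAndEquivalenceBrady}, and the cleaner homology-level cancellation for $\sim_{H}$ and $\sim_{Rat-H}$) that the paper leaves implicit.
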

\begin{proof}
The proof that each relation is an equivalence relation is similar in each case; we give the details in the case of $\sim_{strong}$. The identity correspondence is a strongly invertible correspondence, hence for any Smale space $(X,\varphi)$, we have $(X,\varphi) \sim_{strong} (X, \varphi)$. If $(X,\varphi) \sim_{strong} (Y, \psi)$, then, by definition, there exists strongly invertible correspondence from $(X, \varphi)$ to $(Y, \psi)$. The strong inverse of this correspondence is a strongly invertible correspondence from $(Y,\psi)$ to $(X, \varphi)$. Finally, if $(X, \varphi) \sim_{strong}(Y, \psi)$ and $(Y, \psi) \sim_{strong} (Z, \zeta)$, then a fibre product construction similar to one in the proof of Proposition \ref{finiteEquIsAnEqu} implies that $(X, \varphi) \sim_{strong} (Y, \psi)$.
The second part of the proposition follows from Proposition \ref{relEquCor}.
\end{proof}
\begin{prop} \label{proRatHEquImpEquRatHom}
Let $(X,\varphi)$ and $(Y,\psi)$ be Smale spaces. If $(X,\varphi) \sim_{rat-H} (Y,\psi)$, then 
\begin{align}
\label{proRatHEquImpEquRatHom_1}
(H^s(X,\varphi)\otimes \Q, (\varphi^{-1})^s\otimes id) &\cong (H^s(Y,\psi)\otimes \Q, (\psi^{-1})^s\otimes id) \quad \text{ and} \\
\label{proRatHEquImpEquRatHom_2}
(H^u(X,\varphi)\otimes \Q, \varphi^u\otimes id) &\cong (H^s(Y,\psi)\otimes \Q, \psi^u\otimes id).
\end{align}
\end{prop}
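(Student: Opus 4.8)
The plan is to turn the definition of rational $H$-invertibility into a pair of scalar identities on rationalized homology and then apply the elementary fact that a $\Q$-linear map admitting a nonzero-scalar left inverse and a nonzero-scalar right inverse is an isomorphism. First I would fix the witnessing data: since $(X,\varphi)\sim_{rat\text{-}H}(Y,\psi)$ (Definition \ref{equivSmaleSpaceLevel}), there is a correspondence from $(X,\varphi)$ to $(Y,\psi)$ with maps $\pi_u\colon(M,\mu)\to(X,\varphi)$ and $\pi_s\colon(M,\mu)\to(Y,\psi)$, together with a rationally $H$-inverse correspondence from $(Y,\psi)$ to $(X,\varphi)$ with maps $\pi_u'\colon(M',\mu')\to(Y,\psi)$ and $\pi_s'\colon(M',\mu')\to(X,\varphi)$ (Definition \ref{weak_inv}). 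Write $f_s=\pi_s^s\circ\pi_u^{s*}\colon H^s(X,\varphi)\to H^s(Y,\psi)$ and $g_s=(\pi_s')^s\circ(\pi_u')^{s*}\colon H^s(Y,\psi)\to H^s(X,\varphi)$ for the induced maps on stable homology, and analogously $f_u=\pi_u^u\circ\pi_s^{u*}\colon H^u(Y,\psi)\to H^u(X,\varphi)$ and $g_u=(\pi_u')^u\circ(\pi_s')^{u*}\colon H^u(X,\varphi)\to H^u(Y,\psi)$.

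Next I would extract the scalar relations. By the Definition/Lemma on composition of correspondences, the map induced by a composite correspondence is the composite of the induced maps, so the composite in the order $(X,\varphi)\to(Y,\psi)\to(X,\varphi)$ induces $g_s\circ f_s$ on $H^s$ and $f_u\circ g_u$ on $H^u$, while the composite in the order $(Y,\psi)\to(X,\varphi)\to(Y,\psi)$ induces $f_s\circ g_s$ on $H^s$ and $g_u\circ f_u$ on $H^u$. Because the identity correspondence induces the identity on homology, the hypothesis that each composite is rationally $H$-equivalent to the identity correspondence (Definition \ref{ratHEquiv}) provides nonzero $q_1,q_2\in\Q$ such that, after tensoring with $\Q$, one has $g_s\circ f_s=q_1\,\id$ and $f_u\circ g_u=q_1\,\id$ on the homology of $(X,\varphi)$, and $f_s\circ g_s=q_2\,\id$ and $g_u\circ f_u=q_2\,\id$ on the homology of $(Y,\psi)$.

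I would then read off the isomorphisms. Over $\Q$, the relation $g_s\circ f_s=q_1\,\id$ with $q_1\neq0$ exhibits $q_1^{-1}g_s$ as a left inverse of $f_s\otimes\id_\Q$, so $f_s\otimes\id_\Q$ is injective, while $f_s\circ g_s=q_2\,\id$ with $q_2\neq0$ exhibits $q_2^{-1}g_s$ as a right inverse, so $f_s\otimes\id_\Q$ is surjective; hence $f_s\otimes\id_\Q$ is a $\Q$-linear isomorphism, and the same argument applied to $f_u,g_u$ shows $f_u\otimes\id_\Q$ (equivalently $g_u\otimes\id_\Q$) is an isomorphism. To upgrade these to isomorphisms of modules over the dynamics, I would invoke the functoriality proposition asserting that maps induced by s- and u-bijective factor maps intertwine the automorphisms coming from the dynamics: $\pi_s^s$ and $\pi_s^{u*}$ are equivariant because $\pi_s$ is s-bijective, and $\pi_u^u$ and $\pi_u^{s*}$ are equivariant because $\pi_u$ is u-bijective, so the composites $f_s$ and $f_u$ are equivariant, and this is preserved by $-\otimes\Q$. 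Thus $f_s\otimes\id_\Q$ furnishes the isomorphism $(H^s(X,\varphi)\otimes\Q,(\varphi^{-1})^s\otimes\id)\cong(H^s(Y,\psi)\otimes\Q,(\psi^{-1})^s\otimes\id)$, and $f_u\otimes\id_\Q$ furnishes the corresponding isomorphism on unstable homology, which is exactly the desired conclusion.

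There is no serious obstacle here; the argument is essentially formal once the definitions are unwound, and the real work lies in the bookkeeping. The points deserving care are: (i) applying the composition Definition/Lemma with the correct directions, so that one order of composition supplies the left inverse and the other the right inverse; (ii) the fact that the identity correspondence induces the identity on homology, which is what turns ``rationally $H$-equivalent to the identity'' into a genuine nonzero scalar multiple of $\id$; and (iii) that the scalars $q_1,q_2$ are nonzero, which is built into Definition \ref{ratHEquiv} and is precisely what permits inversion over $\Q$. The only place the dynamics enters is the equivariance step, which rests entirely on the intertwining asserted by the functoriality proposition for s- and u-bijective maps.
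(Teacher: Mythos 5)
Your proposal is correct and follows essentially the same route as the paper's proof: extract the scalar identities on rationalized homology from the definition of rational $H$-invertibility, conclude that $(\pi_s^s\circ\pi_u^{s*})\otimes\id_\Q$ is an isomorphism, and obtain equivariance with respect to the dynamics from the functoriality of the induced maps for s- and u-bijective factor maps. Your version is in fact slightly more careful than the paper's, which asserts a single scalar $q$ for both orders of composition, whereas you correctly allow two a priori different nonzero scalars $q_1,q_2$ (one per order) and observe that the left-inverse/right-inverse argument over $\Q$ works regardless.
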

\begin{proof}
We prove \eqref{proRatHEquImpEquRatHom_1} and note that \eqref{proRatHEquImpEquRatHom_2} is analogous. To prove \eqref{proRatHEquImpEquRatHom_1} we will construct an explicit isomorphism from a given rationally $H$-invertible correspondence from $(X,\varphi)$ to $(Y,\psi)$. As such, fix respectively such a correspondence and its inverse
\[
\begin{tikzpicture}
\Corr{0}{0}{(X,\varphi)}{\pi_{u1}}{(M_1,\mu_1)}{\pi_{s1}}{(Y,\psi)}
\node at (4.35,1) { and };
\Corr{6}{0}{(Y,\psi)}{\pi_{u2}}{(M_2,\mu_2)}{\pi_{s2}}{(X,\varphi).}
\end{tikzpicture}
\]
By the definition of rational $H$-invertibility (see also Definition \ref{ratHEquiv}) there exists a non-negative rational number $q$ such that
\begin{align*}
(\pi_{s2}^s \circ \pi^{s*}_{u2}\circ \pi_{s1}^s \circ \pi_{u1}^{s*})\otimes id_{\Q} & = q \cdot id_{H^s(X,\varphi)\otimes \Q} \quad \text{ and} \\
(\pi_{s1}^s \circ \pi^{s*}_{u1}\circ \pi_{s2}^s \circ \pi_{u2}^{s*})\otimes id_{\Q} & = q \cdot id_{H^s(Y,\psi)\otimes \Q}.
\end{align*}
Hence $\varphi:=(\pi_{s1}^s \circ \pi_{u1}^{s*}) \otimes id_{\Q}$ is an isomorphism, which interwines the maps on homology induced from the map defining the dynamics.
\end{proof}
\begin{thm} \label{StrInvIfAndOnlyIfConj}
There exists a strongly invertible correspondence
\[
\begin{tikzpicture}
\Corr{0}{0}{(X,\varphi)}{\pi_u}{(M,\mu)}{\pi_s}{(Y,\psi)}
\end{tikzpicture}
\]
if and only if $(X,\varphi)$ is conjugate to $(Y,\psi)$.
\end{thm}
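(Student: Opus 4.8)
The plan is to prove both directions by exploiting the elementary fact that a finite-to-one factor map which is additionally injective is forced to be a conjugacy, together with the behaviour of the projection maps of a fibre product recorded in \cite[Theorem 2.5.13]{put}.

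For the easy direction, suppose $h\colon (X,\varphi)\to (Y,\psi)$ is a conjugacy. A conjugacy is simultaneously s- and u-bijective, so I would take the correspondence with $(M,\mu)=(X,\varphi)$, $\pi_u=\id_X$ (u-bijective) and $\pi_s=h$ (s-bijective), and propose the correspondence $(Y,\psi)\xleftarrow{\id_Y}(Y,\psi)\xrightarrow{h^{-1}}(X,\varphi)$ as its inverse. Computing the composition in one order, the fibre product over $h$ and $\id_Y$ is $\{(x,y):h(x)=y\}$, which is conjugate to $(X,\varphi)$ via the first projection; under this identification the two structure maps become $\id_X$ and $h^{-1}\circ h=\id_X$, so the composition is isomorphic to the identity correspondence on $(X,\varphi)$. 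The other order is symmetric, so this correspondence is strongly invertible in the sense of Definition \ref{strong_inv}.

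For the converse, suppose the correspondence $(X,\varphi)\xleftarrow{\pi_u}(M,\mu)\xrightarrow{\pi_s}(Y,\psi)$ is strongly invertible with inverse $(Y,\psi)\xleftarrow{\pi_u'}(M',\mu')\xrightarrow{\pi_s'}(X,\varphi)$. I would first unpack the hypothesis that the composition $(X,\varphi)\leftarrow N \rightarrow (X,\varphi)$, where $N$ is the fibre product of $M$ and $M'$ over $\pi_s$ and $\pi_u'$ with projections $P_1,P_2$, is isomorphic to the identity correspondence on $(X,\varphi)$. By Definition \ref{Iso} this produces a conjugacy $\Theta\colon N\to X$ with $\Theta=\pi_u\circ P_1$ and $\Theta=\pi_s'\circ P_2$; in particular $\pi_u\circ P_1$ is a bijection. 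The key step is then purely formal: since $\pi_u\circ P_1=\Theta$ is injective, $P_1$ is injective, and $P_1\colon N\to M$ is a factor map (projections of a fibre product over factor maps are surjective, and $P_1$ is u-bijective by \cite[Theorem 2.5.13]{put}). A continuous bijection of compact Hausdorff spaces intertwining the dynamics is a conjugacy, so $P_1$ is a conjugacy, whence $\pi_u=\Theta\circ P_1^{-1}$ is a conjugacy $M\to X$. Running the same argument on the composition in the other order — whose middle space is the fibre product $N'$ of $M'$ and $M$ over $\pi_s'$ and $\pi_u$, and which is isomorphic to the identity correspondence on $(Y,\psi)$ — yields a conjugacy $\Theta'\colon N'\to Y$ with $\Theta'=\pi_s\circ P_2'$, forcing the s-bijective projection $P_2'\colon N'\to M$ to be a conjugacy and hence $\pi_s=\Theta'\circ (P_2')^{-1}$ to be a conjugacy $M\to Y$. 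Then $\pi_s\circ\pi_u^{-1}\colon (X,\varphi)\to(Y,\psi)$ is the desired conjugacy.

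I expect the main obstacle to be bookkeeping rather than anything conceptual. One must correctly match the structure maps of the composed correspondence against those of the identity correspondence under Definition \ref{Iso}, so that $\Theta$ genuinely equals $\pi_u\circ P_1$ and $\pi_s'\circ P_2$ rather than a twisted variant, and one must track, via \cite[Theorem 2.5.13]{put}, which of the two projections of each fibre product is the s- or u-bijective one. Once the correct projection is identified, the collapse ``injective finite-to-one factor map $\Rightarrow$ conjugacy'' does all the work, and the argument closes immediately.
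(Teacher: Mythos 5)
Your proof is correct and takes essentially the same route as the paper: the forward direction uses the identical correspondence $(X,\varphi)\xleftarrow{\id}(X,\varphi)\xrightarrow{h}(Y,\psi)$ with inverse built from $h^{-1}$, and the converse extracts, exactly as the paper does, the identities $\pi_u\circ P_1=\Theta=\pi_s'\circ P_2$ (and their analogues in the other order) from Definition \ref{Iso} to force the structure maps to be conjugacies. The only difference is expository: you spell out the collapse ``injective continuous surjection on compact spaces intertwining the dynamics is a conjugacy'' applied to the projections, a step the paper leaves implicit.
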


\begin{proof}
To prove that a conjugacy leads to a strongly invertible correspondence, let $\Phi: (X, \varphi) \to (Y,\psi)$ be a conjugacy.  Then
\[
\begin{tikzpicture}
\Corr{0}{0}{(X,\varphi)}{id}{(X,\varphi)}{\Phi}{(Y,\psi)}
\node at (4.35,1) { and };
\Corr{6}{0}{(Y,\psi)}{id}{(Y,\psi)}{\Phi^{-1}}{(X,\varphi)}
\end{tikzpicture}
\]
are strong inverses. 

For the other direction, let 
\[
\begin{tikzpicture}
\Corr{0}{0}{(Y,\psi)}{\pi_u'}{(M',\mu')}{\pi_s'}{(X,\varphi)}
\end{tikzpicture}
\]
be a strong inverse to the correspondence in the statement of the theorem. Then
\[
\begin{tikzpicture}
\Corr{0}{0}{(X,\varphi)}{\pi_{u}\circ P_1}{\fib{(M,\mu)}{\pi_{s}}{\pi_{u}'}{(M',\mu')}}{\pi_{s}'\circ P_2}{(X,\varphi)}
\end{tikzpicture}
\]
is strongly equivalent to the identity correspondence with a conjugacy $\Theta: \fib{(M,\mu)}{\pi_{s}}{\pi_{u}'}{(M',\mu')} \to (X,\varphi) $.  It follows that $\pi_{u}\circ P_1 = \Theta = \pi_{s}'\circ P_2$.  Therefore $\pi_{u}$ and $\pi_{s}'$ are conjugacies.  A similar argument, composing in the opposite order, shows that $\pi_{u}'$ and $\pi_{s}$ are conjugacies, and therefore $(X,\varphi)$ is conjugate to $(Y,\psi)$.
\end{proof}

We are now able to justify the statement in the introduction that strong equivalence of Smale spaces generalizes strong shift equivalence for shifts of finite type.

\begin{cor}\label{SFTtostongshiftequiv}
Let $(\Sigma, \sigma)$ be a shift of finite type and $(X,\varphi)$ a Smale space. Then $(\Sigma, \sigma)$ is strongly equivalent to $(X,\varphi)$ if and only if $(X,\varphi)$ is a shift of finite type and $(\Sigma, \sigma)$ and $(X,\varphi)$ are strong shift equivalent.
\end{cor}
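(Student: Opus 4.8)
The plan is to reduce the entire statement to Theorem~\ref{StrInvIfAndOnlyIfConj}, which already characterizes the existence of a strongly invertible correspondence purely in terms of conjugacy. By Definition~\ref{equivSmaleSpaceLevel}, the relation $(\Sigma,\sigma)\sim_{strong}(X,\varphi)$ means exactly that there is a strongly invertible correspondence between the two systems, and Theorem~\ref{StrInvIfAndOnlyIfConj} tells us that such a correspondence exists if and only if $(\Sigma,\sigma)$ and $(X,\varphi)$ are conjugate. Thus the corollary is equivalent to the assertion that $(\Sigma,\sigma)$ is conjugate to $(X,\varphi)$ if and only if $(X,\varphi)$ is a shift of finite type and the two are strong shift equivalent. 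This recasts the problem entirely in the classical language of shifts of finite type, where Williams' classification theorem applies.

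For the forward direction I would start from a conjugacy $(\Sigma,\sigma)\cong(X,\varphi)$ supplied by Theorem~\ref{StrInvIfAndOnlyIfConj}. Since $(\Sigma,\sigma)$ is a shift of finite type its underlying space is totally disconnected, and a conjugacy is in particular a homeomorphism, so $X$ is totally disconnected as well. Putnam's result that every totally disconnected Smale space is conjugate to a shift of finite type (Theorem~2.2.8 of~\cite{put}, cited in Section~\ref{Sec:Smale}) then forces $(X,\varphi)$ to be a shift of finite type. With both systems realized as shifts of finite type, the Williams classification theorem (see~\cite{LM}) converts the conjugacy into a strong shift equivalence of the associated adjacency matrices, which is precisely what ``strong shift equivalent'' means.

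For the reverse direction I would run the same chain backwards: assuming $(X,\varphi)$ is a shift of finite type and that $(\Sigma,\sigma)$ and $(X,\varphi)$ are strong shift equivalent, Williams' theorem produces a conjugacy $(\Sigma,\sigma)\cong(X,\varphi)$, and Theorem~\ref{StrInvIfAndOnlyIfConj} upgrades this conjugacy to a strongly invertible correspondence, yielding $(\Sigma,\sigma)\sim_{strong}(X,\varphi)$. Note that no irreducibility hypotheses are needed anywhere, since Theorem~\ref{StrInvIfAndOnlyIfConj}, Williams' theorem, and Theorem~2.2.8 of~\cite{put} all hold without such assumptions.

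Since both directions are essentially immediate consequences of Theorem~\ref{StrInvIfAndOnlyIfConj} combined with the classical Williams theorem, there is no substantial obstacle. The only step requiring genuine care is the deduction that $(X,\varphi)$ must itself be a shift of finite type in the forward direction; this is exactly where total disconnectedness and Theorem~2.2.8 of~\cite{put} must be invoked, rather than being left as an unjustified hypothesis.
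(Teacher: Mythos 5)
Your proposal is correct and follows essentially the same route as the paper, whose one-line proof cites Theorem~\ref{StrInvIfAndOnlyIfConj} together with Williams' theorem (\cite[Theorem 7.2.7]{LM}) to pass between strong equivalence, conjugacy, and strong shift equivalence. The only addition is that you spell out the step the paper leaves implicit, namely that conjugacy to a shift of finite type forces $X$ to be totally disconnected and hence, by Putnam's Theorem~2.2.8, itself a shift of finite type.
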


\begin{proof}
This result follows from the previous result and \cite[Theorem 7.2.7]{LM}. 
\end{proof}

\begin{lemma} \label{mustBeShiftLemma}
Let $(\Sigma, \sigma)$ be a shift of finite type and $(X,\varphi)$ a Smale space. Suppose that there exist correspondences
\[
\begin{tikzpicture}
\Corr{0}{0}{(X,\varphi)}{\pi_{u1}}{(M_1,\mu_1)}{\pi_{s1}}{(\Sigma,\sigma)}
\node at (4.35,1) { and };
\Corr{6}{0}{(\Sigma,\sigma)}{\pi_{u2}}{(M_2,\mu_2)}{\pi_{s2}}{(X,\varphi)}
\end{tikzpicture}
\]
Then $(X,\varphi)$ is a shift of finite type.
\end{lemma}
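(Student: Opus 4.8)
The goal is to show that $(X,\varphi)$ is totally disconnected, since by \cite[Theorem 2.2.8]{put} every totally disconnected Smale space is conjugate to a shift of finite type. The plan is to use the two correspondences to control the stable and unstable directions of $(X,\varphi)$ separately: the correspondence through $(M_2,\mu_2)$ will force the stable sets of $X$ to be totally disconnected, while the correspondence through $(M_1,\mu_1)$ will do the same for the unstable sets. Recombining the two directions via the local product structure of a Smale space then yields total disconnectedness of $X$.

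First I would show that the intermediate spaces $(M_1,\mu_1)$ and $(M_2,\mu_2)$ are themselves shifts of finite type. Consider $\pi_{s1}\colon (M_1,\mu_1)\to(\Sigma,\sigma)$. It is s-bijective, hence finite-to-one by \cite[Theorem 2.5.3]{put}, and it is a factor map between compact metric spaces, so it is a closed surjection with finite fibres. Since $\Sigma$ is a shift of finite type it is totally disconnected, so $\dim\Sigma=0$. The Hurewicz dimension-raising theorem (a closed surjection with $0$-dimensional fibres does not raise covering dimension) then gives $\dim M_1\le\dim\Sigma=0$, so $M_1$ is totally disconnected and hence a shift of finite type. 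The identical argument applied to the u-bijective factor map $\pi_{u2}\colon(M_2,\mu_2)\to(\Sigma,\sigma)$ shows that $(M_2,\mu_2)$ is also totally disconnected.

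Next I would transfer this to $X$ one hyperbolic direction at a time. Here the essential point is that a u-bijective map restricts to a homeomorphism between (local) unstable sets and an s-bijective map restricts to a homeomorphism between (local) stable sets (see the discussion of s- and u-bijective maps in Section \ref{sec:mapsonSmale}). Applying this to $\pi_{u1}\colon(M_1,\mu_1)\to(X,\varphi)$: for each $m\in M_1$ the local unstable set $M_1^u(m,\ep)$ is totally disconnected, being a subspace of the totally disconnected space $M_1$, and it is carried homeomorphically onto a local unstable set of $\pi_{u1}(m)$ in $X$; since $\pi_{u1}$ is surjective this shows every local unstable set of $X$ is totally disconnected. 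Symmetrically, using that $M_2$ is totally disconnected and that $\pi_{s2}\colon(M_2,\mu_2)\to(X,\varphi)$ restricts to homeomorphisms of local stable sets, every local stable set of $X$ is totally disconnected. Finally, the bracket map gives a homeomorphism from $X^s(x,\ep)\times X^u(x,\ep)$ onto a neighbourhood of $x$; a product of two totally disconnected spaces is totally disconnected, so each point of $X$ has a totally disconnected neighbourhood, and a compact space that is locally totally disconnected is totally disconnected. Thus $(X,\varphi)$ is a shift of finite type.

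The step I expect to require the most care is the transfer to $X$. The temptation is to argue directly from $\dim M_1=0$ and the finite-to-one factor map $\pi_{u1}\colon M_1\to X$, but this fails: a finite-to-one surjection can raise dimension (a $0$-dimensional space can surject onto $[0,1]$), so $\dim M_1=0$ alone says nothing about $\dim X$. What rescues the argument is precisely that $\pi_{u1}$ is a \emph{homeomorphism} on unstable sets (and $\pi_{s2}$ on stable sets), so total disconnectedness is inherited in each direction before the local product structure reassembles it. The only genuinely technical matter is point-set: one must ensure that the relevant (un)stable sets carry compatible topologies, which is why I work with the local (un)stable sets appearing in the local product structure, where the intrinsic and subspace topologies agree.
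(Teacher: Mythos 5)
Your proof is correct and follows essentially the same route as the paper's: first deduce that $M_1$ and $M_2$ are totally disconnected from the finite-to-one factor maps $\pi_{s1}$ and $\pi_{u2}$ onto $\Sigma$, then use u-bijectivity of $\pi_{u1}$ and s-bijectivity of $\pi_{s2}$ to conclude that $(X,\varphi)$ has totally disconnected unstable and stable sets, and finish via the local product structure and \cite[Theorem 2.2.8]{put}. One cosmetic correction: the inequality you need, $\dim M_1 \le \dim \Sigma$ for a closed finite-to-one surjection $M_1 \to \Sigma$, is the Hurewicz dimension-\emph{lowering} mapping theorem (\cite[Theorem VI 7]{HurWal}, the result the paper itself invokes in Section \ref{sec:dimension}), not the dimension-raising theorem --- as your own closing paragraph correctly observes, such a map can raise the dimension of the image, so the parenthetical description should be fixed even though the inequality you apply is the right one.
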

\begin{proof}
The fact that $\Sigma$ is totally disconnected and $\pi_{s1}$ is a finite-to-one factor map from $(M_1, \mu_1)$ to $(\Sigma, \sigma)$ imply that $M_1$ is also totally disconnected. Similarly, $M_2$ is also totally disconnected. Then \cite[Theorem 2.2.8]{put} implies that both $(M_1, \mu_1)$ and $(M_2, \mu_2)$ are shifts of finite type. Furthermore, the fact that $\pi_{u1}$ is u-bijective and $\pi_{s2}$ is s-bijective imply that $(X,\varphi)$ has totally disconnected unstable and stable sets; hence it is also a shift of finite type.
\end{proof}

\begin{cor}\label{equivtoSFTimpliesSFT}
Let $(\Sigma, \sigma)$ be a shift of finite type and $(X,\varphi)$ a Smale space. If $(\Sigma, \sigma)$ and $(X, \varphi)$ are equivalent using any of the four notions of equivalence considered in the statement of Proposition \ref{relEquSmale}, then $(X,\varphi)$ is a shift of finite type.
\end{cor}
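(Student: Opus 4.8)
The plan is to reduce the statement directly to Lemma \ref{mustBeShiftLemma}, whose hypothesis is precisely the existence of a pair of correspondences, one from $(X,\varphi)$ to $(\Sigma,\sigma)$ and one from $(\Sigma,\sigma)$ to $(X,\varphi)$. The essential observation is that all four notions of equivalence in Proposition \ref{relEquSmale} are defined, via Definition \ref{equivSmaleSpaceLevel}, through the existence of an \emph{invertible} correspondence, and each of the four notions of invertibility (Definitions \ref{strong_inv} and \ref{weak_inv}) explicitly posits both a correspondence and an inverse correspondence pointing in the opposite direction. Thus the real work is already packaged in the lemma, and the corollary is a matter of unwinding the definitions.

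Concretely, I would argue as follows. Suppose $(\Sigma,\sigma)$ and $(X,\varphi)$ are equivalent under one of $\sim_{strong}$, $\sim_{H}$, $\sim_{Rat}$, or $\sim_{Rat\text{-}H}$. In every case this means, by Definition \ref{equivSmaleSpaceLevel}, that there is a correspondence from $(\Sigma,\sigma)$ to $(X,\varphi)$ which is invertible in the relevant sense. Reading off the appropriate invertibility definition, strong and rational invertibility from Definition \ref{strong_inv}, and $H$- and rational-$H$-invertibility from Definition \ref{weak_inv}, each hypothesis asserts the existence of a second correspondence from $(X,\varphi)$ to $(\Sigma,\sigma)$ (the inverse) such that suitable compositions behave like identities. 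For the present purpose I only need the bare existence of that inverse correspondence, not any property of the compositions, so the four cases collapse to a single extraction step.

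Having produced a correspondence from $(X,\varphi)$ to $(\Sigma,\sigma)$ and a correspondence from $(\Sigma,\sigma)$ to $(X,\varphi)$, I would then invoke Lemma \ref{mustBeShiftLemma} verbatim, with the shift of finite type in that lemma taken to be $(\Sigma,\sigma)$. Its conclusion is exactly that $(X,\varphi)$ is a shift of finite type, which is the claim. I expect no genuine obstacle here: all of the analytic content, namely that total disconnectedness of $\Sigma$ is transported to the mediating Smale spaces through the finite-to-one factor maps $\pi_{s1}$ and $\pi_{s2}$, that \cite[Theorem 2.2.8]{put} then makes those mediating spaces shifts of finite type, and that $u$-bijectivity of $\pi_{u1}$ together with $s$-bijectivity of $\pi_{s2}$ force both the stable and unstable sets of $X$ to be totally disconnected, is already absorbed into Lemma \ref{mustBeShiftLemma}. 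The only thing to verify is the uniform bookkeeping that each of the four invertibility notions supplies correspondences in both directions, and this is immediate from Definitions \ref{strong_inv} and \ref{weak_inv}.
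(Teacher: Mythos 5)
Your proposal is correct and follows exactly the paper's route: the paper's own proof simply states that the result ``follows immediately from Lemma \ref{mustBeShiftLemma}'', leaving implicit precisely the definitional unwinding you spell out (each of the four invertibility notions supplies correspondences in both directions). Your extra detail is accurate but adds nothing beyond what the paper compresses into its one-line proof.
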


\begin{proof}
The result follows immediately from Lemma \ref{mustBeShiftLemma}
\end{proof}

The following theorem justifies our claim that $H$-equivalence of Smale spaces generalizes shift equivalence for shifts of finite type.

\begin{thm} \label{HequiImpliesShiftSFT}
Let $(\Sigma_A, \sigma_A)$ be a shift of finite type and $(X,\varphi)$ a Smale space. Then $(\Sigma_A, \sigma_A)$ is $H$-equivalent to $(X,\varphi)$ if and only if $(X,\varphi)$ is a shift of finite type and $(\Sigma_A, \sigma_A)$ and $(X,\varphi)$ are shift equivalent.
\end{thm}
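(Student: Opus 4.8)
The plan is to translate $H$-equivalence of Smale spaces into an isomorphism of dimension groups carrying their order and shift automorphism, and then to invoke Krieger's theorem (see \cite[Section 7.5]{LM}), which asserts that two shifts of finite type are shift equivalent over $\Z^+$ precisely when their dimension groups are isomorphic as ordered groups equipped with the automorphism induced by the shift. Recall that for a shift of finite type $H^s$ and $H^u$ are exactly Krieger's dimension groups $D^s$ and $D^u$.

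\emph{Forward direction.} Suppose $(\Sigma_A,\sigma_A) \sim_H (X,\varphi)$. Since $\sim_H$ is one of the four equivalences of Proposition \ref{relEquSmale}, Corollary \ref{equivtoSFTimpliesSFT} shows that $(X,\varphi)$ is a shift of finite type, say $(X,\varphi)=(\Sigma_B,\sigma)$. Fix an $H$-invertible correspondence from $(\Sigma_A,\sigma_A)$ to $(\Sigma_B,\sigma)$ together with its $H$-inverse. Writing $\alpha := \pi_{s1}^s \circ \pi_{u1}^{s*}$ and $\beta := \pi_{s2}^s \circ \pi_{u2}^{s*}$ for the maps the two correspondences induce on $D^s$, the first and third identities of Definition \ref{weak_inv} give $\beta\circ\alpha = \id_{D^s(\Sigma_A)}$ and $\alpha\circ\beta = \id_{D^s(\Sigma_B)}$, so $\alpha$ is a group isomorphism. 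Because $\pi_{s}$ is s-bijective and $\pi_{u}$ is u-bijective in each correspondence, Proposition \ref{ResOrdAndAutSFT} tells us that both $\pi_s^s$ and $\pi_u^{s*}$, and hence $\alpha$ and $\beta=\alpha^{-1}$, preserve the order structure and intertwine the shift automorphisms. Thus $\alpha$ is an isomorphism of dimension triples, and Krieger's theorem yields that $A$ and $B$ are shift equivalent.

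\emph{Reverse direction.} Assume $(X,\varphi)=(\Sigma_B,\sigma)$ is a shift of finite type shift equivalent to $(\Sigma_A,\sigma_A)$, witnessed by nonnegative integer matrices $R,S$ and a lag $\ell\ge 1$ with $SA=BS$, $AR=RB$, $A^\ell=RS$ and $B^\ell=SR$; as our graphs have no sinks or sources, $A^\ell\neq 0$, so $R,S\neq 0$. Applying Theorem \ref{corConFromMatrix} to $S$ produces a correspondence $C_1$ from $(\Sigma_A,\sigma_A)$ to $(\Sigma_B,\sigma)$ inducing $S_*$ on $D^s$ and $S^*$ on $D^u$, and applying it to $R$ produces a correspondence $C_2$ from $(\Sigma_B,\sigma)$ to $(\Sigma_A,\sigma_A)$ inducing $R_*$ and $R^*$. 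Using the composition formula for correspondences, following $C_1$ by $C_2$ induces $R_*\circ S_* = (A^\ell)_*$ on $D^s(\Sigma_A)$ and, by the transposed computation, $((A^t)^\ell)_*$ on $D^u(\Sigma_A)$.

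The main obstacle is precisely that these composites equal the $\ell$-th power of the shift automorphism rather than the identity, so $C_1$ and $C_2$ miss being $H$-inverse by exactly the lag. I would absorb the lag using the dynamics: by Theorem \ref{StrInvIfAndOnlyIfConj} the conjugacy $\sigma_A^{-\ell}$ on $(\Sigma_A,\sigma_A)$ determines a (strongly invertible) correspondence $D$ from $(\Sigma_A,\sigma_A)$ to itself, and the shift acts on both dimension groups as multiplication by the adjacency matrix, so $D$ induces $(A^\ell)_*^{-1}$ on $D^s$ and $((A^t)^\ell)_*^{-1}$ on $D^u$ simultaneously. Replacing $C_1$ by the composite $C_1\circ D$ (first $D$, then $C_1$), a direct check using $AR=RB$ and $SA=BS$ (so that $(A^\ell)_*^{-1}R_* = R_*(B^\ell)_*^{-1}$, and likewise on $D^u$) shows that following $C_1\circ D$ by $C_2$ induces $\id_{D^s(\Sigma_A)}$ and $\id_{D^u(\Sigma_A)}$, while following $C_2$ by $C_1\circ D$ induces $\id_{D^s(\Sigma_B)}$ and $\id_{D^u(\Sigma_B)}$; these are the four identities of Definition \ref{weak_inv}. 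Hence $C_1\circ D$ is an $H$-invertible correspondence with $H$-inverse $C_2$, giving $(\Sigma_A,\sigma_A)\sim_H(X,\varphi)$. The one point demanding care is that a single power of the shift conjugacy corrects the lag on $H^s$ and $H^u$ at once; this is what reconciles the stable and unstable sign conventions and is the crux of the construction.
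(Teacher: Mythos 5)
Your forward direction is exactly the paper's argument (Corollary \ref{equivtoSFTimpliesSFT}, then Proposition \ref{ResOrdAndAutSFT} together with Krieger's theorem \cite[Theorem 7.5.8]{LM}), and your reverse direction follows the paper's strategy too: build correspondences from $S$ and $R$ via Theorem \ref{corConFromMatrix} and absorb the lag by composing with the correspondence coming from a power of the shift. However, the lag-absorption step --- which you yourself flag as the crux --- fails as written, because of the power you chose. Under the paper's conventions it is \emph{not} true that the shift acts on $D^s$ as multiplication by the adjacency matrix: the paper states (in the table of Section \ref{sec:implications}, in the proposition on the eventual range, and in its own computation inside this very proof) that $(\sigma_A^{-1})^s = A_*$, equivalently $(\sigma_A^{\ell})^s[v,j] = [v,j+\ell] = (A^\ell)_*^{-1}[v,j]$, while on the unstable side $\sigma_A^u = A^* = (A^t)_*$. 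Moreover, the correspondence determined by a conjugacy $\Phi$ via Theorem \ref{StrInvIfAndOnlyIfConj} has $\Phi$ as its s-bijective leg, so it induces $\Phi^s$ on $H^s$ but $\Phi^{u*} = (\Phi^u)^{-1}$ on $H^u$. Combining these facts, your correspondence $D$ built from $\sigma_A^{-\ell}$ induces $(\sigma_A^{-\ell})^s = (A^\ell)_*$ on $D^s$ and $\left((\sigma_A^{-\ell})^u\right)^{-1} = ((A^t)^\ell)_*$ on $D^u$ --- multiplication in both cases, not the inverses you claim. Consequently, following $C_1 \circ D$ by $C_2$ induces $R_* \circ S_* \circ (A^\ell)_* = (A^{2\ell})_*$ on $D^s(\Sigma_A)$, which is not the identity (e.g.\ on the full $2$-shift it is multiplication by $2^{2\ell}$ on $\Z[\tfrac{1}{2}]$). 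Note also that your claim cannot be rescued by changing conventions: whatever one declares $\sigma_A^s$ and $\sigma_A^u$ to be, the contravariant inversion $\Phi^{u*} = (\Phi^u)^{-1}$ means that under your stated convention the map induced on $D^u$ would still come out as $((A^t)^\ell)_*$, not its inverse.

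The repair is minimal and is exactly what the paper does: use $\sigma_A^{+\ell}$. Then $D$ induces $(\sigma_A^{\ell})^s = (A^\ell)_*^{-1}$ on $D^s$, and --- precisely because of the inversion on the unstable side --- also $\left((\sigma_A^{\ell})^u\right)^{-1} = ((A^t)^\ell)_*^{-1}$ on $D^u$, so a single power of the shift does correct both groups at once, as you anticipated. With this sign fixed, your remaining computations (including the intertwining $(A^\ell)_*^{-1} R_* = R_* (B^\ell)_*^{-1}$ coming from $AR = RB$) go through verbatim and reproduce the paper's verification of the four identities in Definition \ref{weak_inv}. Alternatively, $\sigma_A^{-\ell}$ does work if you place it on the \emph{u-bijective} leg, i.e.\ take $D$ to be the correspondence with $\pi_u = \sigma_A^{-\ell}$ and $\pi_s = \id$; that correspondence is isomorphic to the paper's choice and induces the two inverses you wanted.
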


\begin{proof}
Corollary \ref{equivtoSFTimpliesSFT} implies that if $(\Sigma_A, \sigma_A)$ is $H$-equivalent to $(X,\varphi)$, then $(X,\varphi)$ is a shift of finite type. Thus, it suffices to prove the the theorem in the case that $(X,\varphi)=(\Sigma_B, \sigma)$ is a shift of finite type.

For the forward direction, suppose 
\[
\begin{tikzpicture}
\Corr{0}{0}{(\Sigma_A, \sigma)}{\pi_u}{(M,\mu)}{\pi_s}{(\Sigma_B, \sigma)}
\end{tikzpicture}
\]
is an $H$-invertible correspondence. It follows that $(M, \mu)$ is also a shift of finite type. Moreover, the dimension groups of the two shifts of type are isomorphic; that is, $D^s(\Sigma_A, \sigma) \cong D^s(\Sigma_B, \sigma)$ and $D^u(\Sigma_A, \sigma) \cong D^u(\Sigma_B, \sigma)$. By Proposition \ref{ResOrdAndAutSFT}, this isomorphism preserves the order and automorphism induced from the shift. Then \cite[Theorem 7.5.8]{LM} implies that $(\Sigma_A, \sigma)$ and $(\Sigma_B, \sigma)$ are shift equivalent.

For the reverse direction, suppose $(\Sigma_A, \sigma)$ and $(\Sigma_B, \sigma)$ are shift equivalent (see \cite[Definition 7.3.1]{LM}). By definition, there exist non-negative integer matrices $R$ and $S$ such that $AR = RB$, $SA = BS$, $RS = A^l$, and $SR = B^l$ for some $l\in \N$.
Theorem \ref{corConFromMatrix} implies that there are correspondences
\begin{equation}\label{pic:corresp in 5.15}
\begin{tikzpicture}
\Corr{0}{0}{(\Sigma_A, \sigma)}{\pi_{u}}{(\Sigma, \sigma)}{\pi_{s}}{(\Sigma_B, \sigma)}
\node at (4.35,1) { and };
\Corr{6}{0}{(\Sigma_B, \sigma)}{\pi_{u}'}{(\Sigma', \sigma)}{\pi_{s}'}{(\Sigma_A, \sigma)}
\end{tikzpicture}
\end{equation}
such that 
\[
(\pi_s^s \circ \pi_u^{s*})[v,j] = [Sv,j] \quad \text{ and } \quad (\pi_s')^s \circ (\pi_u')^{s*}[w,j] = [Rw,j].
\]
Now consider the correspondence
\[
\begin{tikzpicture}
\Corr{0}{0}{(\Sigma_A,\sigma)}{id}{(\Sigma_A,\sigma)}{\sigma^l}{(\Sigma_A,\sigma)}
\Corr{3}{0}{}{\pi_{u}}{(\Sigma,\sigma)}{\pi_{s}}{(\Sigma_B,\sigma)}
\Corr{1.5}{1.3}{}{P_1}{(\Sigma_A,\sigma) _{\sigma^l}\hspace{-0.1cm}\times_{\pi_{u}} \hspace{-0.1cm}(\Sigma,\sigma)}{P_2}{}
\end{tikzpicture}
\]
%\[
%\begin{tikzpicture}
%\Corr{0}{0}{(\Sigma_A, \sigma)}{id \circ P_1}{(\Sigma_A, \sigma)_{\sigma^l} \times_{\pi_s}(\Sigma, \sigma)}{\pi_s \circ P_2}{(\Sigma_B, \sigma)}
%\end{tikzpicture}
%\]
This is an $H$-inverse to the correspondence on the right hand side of \eqref{pic:corresp in 5.15} since
\begin{align*}
(\pi_s')^s \circ (\pi_u')^{s*} \circ \pi_s^s \circ \pi_u^{s*} \circ (\sigma^l)^s \circ id^{s*} [v,j] &= (\pi_s')^s \circ (\pi_u')^{s*} \circ \pi_s^s \circ \pi_u^{s*} [v, j+l] \\
&= (\pi_s')^s \circ (\pi_u')^{s*} [Sv,j+l] \\
&= [RSv, j+l] = [A^lv, j+l] \\
&= [v,j]
\end{align*}
and
\begin{align*}
\pi_s^s \circ \pi_u^{s*} \circ (\sigma^l)^s \circ id^{s*} \circ (\pi_s')^s \circ (\pi_u')^{s*} [w,j] &= \pi_s^s \circ \pi_u^{s*} \circ (\sigma^l)^s \circ id^{s*} [Rw,j] \\
&= \pi_s^s \circ \pi_u^{s*} [Rw,j+l] \\
&= [SRw, j+l] = [B^lw,j+l] \\
&= [w,j+l].
\end{align*}
The maps on $H^u(\: \cdot \:)$ behave similarly, and hence the correspondence on the right hand side of \eqref{pic:corresp in 5.15} is a $H$-invertible correspondence between $(\Sigma_A,\sigma)$ and $(\Sigma_B,\sigma)$.
\end{proof}

\begin{example} \label{revImp}
Based on Theorem \ref{HequiImpliesShiftSFT}, the William's conjecture \cite{Wil} that shift equivalence classifies shifts of finite type,  can be reformulated in the language of correspondences as follows: does $H$-equivalence imply strong equivalence? Counterexamples to the William's conjecture are given in \cite{KRred} and in the irreducible case in \cite{KRirr}. Similarly, one can ask whether Rat-$H$-equivalence implies $H$-equivalence? However, this turns out to be false as well; explicit examples are not difficult to construct.
\end{example}

\subsection{Existence equivalences}\label{equiv_exist}

In the previous section, we saw that each notion of invertibility for correspondences leads to a notion of equivalence for Smale spaces. However, following the general idea of finite equivalence and almost conjugacy, one can define an equivalence relation based on the existence of correspondences.

\begin{definition}\label{def:cor-equiv}
We say that two Smale spaces $(X,\varphi)$ and $(Y,\psi)$ are cor-equivalent if there exist correspondences:
\[
\begin{tikzpicture}
\Corr{0}{0}{(X,\varphi)}{\pi_u}{(M,\mu)}{\pi_s}{(Y,\psi)}
\node at (4.35,1) { and };
\Corr{6}{0}{(Y,\psi)}{\pi_u'}{(M',\mu')}{\pi_s'}{(X,\varphi)}
\end{tikzpicture}
\]
If this condition holds, we write $(X, \varphi) \sim_{cor} (Y,\psi)$.
\end{definition}

The reader should note that we make no assumption on the maps on homology induced from the correspondences in the definition of cor-equivalence and that cor-equivalence implies finite equivalence by \cite[Theorem 2.5.3]{put}.

%\begin{prop}
%
%\end{prop}

The proof of the next result is similar to that of Theorem \ref{HequiImpliesShiftSFT} and is omitted; we note that both Theorem \ref{838LMinCor} and Lemma \ref{mustBeShiftLemma} are relevant.

\begin{prop}
The relation $\sim_{cor}$ is an equivalence relation. Moreover, if $(\Sigma, \sigma)$ is a shift of finite type and $(X,\varphi)$ is a Smale space, then $(\Sigma, \sigma) \sim_{cor} (X,\varphi)$ if and only if $(X,\varphi)$ is a shift of finite type and $(\Sigma, \sigma)$ and $(X, \varphi)$ are finitely equivalent.
\end{prop}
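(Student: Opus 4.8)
The plan is to prove the final proposition in three parts, mirroring the structure of the statement: first that $\sim_{cor}$ is an equivalence relation, and then the two directions of the biconditional characterising cor-equivalence with a shift of finite type. For the equivalence relation, reflexivity is immediate since the identity correspondence (take $(M,\mu)=(X,\varphi)$ with both maps the identity) is a correspondence from $(X,\varphi)$ to itself. Symmetry is built directly into Definition \ref{def:cor-equiv}, which already demands correspondences in both directions. Transitivity is the only part requiring work: given $(X,\varphi)\sim_{cor}(Y,\psi)$ and $(Y,\psi)\sim_{cor}(Z,\eta)$, I would compose the relevant correspondences using the composition operation defined in the Definition/Lemma following Definition \ref{correspondence}, which produces, via the fibre product, a correspondence from $(X,\varphi)$ to $(Z,\eta)$ and one from $(Z,\eta)$ to $(X,\varphi)$; the fact that these compositions are again correspondences is guaranteed by Remark \ref{nonWanCon} (closure of the class under fibre products over s- or u-bijective maps).

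For the biconditional, the forward direction combines two earlier results directly. Suppose $(\Sigma,\sigma)\sim_{cor}(X,\varphi)$, so there are correspondences in both directions between $(\Sigma,\sigma)$ and $(X,\varphi)$. Then Lemma \ref{mustBeShiftLemma} applies verbatim to conclude that $(X,\varphi)$ is a shift of finite type; its hypotheses (existence of correspondences in both directions with one side a shift of finite type) are exactly what cor-equivalence supplies. Once both spaces are known to be shifts of finite type, the existence of a correspondence between irreducible shifts of finite type is equivalent to finite equivalence by Theorem \ref{838LMinCor} (the equivalence of items (1) and (4)), giving that $(\Sigma,\sigma)$ and $(X,\varphi)$ are finitely equivalent.

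For the reverse direction, assume $(X,\varphi)$ is a shift of finite type, say $(\Sigma',\sigma)$, and that $(\Sigma,\sigma)$ and $(\Sigma',\sigma)$ are finitely equivalent. Again by Theorem \ref{838LMinCor}, finite equivalence of these shifts of finite type yields the existence of a correspondence from $(\Sigma,\sigma)$ to $(\Sigma',\sigma)$. The one subtlety is that cor-equivalence is symmetric and requires a correspondence in \emph{both} directions; since finite equivalence is itself symmetric (Proposition \ref{finiteEquIsAnEqu}), applying Theorem \ref{838LMinCor} again in the other direction produces the correspondence from $(\Sigma',\sigma)$ to $(\Sigma,\sigma)$, completing the verification that $(\Sigma,\sigma)\sim_{cor}(X,\varphi)$.

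The step I expect to be the main obstacle is the implicit irreducibility hypothesis in Theorem \ref{838LMinCor}, which is stated only for irreducible graphs, whereas a general shift of finite type or Smale space need not be irreducible. The cleanest remedy is to invoke Smale's decomposition to reduce to irreducible components, matching them up by equal entropy as in the proof of Proposition \ref{finiteEqualEntropy}; since the proposition itself asserts the result is obtained ``similarly to the proof of Theorem \ref{HequiImpliesShiftSFT}'' and that proof handles exactly this reduction via Corollary \ref{equivtoSFTimpliesSFT} and the shift-equivalence machinery, I would follow that template, replacing the appeals to shift equivalence with the weaker finite-equivalence statements. The remaining bookkeeping — verifying that the composed correspondences genuinely land in the correct category and that the totally-disconnected conclusions of Lemma \ref{mustBeShiftLemma} propagate correctly — is routine given the cited results.
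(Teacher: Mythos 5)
Your proposal is correct and follows essentially the same route the paper intends: the paper omits the proof, noting only that it parallels Theorem \ref{HequiImpliesShiftSFT} and rests on Lemma \ref{mustBeShiftLemma} (to force $(X,\varphi)$ to be a shift of finite type) and Theorem \ref{838LMinCor} (to pass between finite equivalence and existence of correspondences), which is exactly your structure, with the equivalence-relation part handled by identity correspondences, the symmetry built into Definition \ref{def:cor-equiv}, and fibre-product composition for transitivity. The irreducibility caveat you flag in Theorem \ref{838LMinCor} is real but is glossed over by the paper itself; note also that your forward direction can be shortened, since a correspondence is already a finite equivalence by \cite[Theorem 2.5.3]{put} without any appeal to irreducibility.
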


\begin{example}
The $2^{\infty}$-solenoid, see for example \cite[Section 7.3]{put}, and the full two shift are finitely equivalent, but not cor-equivalent.
\end{example}

We also have a relation which is related to almost conjugacy. 

\begin{definition}
We say that two irreducible Smale spaces, $(X,\varphi)$ and $(Y,\psi)$, are Almost Cor Conjugate (i.e., ACC-equivalent) if there exist correspondences:
\[
\begin{tikzpicture}
\Corr{0}{0}{(X,\varphi)}{\pi_u}{(M,\mu)}{\pi_s}{(Y,\psi)}
\node at (4.35,1) { and };
\Corr{6}{0}{(Y,\psi)}{\pi_u'}{(M',\mu')}{\pi_s'}{(X,\varphi)}
\end{tikzpicture}
\]
where each of the maps, $\pi_u$, $\pi_s$, $\pi_u'$, and $\pi_s'$, are almost one-to-one. If this condition holds, we write $(X, \varphi) \sim_{ACC} (Y,\psi)$.
\end{definition}

Again, the proof of the next result is similar to that of Theorem \ref{HequiImpliesShiftSFT} and is omitted; in this case \cite[Theorem 9.3.2]{LM} is relevant.

\begin{prop}
The relation $\sim_{ACC}$ is an equivalence relation. Moreover, if $(\Sigma, \sigma)$ is an irreducible shift of finite type and $(X,\varphi)$ is an irreducible Smale space, then $(\Sigma, \sigma) \sim_{ACC} (X,\varphi)$ if and only if $(X,\varphi)$ is a shift of finite type and $(\Sigma, \sigma)$ and $(X, \varphi)$ are almost conjugate.
\end{prop}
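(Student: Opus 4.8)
The plan is to mirror the proof of Theorem~\ref{HequiImpliesShiftSFT}: first verify that $\sim_{ACC}$ is an equivalence relation, then reduce the stated characterization to two irreducible shifts of finite type and feed in \cite[Theorem 9.3.2]{LM}. For the equivalence-relation assertion, reflexivity is witnessed by the identity correspondence, whose two legs are the identity and hence almost one-to-one, and symmetry is built into the definition of ACC-equivalence. For transitivity I would compose two ACC-correspondences using the fibre-product composition from Section~\ref{sec:correspondences}, obtaining legs $\pi_u \circ P_1$ and $\pi_s' \circ P_2$. These are u-bijective and s-bijective respectively, since the projections $P_1,P_2$ inherit u-/s-bijectivity from \cite[Theorem 2.5.13]{put} and the relevant bijectivity is preserved under composition. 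For the almost-one-to-one property, note that $P_1^{-1}(m)=\{m\}\times(\pi_u')^{-1}(\pi_s(m))$, a singleton whenever $\pi_s(m)$ lies in the residual set of points with a unique $\pi_u'$-preimage, so $P_1$ and symmetrically $P_2$ are almost one-to-one; and a composite of almost one-to-one factor maps between irreducible Smale spaces is again almost one-to-one. This last fact is exactly the mechanism behind transitivity of almost conjugacy \cite[Theorem 2.17]{AM}, already invoked in Section~\ref{Sec:Finite_equiv}.

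The forward direction of the characterization is the quick one. If $(\Sigma,\sigma)\sim_{ACC}(X,\varphi)$ then, discarding the almost-one-to-one data, we have a cor-equivalence, so Lemma~\ref{mustBeShiftLemma} forces $(X,\varphi)$ to be a shift of finite type. Moreover, by \cite[Theorem 2.5.3]{put} the legs of either correspondence are finite-to-one factor maps, and they are almost one-to-one by hypothesis; a diagram of this kind is precisely an almost conjugacy in the sense of Definition~\ref{almostConj}. Hence $(\Sigma,\sigma)$ and $(X,\varphi)$ are almost conjugate.

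For the reverse direction, write $(X,\varphi)=(\Sigma_B,\sigma)$ and $(\Sigma,\sigma)=(\Sigma_A,\sigma)$, both irreducible shifts of finite type, and assume they are almost conjugate. By \cite[Theorem 9.3.2]{LM} they then have equal entropy and equal period. I would realize the almost conjugacy by a common shift-of-finite-type extension whose two legs are resolving in opposite senses: a u-bijective (right-resolving) almost-one-to-one factor map onto $(\Sigma_A,\sigma)$ and an s-bijective (left-resolving) almost-one-to-one factor map onto $(\Sigma_B,\sigma)$, using that for irreducible shifts of finite type resolving maps are bijective on the appropriate local sets by \cite[Theorems 2.5.8 and 2.5.17]{put}. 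This diagram is a correspondence from $(\Sigma_A,\sigma)$ to $(\Sigma_B,\sigma)$ with almost-one-to-one legs, and since ``equal entropy and equal period'' is symmetric in $A$ and $B$, the same construction supplies a correspondence in the opposite direction; together they give $(\Sigma_A,\sigma)\sim_{ACC}(\Sigma_B,\sigma)$.

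I expect the crux to lie in this reverse direction, namely in upgrading the almost conjugacy of \cite[Theorem 9.3.2]{LM} to one realized by a correspondence. The Adler--Marcus theorem is phrased for almost invertible finite-to-one factor codes, whereas a correspondence demands one left-resolving and one right-resolving leg; the point to be checked is thus the resolving refinement of the almost conjugacy theorem --- that equal entropy and equal period yield a common extension with a degree-one right-resolving code to one side and a degree-one left-resolving code to the other. This should follow by combining the resolving finite equivalence underlying \cite[Theorem 8.3.7]{LM} with the almost-invertibility argument of \cite[Chapter 9]{LM}, but it is the step that requires genuine symbolic-dynamics input rather than formal manipulation of correspondences.
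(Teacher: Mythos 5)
Your proposal is correct and is essentially the paper's own (omitted) argument: reduce to shifts of finite type via Lemma \ref{mustBeShiftLemma}, handle the equivalence-relation claim by fibre-product composition, and invoke \cite[Theorem 9.3.2]{LM} for the reverse direction, exactly as in the proof of Theorem \ref{HequiImpliesShiftSFT}. The step you flag as the crux requires no additional symbolic-dynamics work: the ``moreover'' clause of \cite[Theorem 9.3.2]{LM} already asserts that the almost conjugacy between equal-entropy, equal-period irreducible shifts of finite type can be chosen with one leg left-resolving and the other right-resolving, and \cite[Theorems 2.5.8 and 2.5.17]{put} convert these into the s-/u-bijective almost one-to-one legs of the two required correspondences.
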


\begin{example}
The $2^{\infty}$-solenoid and the full two shift are almost conjugate, but not ACC-equivalent.
\end{example}

\subsection{Correspondences and dimension}\label{sec:dimension}
Lemma \ref{mustBeShiftLemma} points to a fundamental, but at this point vague, difference between a finite equivalence and a correspondence: the former does not ``see" dimension, while the latter seems to. More precisely, Bowen's theorem implies that if two Smale spaces are finitely equivalent, then the middle space can be taken to be a shift of finite type; this cannot be done in general for a correspondence. 

Our main goal in this section is to study the implications of a cor-equivalence on the covering dimensions of the various Smale spaces appearing in the equivalence. We make use of several basic facts about covering dimension and direct the reader to \cite{HurWal} for the appropriate background material.  We begin with a general observation about finite-to-one maps. The proof of the next result follows from a theorem of Hurewicz, see for example \cite[Theorem VI 7]{HurWal}.
\begin{lemma}
Let $\pi: (X,\varphi) \rightarrow (Y, \psi)$ be a finite-to-one factor map between Smale spaces. Then, $\dim(X) \le \dim(Y)$. 
\end{lemma}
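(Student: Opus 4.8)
The plan is to invoke the classical theorem of Hurewicz on dimension-lowering maps, which bounds the dimension of the domain in terms of the dimension of the image together with the dimension of the fibres. Specifically, Hurewicz's theorem (see \cite[Theorem VI 7]{HurWal}) states that if $f : X \to Y$ is a closed continuous map between separable metric spaces, then $\dim(X) \le \dim(Y) + \sup_{y \in Y} \dim(f^{-1}\{y\})$. I would apply this with $f = \pi$, so the task reduces to controlling the two ingredients on the right-hand side: the dimension of the fibres, and the applicability of the theorem (closedness of the map).

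First I would verify that the fibres $\pi^{-1}\{y\}$ are zero-dimensional. Since $\pi$ is finite-to-one by hypothesis, each fibre $\pi^{-1}\{y\}$ is a finite set. A finite subset of a metric space is a discrete (hence totally disconnected) compact space, and such a space has covering dimension $0$ (the empty fibre has dimension $-1$, but surjectivity of the factor map rules this out). Therefore $\sup_{y \in Y} \dim(\pi^{-1}\{y\}) = 0$, and Hurewicz's inequality collapses to $\dim(X) \le \dim(Y)$, which is exactly the claim.

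Second I would check the hypotheses needed to legitimately apply Hurewicz's theorem. The spaces $X$ and $Y$ are compact metric spaces by the definition of a Smale space (Definition \ref{defSmaSpa}), so in particular they are separable metric, and $\pi$ is a continuous map between compact Hausdorff spaces and is therefore automatically closed. Thus every hypothesis of the Hurewicz theorem is met, and no recurrence or bracket-compatibility properties of the Smale space structure are actually needed here; the result is purely a statement about finite-to-one maps of compacta.

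The main obstacle, such as it is, is locating the precise form of Hurewicz's theorem that applies and confirming its hypotheses rather than any genuine difficulty in the argument. One must be slightly careful that the relevant version bounds $\dim(X)$ by $\dim(Y)$ plus the \emph{supremum} of the fibre dimensions (the dimension-raising versus dimension-lowering formulations differ), and that the formulation in \cite{HurWal} is stated for closed maps of separable metric spaces, which is precisely our situation. Once the correct statement is identified, the proof is immediate: the finiteness of the fibres forces the fibre-dimension term to vanish, yielding $\dim(X) \le \dim(Y)$.
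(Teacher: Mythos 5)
Your proof is correct and takes exactly the same route as the paper, which simply cites Hurewicz's theorem \cite[Theorem VI 7]{HurWal}; you have supplied the routine verifications (finite fibres are zero-dimensional, compactness makes $\pi$ closed, Smale spaces are separable metric) that the paper leaves implicit.
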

\begin{cor} \label{finEquAndDim}
If 
\[
\begin{tikzpicture}
\Corr{0}{0}{(X,\varphi)}{\pi_X}{(M,\mu)}{\pi_Y}{(Y,\psi)}
\end{tikzpicture}
\]
is a finite equivalence of Smale spaces, then $\dim(M) \le \min \{ \dim(X), \dim(Y)\}$.
\end{cor}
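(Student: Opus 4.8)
The plan is to apply the preceding lemma twice and combine the two resulting inequalities. Recall that a finite equivalence consists of a Smale space $(M,\mu)$ together with finite-to-one factor maps $\pi_X : (M,\mu) \to (X,\varphi)$ and $\pi_Y : (M,\mu) \to (Y,\psi)$. The lemma just established states that a finite-to-one factor map from one Smale space onto another cannot increase covering dimension; that is, the domain has dimension at most that of the codomain.

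First I would apply the lemma to the map $\pi_X : (M,\mu) \to (X,\varphi)$. Since $\pi_X$ is a finite-to-one factor map, the lemma gives $\dim(M) \le \dim(X)$. Next I would apply the lemma to the map $\pi_Y : (M,\mu) \to (Y,\psi)$, which is likewise a finite-to-one factor map, to obtain $\dim(M) \le \dim(Y)$. Combining these two inequalities yields
\[
\dim(M) \le \min\{\dim(X), \dim(Y)\},
\]
which is exactly the claimed conclusion.

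There is no genuine obstacle here: the corollary is an immediate consequence of the lemma applied to each of the two legs of the finite equivalence. The only point worth emphasizing is that both $\pi_X$ and $\pi_Y$ are assumed to be finite-to-one factor maps by the definition of a finite equivalence (Definition \ref{def:finite-equiv}), so the hypotheses of the lemma are met in each case, and the single subtlety is simply that the two upper bounds on $\dim(M)$ may be combined into their minimum. Thus the proof is a direct two-line deduction from the previous lemma.
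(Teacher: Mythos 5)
Your proof is correct and matches the paper's approach exactly: the paper states this corollary without proof precisely because it is the immediate two-fold application of the preceding lemma (Hurewicz's theorem on finite-to-one factor maps) to $\pi_X$ and $\pi_Y$, yielding $\dim(M) \le \dim(X)$ and $\dim(M) \le \dim(Y)$, hence the minimum. Nothing is missing.
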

We move to the case of cor-equivalence. As such let
\[
\begin{tikzpicture}
\Corr{0}{0}{(X,\varphi)}{\pi_u}{(M,\mu)}{\pi_s}{(Y,\psi)}
\node at (4.35,1) { and };
\Corr{6}{0}{(Y,\psi)}{\pi_u'}{(M',\mu')}{\pi_s'}{(X,\varphi)}
\end{tikzpicture}
\]
be two correspondences (i.e., an explicit cor-equivalence between $(X,\varphi)$ and $(Y, \psi)$).

To simplify the discussion, we assume that $\dim (X^s(x))$ and $\dim (X^u(x))$ are independent of $x\in X$ and likewise $\dim (Y^s(y))$ and $\dim (Y^u(y))$ are independent of $y\in Y$ (as we will see in Remark \ref{dimIndep}, this assumption holds quite broadly). In particular, this assumption implies that for each $x\in X$ and $y\in Y$, 
\begin{equation}\label{consDimXY}
\dim(X) = \dim (X^s(x)) + \dim (X^u(x)) \hbox{ and } \dim(Y)= \dim(Y^s(y)) + \dim(Y^u(y)).
\end{equation}
In addition, the following inequality comes from basic properties of dimension and the definitions of $s$- and $u$-bijective maps: for any $m\in M$,
\[
\dim(M) \ge \dim(M^s(m)) + \dim(M^u(m)) = \dim( Y^s(\pi_s(m))) + \dim( X^u(\pi_u(m))).
\]
Likewise, for any $m'\in M'$,
\[
\dim(M') \ge \dim((M')^s(m')) + \dim((M')^u(m'))=  \dim( Y^u(\pi'_u(m'))) + \dim( X^s(\pi'_u(m'))).
\]
Using these two inequalities and Equation \eqref{consDimXY}, we obtain
\[ \dim(M) + \dim(M') \ge \dim(X)+ \dim(Y) \ge \min \{ 2\dim(X), 2\dim(Y)\}.
\]
However, Corollary \ref{finEquAndDim} implies that the reverse inequality holds as well. It follows that
\begin{align*}
\dim(M) + \dim(M') &=  \min \{ 2\dim(X), 2\dim(Y)\} \, \hbox{ and } \\
\dim(X)&=\dim(Y)=\dim(M)=\dim(M'),
\end{align*}
where the latter equality uses $\dim(M) + \dim(M') \ge \dim(X)+ \dim(Y)$ and Corollary \ref{finEquAndDim}. 

Furthermore, since there exists $m\in M$ such that $\dim(M)= \dim(M^s(m)) + \dim(M^u(m))$, and using properties of $s$- and $u$-bijective maps, we have $x \in X$ and $y\in Y$ such that $\dim(M) = \dim(X^u(x)) + \dim(Y^s(y))$. Recalling that 
$\dim(X) = \dim(X^u(x)) + \dim(X^s(x))$, for any $x \in X$, and that $\dim(Y^s(y))$ is independent of $y\in Y$, we have that $\dim(X^s(x))=\dim(Y^s(y))$ for each $x\in X$ and $y\in Y$. Likewise, $\dim(X^u(x))=\dim(Y^u(y))$ for each $x\in X$ and $y\in Y$. We summarize this discussion in a theorem:
\begin{thm}\label{thm:cor to dim}
Let $(X,\varphi)$ and $(Y,\psi)$ be Smale spaces and let
\[
\begin{tikzpicture}
\Corr{0}{0}{(X,\varphi)}{\pi_u}{(M,\mu)}{\pi_s}{(Y,\psi)}
\node at (4.35,1) { and };
\Corr{6}{0}{(Y,\psi)}{\pi_u'}{(M',\mu')}{\pi_s'}{(X,\varphi)}
\end{tikzpicture}
\]
be correspondences. Furthermore, assume that $\dim (X^s(x))$ and $\dim (X^u(x))$ are independent of $x\in X$ and likewise $\dim (Y^s(y))$ and $\dim (Y^u(y))$ are independent of $y\in Y$. Then
\begin{enumerate}
\item $\dim(X)=\dim(Y)=\dim(M)=\dim(M')$ and
\item For each $x\in X$ and $y\in Y$, $\dim(X^s(x))=\dim(Y^s(y))$ and $\dim(X^u(x))=\dim(Y^u(y))$.
\end{enumerate}
\end{thm}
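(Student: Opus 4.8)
The plan is to pin down $\dim(M)+\dim(M')$ by squeezing it between matching upper and lower bounds, and then read off both conclusions from the resulting chain of equalities. The upper bound is immediate: since $s$- and $u$-bijective maps are finite-to-one by \cite[Theorem 2.5.3]{put}, each of the two correspondences is in particular a finite equivalence, so Corollary \ref{finEquAndDim} gives $\dim(M)\le\min\{\dim(X),\dim(Y)\}$ and $\dim(M')\le\min\{\dim(X),\dim(Y)\}$, hence $\dim(M)+\dim(M')\le\min\{2\dim(X),2\dim(Y)\}$.

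For the lower bound I would use the local product structure together with the defining property of the maps, exactly as in the discussion preceding the statement. For any $m\in M$,
\[
\dim(M)\ge\dim(M^s(m))+\dim(M^u(m))=\dim(Y^s(\pi_s(m)))+\dim(X^u(\pi_u(m))),
\]
the equality coming from the fact that $\pi_s$ restricts to a homeomorphism $M^s(m)\to Y^s(\pi_s(m))$ and $\pi_u$ to a homeomorphism $M^u(m)\to X^u(\pi_u(m))$. The symmetric estimate for $M'$ (interchanging the roles of $X,Y$ and of $s,u$) gives $\dim(M')\ge\dim(X^s)+\dim(Y^u)$. Invoking the constant-dimension hypothesis together with Equation \eqref{consDimXY} to add the two estimates yields $\dim(M)+\dim(M')\ge\dim(X)+\dim(Y)$.

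Combining the bounds forces everything to collapse. Since $\dim(X)+\dim(Y)\ge\min\{2\dim(X),2\dim(Y)\}$ trivially, the chain
\[
\min\{2\dim(X),2\dim(Y)\}\le\dim(X)+\dim(Y)\le\dim(M)+\dim(M')\le\min\{2\dim(X),2\dim(Y)\}
\]
consists entirely of equalities. In particular $\dim(X)+\dim(Y)=2\min\{\dim(X),\dim(Y)\}$, which is possible only when $\dim(X)=\dim(Y)$; the upper bounds then pin $\dim(M)=\dim(M')=\dim(X)=\dim(Y)$, giving part (1). For part (2) I would use the local product structure to select $m\in M$ with $\dim(M)=\dim(M^s(m))+\dim(M^u(m))$; rewriting this as $\dim(M)=\dim(Y^s)+\dim(X^u)$ and comparing with $\dim(M)=\dim(X)=\dim(X^s)+\dim(X^u)$ gives $\dim(X^s)=\dim(Y^s)$, after which $\dim(X)=\dim(Y)$ forces $\dim(X^u)=\dim(Y^u)$. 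The constant-dimension hypothesis then promotes these to the stated equalities for every $x\in X$ and $y\in Y$.

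The step that carries the real mathematical weight is the lower bound $\dim(M)\ge\dim(M^s(m))+\dim(M^u(m))$. Additivity of covering dimension across a product can fail in general (Pontryagin's examples show $\dim(A\times B)<\dim A+\dim B$ is possible), so this inequality is not a formal product identity: it rests genuinely on the local product structure of Smale spaces and on the monotonicity and sum theorems from \cite{HurWal}. Once both dimension bounds are in hand, the remainder is a routine manipulation of inequalities.
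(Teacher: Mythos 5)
Your proposal is correct and takes essentially the same route as the paper: the paper's own proof is the discussion preceding the theorem, which squeezes $\dim(M)+\dim(M')$ between the upper bound from Corollary \ref{finEquAndDim} and the lower bound $\dim(M)\ge\dim(M^s(m))+\dim(M^u(m))=\dim(Y^s(\pi_s(m)))+\dim(X^u(\pi_u(m)))$ (and its analogue for $M'$), then derives parts (1) and (2) by exactly the manipulations you describe. Your closing remark correctly identifies the delicate step---the paper likewise asserts this inequality only via ``basic properties of dimension'' and the definitions of $s$- and $u$-bijective maps, so your treatment matches the paper's level of rigor there as well.
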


\begin{remark}\label{dimIndep}
The assumption of the independence of the dimension of the stable/unstable sets holds quite broadly.  In particular, for a shift of finite type all of the stable and unstable sets have dimension zero.  Moreover, the following proposition shows that the assumption holds for any irreducible Smale space.  In general, however, the assumption does not hold for a non-wandering Smale space. For example, consider the disjoint union of a shift of finite type and a hyperbolic toral automorphism.
\end{remark}

\begin{prop}
Let $(X,\varphi)$ be an irreducible Smale space.  Then $dim(X^s(x))$ (respectively $dim(X^u(x))$) is independent of $x \in X$.
\end{prop}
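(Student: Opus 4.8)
The plan is to avoid comparing the full global stable sets directly, and instead to study the locally defined quantity $D(x):=\dim X^s(x,\ep)$ for a fixed small $\ep$, show that it is both locally constant on $X$ and invariant under $\varphi$, invoke irreducibility to force it to be globally constant, and finally recover $\dim X^s(x)$ from the values $D(\varphi^n x)$ by a dimension-sum argument. Throughout I use that, by \cite{put}, each $X^s(x)$ with its intrinsic topology is a locally compact, separable metric space, so the standard covering-dimension machinery of \cite{HurWal} (monotonicity under passage to subspaces, the countable closed sum theorem, and the fact that an open subset of a metric space is an $F_\sigma$) is available.

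First I would establish local constancy of $D$. Fix $x_0$ and use the local product structure: the bracket gives a homeomorphism from $X^u(x_0,\ep)\times X^s(x_0,\ep)$ onto a neighbourhood $N$ of $x_0$, under which a point $y=[a,b]$ satisfies $y\sim_s a$ and $a\sim_u x_0$. Consequently the local stable set of any $y\in N$ is carried to a ``slice'' $\{a\}\times X^s(x_0,\ep)$, which is homeomorphic to $X^s(x_0,\ep)$. Hence $D$ is constant on $N$, so $D$ is locally constant on all of $X$; in particular it takes only finitely many values since $X$ is compact. The inclusion $\varphi(X^s(x,\ep))\subseteq X^s(\varphi x,\lambda \ep)$, together with the fact that $\varphi$ is a homeomorphism carrying local stable sets onto open subsets of local stable sets, shows $D(\varphi x)=D(x)$; thus each level set $\{D=c\}$ is $\varphi$-invariant, and, being a union of the neighbourhoods on which $D$ is locally constant, is clopen in $X$.

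Now irreducibility finishes the argument: topological transitivity forbids a proper nonempty clopen $\varphi$-invariant subset (if $A$ were such a set, then taking $U=A$ and $V=X\setminus A$ would give $\varphi^N(U)\cap V=\varnothing$ for all $N$), so exactly one level set is nonempty and $D\equiv d_0$ is constant. To pass from $D$ to the global stable set, write $X^s(x)=\bigcup_{n\ge 0}\varphi^{-n}\big(X^s(\varphi^n x,\ep)\big)$ as an increasing union of open sets, each homeomorphic via $\varphi^n$ to $X^s(\varphi^n x,\ep)$ and hence of dimension $D(\varphi^n x)=d_0$. Expressing each such open set as a countable union of closed subsets and applying the countable closed sum theorem yields $\dim X^s(x)=d_0$ for every $x$. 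The unstable statement then follows by applying the stable statement to $(X,\varphi^{-1})$, which is again irreducible and whose stable sets are the unstable sets of $(X,\varphi)$.

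Finally, about the difficulties: the genuinely dynamical content is light — it is essentially the single observation that all local stable sets in a product chart are homeomorphic slices — and the real care is needed in the dimension-theoretic bookkeeping. The main obstacle I anticipate is justifying cleanly that $D$ is well defined independently of the (small) radius $\ep$ and that the local-to-global sum formula $\dim X^s(x)=\sup_n \dim X^s(\varphi^n x,\ep)$ holds; both rest on monotonicity and the sum theorem for separable metric spaces, but they must be applied with respect to the intrinsic topology on $X^s(x)$ rather than the subspace topology inherited from $X$.
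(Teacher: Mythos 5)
Your reduction to local constancy of $D(x)=\dim X^s(x,\ep)$ is exactly where the proof breaks down, and the difficulty is not the ``bookkeeping'' you defer to the end --- it is the actual content of the proposition. What the bracket chart at $x_0$ really gives is this: for $y=[a,b]\in N$, the slice $S_a=[a,X^s(x_0,\ep)]$ is homeomorphic to $X^s(x_0,\ep)$, is open in the intrinsic topology of $X^s(y)$, and satisfies $X^s(y,\delta)\subseteq S_a\subseteq X^s(y,\ep')$ for some small $\delta$ depending on $y$ and some $\ep'>\ep$ controlled by the continuity of the bracket. These interleaved inclusions yield $\dim X^s(y,\delta)\le\dim X^s(x_0,\ep)\le\dim X^s(y,\ep')$, i.e.\ inequalities between local dimensions \emph{at different radii}; they do not give $\dim X^s(y,\ep)=\dim X^s(x_0,\ep)$ unless you already know the local dimension is independent of the radius. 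That radius-independence cannot come from monotonicity and the sum theorem: covering dimension of a separable metric space can concentrate on a nowhere dense closed subset, so small neighbourhoods of one point can have strictly smaller dimension than neighbourhoods of a nearby point (think of a closed square with a segment attached at a boundary point: the local dimension is $1$ along the open segment, $2$ on the square, the ``level sets'' are not clopen, and local constancy fails). Since a local product structure alone is perfectly compatible with such behaviour, no purely local argument can prove your constancy claim. The same radius problem infects your invariance step: $\varphi(X^s(x,\ep))\subseteq X^s(\varphi x,\lambda\ep)$ gives $\dim X^s(x,\ep)\le\dim X^s(\varphi x,\ep)$, and applying $\varphi^{-1}$ gives a reverse inequality only after enlarging the radius, so again $D(\varphi x)=D(x)$ is not established. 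In short, the dynamics must enter in a global way, and in your proposal it never does beyond excluding clopen invariant sets.

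The paper's proof shows what the missing dynamical input is. Irreducibility is used to produce a point $x$ with dense forward orbit; the sum theorem produces $x'\in X^s(x)$ at which $D=\dim X^s(x)$ is realized by the fixed-radius set $X^s(x',\ep_X)$; and the contraction axiom makes $U_n=\varphi^n(X^s(x',\ep_X))$ a set of \emph{fixed} dimension $D$ and arbitrarily small diameter. Density of the orbit then lets one bracket-transplant $U_n$ next to an \emph{arbitrary} point $z$, giving $\dim X^s(z)\ge\dim [z,U_n]=D$, while the reverse bracket $[\varphi^n(x'),X^s(z,\ep_X/2)]$ gives $\dim X^s(z,\ep_X/2)\le D$ for every $z$, which the sum theorem upgrades to $\dim X^s(z)\le D$. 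It is this combination of a dense orbit with contraction --- shrinking a set of known dimension until it fits inside any chart --- that substitutes for the local constancy you assumed. Your remaining steps (irreducibility excludes proper clopen invariant sets; $X^s(x)=\bigcup_n\varphi^{-n}(X^s(\varphi^n x,\ep))$ plus the countable closed sum theorem) are correct, but they are the easy part of the argument.
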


\begin{proof}
We prove the $X^s(x)$ case, the $X^u(x)$ case is similar. Since $(X,\varphi)$ is irreducible, we may choose $x \in X$ such that $\{\varphi^n(x) \}_{n=0}^{\infty}$ is dense.  Note that $dim(X^s(\varphi^i(x))) = dim(X^s(x))$ for all $i$, and call this common dimension $D$.  Now let $x' \in X^s(x)$ be such that $dim(X^s(x',\epsilon_X)) = dim(X^s(x)) = D$.  Since $x' \sim_s x$ we have that $\{\varphi^n(x')\}_{n=0}^{\infty}$ is also dense in $X$, and for each $n$, $\varphi^n(x')$ has neighbourhood $\varphi^n(X^s(x',\epsilon_X)) \subset X^s(\varphi^n(x')) = X^s(\varphi^n(x))$ with dimension D. Let $U_n = \varphi^n(X^s(x',\epsilon_X))$.  Now, let $z \in X$ be any point.  We can find $n$ such that $d(z,\varphi^n(x')) < \epsilon_X/2$ and the diameter of $U_n < \epsilon_X/2$. Then $[\varphi^n(x'), X^s(z, \epsilon_X/2)]$ is an open set in $X^s(\varphi^n(x'))$ which is homeomorphic to $X^s(z, \epsilon_X/2)$, hence $dim(X^s(z, \epsilon_X/2)) \leq D$. On the other hand, $[z,U_n]$ is an open set in $X^s(z)$ homeomorphic to $U_n$, hence $dim(X^s(z)) \geq dim([z,U_n]) = D$. It follows that $dim(X^s(z)) = D$. 
\end{proof}

\section{Correspondences and equivalence for general Smale spaces}\label{sec:constructioncorr}

In this section we provide methods for constructing correspondences. In particular, we prove a special case of a K\"{u}nneth formula for Putnam's homology in which one of the Smale spaces in a shift of finite type and the other has totally disconnected stable sets. It is used to construct examples of Smale spaces with isomorphic homology theories that are not $H$-equivalent.

\subsection{General constructions}

We now give a partial generalization of \cite[Theorem 8.3.8]{LM} (see also Theorem \ref{838LMinCor}).

\begin{thm}\label{generalized LM 8.3.8}
Suppose $(X,\varphi)$ and $(Y,\psi)$ are irreducible Smale spaces such that $X^u(x)$ is totally disconnected for each $x \in X$ and $Y^s(y)$ is totally disconnected for each $y \in Y$. Then, the following are equivalent:
\begin{enumerate}
\item\label{6.1_item1} $(X,\varphi)$ and $(Y,\psi)$ are finitely equivalent;
\item\label{6.1_item2} $(X,\varphi)$ and $(Y,\psi)$ have equal entropy; and
\item\label{6.1_item3} there is correspondence from $(X,\varphi)$ to $(Y, \psi)$.
\end{enumerate}
\end{thm}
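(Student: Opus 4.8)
The plan is to establish the cycle $(3) \Rightarrow (1) \Leftrightarrow (2) \Rightarrow (3)$. The equivalence $(1) \Leftrightarrow (2)$ is exactly Proposition \ref{finiteEqualEntropy}, so it may be quoted directly. The implication $(3) \Rightarrow (1)$ is immediate: a correspondence consists of a u-bijective and an s-bijective map, both finite-to-one by \cite[Theorem 2.5.3]{put}, so every correspondence is in particular a finite equivalence. All the content lies in $(2) \Rightarrow (3)$, where from equal entropy I must manufacture a correspondence.

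The crucial point is that the two total-disconnectedness hypotheses let me replace $X$ and $Y$ by shifts of finite type carried by maps of exactly the required type. Since $X$ is irreducible, hence non-wandering, it admits an s/u-bijective pair by \cite[Theorem 2.6.3]{put}; let $\pi_u^X \colon (Z,\zeta) \to (X,\varphi)$ be its u-bijective leg. By Definition \ref{s/u-bijective pair} each $Z^s(z)$ is totally disconnected, while u-bijectivity gives $Z^u(z) \cong X^u(\pi_u^X(z))$, which is totally disconnected by hypothesis. Via the local product structure the whole space $Z$ is then totally disconnected, so $(Z,\zeta)$ is a shift of finite type by \cite[Theorem 2.2.8]{put}. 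The symmetric argument applied to the s-bijective leg of an s/u-bijective pair for $Y$ produces a shift of finite type mapping s-bijectively onto $(Y,\psi)$.

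Next I would arrange these covers to be irreducible so that the shift-of-finite-type result can be invoked. Since $X$ is topologically transitive, I lift a point with dense forward orbit to $Z$ and pass to its $\omega$-limit set, which lies in a single irreducible component of the non-wandering set of $Z$; this component $(\Sigma_X,\sigma)$ then surjects onto $X$. Restriction keeps the map u-resolving, and as $\Sigma_X$ is non-wandering the restricted map is u-bijective by the unstable analogue of \cite[Theorem 2.5.8]{put}. Writing $(\Sigma_Y,\sigma)$ for the corresponding irreducible s-bijective cover of $Y$, and using that finite-to-one factor maps preserve entropy together with hypothesis $(2)$, I obtain $h(\Sigma_X) = h(X) = h(Y) = h(\Sigma_Y)$. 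Now Theorem \ref{838LMinCor} yields a correspondence $(\Sigma_X,\sigma) \xleftarrow{\alpha_u} (\Sigma_M,\sigma) \xrightarrow{\alpha_s} (\Sigma_Y,\sigma)$ between the two irreducible shifts of finite type. Finally I compose legs: the composite $\Sigma_M \to \Sigma_X \to X$ is u-bijective and the composite $\Sigma_M \to \Sigma_Y \to Y$ is s-bijective, since composites of u-bijective (resp. s-bijective) maps are again of that type. This is the required correspondence from $(X,\varphi)$ to $(Y,\psi)$, and as a byproduct its middle space is forced to be a shift of finite type.

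The step I expect to be the main obstacle is the irreducibility reduction in the third paragraph. Extracting the shift-of-finite-type covers from the s/u-bijective pairs is clean, but Theorem \ref{838LMinCor} requires irreducible graphs, so I genuinely need the covers to be irreducible while remaining surjective and u- (resp. s-) bijective; the transitivity and spectral-decomposition argument, combined with the fact that u-resolvingness survives restriction to an invariant subsystem, is what makes this go through.
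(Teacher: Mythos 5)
Your proof is correct and follows essentially the same route as the paper's: the cycle $(2)\Rightarrow(3)\Rightarrow(1)\Rightarrow(2)$, with Proposition \ref{finiteEqualEntropy} and \cite[Theorem 2.5.3]{put} handling the easy implications, and $(2)\Rightarrow(3)$ proved by producing a u-bijective shift-of-finite-type cover of $(X,\varphi)$ and an s-bijective one of $(Y,\psi)$, arranging them to be irreducible of equal entropy, invoking Theorem \ref{838LMinCor}, and composing legs. The only difference is one of detail: where the paper merely asserts (via Bowen's theorem) that covers of the required type exist and can be taken irreducible, you supply the justifications---extracting the covers from the legs of s/u-bijective pairs via total disconnectedness, and the dense-orbit/$\omega$-limit-set reduction to an irreducible component---and both arguments are sound.
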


\begin{proof}
Theorem \ref{finiteEqualEntropy} gives the implication \eqref{6.1_item1} $\implies$ \eqref{6.1_item2}. The relevant definitions and \cite[Theorem 2.5.3]{put} give the implication \eqref{6.1_item3} $\implies$ \eqref{6.1_item1}.   

We now show that \eqref{6.1_item2} $\implies$ \eqref{6.1_item3}. Given $(X,\varphi)$ and $(Y,\psi)$,  Bowen's Theorem \cite[Theorem 28]{Bow} implies that there exist shifts of finite $(\Sigma_X, \sigma)$ and $(\Sigma_Y, \sigma)$ and factor maps $\rho_X : (\Sigma_X, \sigma) \rightarrow (X,\varphi)$ and $\rho_Y : (\Sigma_Y, \sigma) \rightarrow (Y, \psi)$. Moreover, the assumptions on the unstable and stable sets of $(X,\varphi)$ and $(Y, \psi)$ imply that we can take $\rho_X$ to be u-bijective and $\rho_Y$ to be s-bijective.

The existence of the maps $\rho_X$ and $\rho_Y$ and the fact that $(X,\varphi)$ and $(Y,\psi)$ are irreducible with equal entropy imply that $(\Sigma_X, \sigma)$ and $(\Sigma_Y, \sigma)$ can be taken to be irreducible and have equal entropy. Theorem \ref{838LMinCor} implies that there exists a correspondence:
\[
\begin{tikzpicture}
\Corr{0}{0}{(\Sigma_X,\sigma)}{\pi_u}{(\Sigma_M,\sigma)}{\pi_s}{(\Sigma_Y,\sigma)}
\end{tikzpicture}
\]
where $(\Sigma_M, \sigma)$ is a shift of finite type. It follows that
\[
\begin{tikzpicture}
\Corr{0}{0}{(X,\varphi)}{\rho_X \circ \pi_u}{(\Sigma_M,\sigma)}{\rho_Y \circ \pi_s}{(Y,\psi)}
\end{tikzpicture}
\]
is a correspondence.
\end{proof}
\begin{example}
Let $(X_0, \varphi_0)$, $(X_1, \varphi_1)$, and $(X_2, \varphi_2)$ be nonwandering Smale spaces and $\rho_s : (X_1, \varphi_1) \rightarrow (X_0, \varphi_0)$ be $s$-bijective and $\rho_u: (X_2, \varphi_2) \rightarrow (X_0, \varphi_0)$ be $u$-bijective. Then 
\[
\begin{tikzpicture}
\Corr{0}{0}{(X_1,\varphi_1)}{P_1}{\fib{(X_1,\varphi_1)}{\rho_s}{\rho_u}{(X_2,\varphi_2)}}{P_2}{(X_2,\varphi_2)}
\end{tikzpicture}
\]
is a correspondence. Moreover, \cite[Theorem 5.1]{DKWfunProPutHom} and Remark \ref{nonWanCon} imply that the induced maps on homology are equal:
$$ \rho_u^{s*} \circ \rho_u^s=P_2^s \circ P_1^{s*} \hbox{ and } \rho_s^{u*} \circ \rho_u^u = P_1^u \circ P_2^{u*}.$$
A specific example of this situation is an s/u-bijective pair, see Definition \ref{s/u-bijective pair}. More generally, one can construct correspondences from $(X_1, \varphi_1)$ to $(X_2, \varphi_2)$ with middle space given by the ``iterated fiber product", see \cite[Section 2.6]{put}.
\end{example}

Another way to produce correspondences between Smale spaces is to use Putnam's Lifting Theorem, which we restate for the readers convenience. 

\begin{thm}[Putnam's Lifting Theorem {\cite[Theorem 1.1]{PutLift}}] \label{putLiftThm}
Let $(X,\varphi)$ and $(Y,\psi)$ be irreducible Smale spaces and $\pi: (X,\varphi) \rightarrow (Y,\psi)$ be an almost one-to-one factor map. Then, there exist irreducible Smale spaces $(\tilde{X}, \tilde{\varphi})$ and $(\tilde{Y}, \tilde{\psi})$ along with a commutative diagram
\begin{center}
$\begin{CD}
(\tilde{X},\tilde{\varphi}) @>\tilde{\pi}>> (\tilde{Y}, \tilde{\psi}) \\
@V\alpha VV @VV \beta V \\
(X, \varphi) @>\pi >> (Y, \psi)
\end{CD}$
\end{center}
where
\begin{enumerate}
\item $\tilde{\pi}$ is s-resolving;
\item $\alpha$ and $\beta$ are u-resolving.
\end{enumerate}
\end{thm}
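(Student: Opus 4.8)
Since this is Putnam's result, recorded here for convenience from \cite{PutLift}, I sketch the strategy one would use to prove it. The plan is to build the two u-resolving covers by adjoining to each point the combinatorial data that $\pi$ discards along stable sets, arranged so that this data is constant in the unstable direction. Since $\alpha$ and $\beta$ must be u-resolving, i.e.\ injective on unstable sets, the covers are allowed to refine stable sets but must leave unstable sets essentially intact. The guiding principle is therefore to refine the stable sets of $Y$ just enough that the lifted map $\tilde{\pi}$ becomes injective on stable sets, that is, s-resolving: having packaged exactly the information needed to separate stably equivalent points of $\tilde{X}$ with a common image is what forces $\tilde{\pi}$ to be s-resolving.

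First I would use the almost one-to-one hypothesis to isolate the obstruction. The set $U \subseteq Y$ of points with a unique preimage is a dense $G_\delta$, and over $\pi^{-1}(U)$ the map $\pi$ is injective, hence already s-resolving there; the obstruction is concentrated over the meagre set $Y \setminus U$. The natural device is to define $\tilde{X}$ and $\tilde{Y}$ as closures of the graphs of the disambiguating assignments that are well defined over this good set, sitting inside auxiliary product spaces whose adjoined factor is unstable-invariant. Passing to closures extends the resolving behaviour from the good set to all of $X$ and $Y$ while splitting points precisely where $\pi$ fails to be injective; the coordinate projections $\alpha,\beta$ then recover $(X,\varphi)$ and $(Y,\psi)$, and because the adjoined factor is constant on unstable sets, these projections are u-resolving. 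A short reduction using Smale's decomposition theorem together with Lemma \ref{perLemma}, which gives ${\rm Per}(X,\varphi)={\rm Per}(Y,\psi)$, would let me arrange the pieces so that the covers $(\tilde{X},\tilde{\varphi})$ and $(\tilde{Y},\tilde{\psi})$ can be taken irreducible.

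The hard part will be verifying that these closure-of-graph spaces are genuine Smale spaces. One must produce a bracket on $\tilde{X}$ and $\tilde{Y}$ satisfying B1--B4 together with the contraction axioms C1 and C2; equivalently, one must check that the refinement of the stable sets is compatible with the local product structure and that the adjoined coordinate contracts and expands correctly under $\tilde{\varphi}$ and $\tilde{\psi}$. Equally delicate is upgrading the local statements to global ones, namely that $\alpha$ and $\beta$ are surjective factor maps which are injective on entire unstable sets rather than merely locally, that $\tilde{\pi}$ is globally injective on stable sets, and that irreducibility and the recurrence hypotheses survive the construction. Once the local product structure is shown to pass to the closures, the resolving properties and the commutativity of the square follow from the good-set/bad-set dichotomy supplied by the almost one-to-one assumption; it is the structural verification of the Smale space axioms, and the local-to-global passage for the resolving conditions, where essentially all of the work resides.
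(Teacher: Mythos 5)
The first thing to say is that the paper contains no proof of this statement: it is Putnam's theorem, restated verbatim from \cite[Theorem 1.1]{PutLift} so that it can be used later to manufacture correspondences, and the only proof in existence is the one in \cite{PutLift}. You recognized this correctly, so the question is whether your outline could be expanded into a proof of Putnam's theorem. Beyond the (admitted) deferral of all the verifications, there is a concrete obstruction: the one step your sketch actually commits to fails.

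Your central device is to take the closure of the graph of ``the disambiguating assignment that is well defined over the good set'' $U$ of points with a unique preimage. But over $U$ there is nothing to disambiguate: the only canonical assignment defined there is $y \mapsto \pi^{-1}(y)$, which is singleton-valued. Its graph is $\{(\pi(x),x) : x \in \pi^{-1}(U)\} \subseteq Y \times X$, and since any limit $(y,x)$ of such pairs satisfies $\pi(x)=y$ by continuity, the closure is still contained in the reversed graph of $\pi$; projection to the second coordinate is therefore injective on it. In other words, the closure is conjugate (as a system under $\psi \times \varphi$) to a closed $\varphi$-invariant subsystem of $(X,\varphi)$, and under this identification the projection $\beta$ \emph{is} the restriction of $\pi$ to that subsystem. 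No point of $Y$ is ever split into several points of $\tilde{Y}$ --- directly contradicting your stated aim of ``splitting points precisely where $\pi$ fails to be injective'' --- and in the irreducible case one checks that $\pi^{-1}(U)$ is dense in $X$, so the closure is all of $X$ and $\beta = \pi$ exactly, the very map whose failure to be resolving one is trying to repair. Replacing $Y \times X$ by $Y \times K(X)$, with $K(X)$ the space of compact subsets of $X$ in the Hausdorff metric, changes nothing, because a Hausdorff limit of singletons is a singleton. This shows that the lifted spaces must carry data strictly richer than anything determined by the fibers over $U$: in \cite{PutLift} the adjoined data records, roughly, how fibers of nearby points degenerate as one approaches along unstable directions, and the bulk of that paper consists of making such data well defined and then verifying the bracket axioms, the factor-map and resolving properties, and irreducibility. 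Since the foundational step of your sketch collapses and everything after it is deferred to ``the hard part,'' the proposal is not a viable skeleton of a proof.
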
 

Using the notation of Theorem \ref{putLiftThm} and \cite[Theorem 2.5.8]{put}, we have that 
\[
\begin{tikzpicture}
\Corr{0}{0}{(X,\varphi)}{\alpha}{(\tilde{X},\tilde{\varphi})}{\tilde{\pi}}{(\tilde{Y},\tilde{\psi})}
\end{tikzpicture}
\]
is a correspondence. Given the setup of Theorem \ref{putLiftThm}, it is natural to ask if there exists a correspondence 
\[
\begin{tikzpicture}
\Corr{0}{0}{(X,\varphi)}{\pi_u}{(M,\mu)}{\pi_s}{(Y,\psi)}
\end{tikzpicture}
\]
which ``factors" $\pi$ (i.e., $\pi \circ \pi_u =\pi_s$). This would correspond to being able to take $\beta$ equal to the identity map in the diagram in the statement of Putnam's lifting theorem, which is not possible in general. We note that factoring $\pi$ as the composition of an s-bijective map with a u-bijective map corresponds to being able to take $\alpha$ equal to the identity in this same diagram; the reader is directed to the introduction of \cite{PutLift} for more on this case.

Conversely, it is natural to ask if, for each correspondence
\[
\begin{tikzpicture}
\Corr{0}{0}{(X,\varphi)}{\pi_u}{(M,\mu)}{\pi_s}{(Y,\psi),}
\end{tikzpicture}
\]
 there exists a factor map $\pi: (X, \varphi) \to (Y,\psi)$ such that $\pi \circ \pi_u=\pi_s$. However, there are shifts of finite type that are counterexamples. In general, if there exists such a factor map $\pi$, then for each $x \in X$, $\pi_s ( \pi_u^{-1} (\{ x\}))$ is a single element; this rather strong condition is certainly not satisfied by an arbitrary correspondence.

\subsection{Special case of a K\"{u}nneth formula for Putnam's homology}

In this section, we prove the following result:
\begin{thm} \label{babyKunn}
Let $(X,\varphi)$ be a non-wandering Smale space with totally disconnected stable sets, and $(\Sigma, \sigma)$ be a non-wandering shift of finite type. Then,
\begin{align}
\label{Kunneth_1}
\left( H^s_*((\Sigma, \sigma)\times (X,\varphi)), (\sigma \times \varphi)^s \right) & \cong
\left(D^s(\Sigma, \sigma) \otimes H^s_*(X, \varphi), \sigma^s \otimes \varphi^s \right) \, \text{ and}\\
\label{Kunneth_2}
\left( H^u_*((\Sigma, \sigma)\times (X,\varphi)), (\sigma \times \varphi)^u \right) & \cong
\left(D^u(\Sigma, \sigma) \otimes H^u_*(X, \varphi), \sigma^u \otimes \varphi^u \right)
\end{align}
\end{thm}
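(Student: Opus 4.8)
The plan is to realise the product $(\Sigma,\sigma)\times(X,\varphi)$ through an s/u-bijective pair that is itself a product of pairs, to identify the resulting double complex with a tensor product, and then to pass to homology using exactness. Since $(X,\varphi)$ has totally disconnected stable sets, condition (4) of Definition \ref{s/u-bijective pair} lets me take the u-part of an s/u-bijective pair for $(X,\varphi)$ to be $(X,\varphi,\id)$; combining this with an s-part $(Y,\psi,\pi_s)$ supplied by Putnam's generalization of Bowen's Theorem yields a pair $\pi^X=(Y,\psi,\pi_s,X,\varphi,\id)$ for $(X,\varphi)$. As $(\Sigma,\sigma)$ is totally disconnected it has the trivial pair $(\Sigma,\sigma,\id,\Sigma,\sigma,\id)$. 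I would then take the coordinatewise product $\tilde\pi$ of these two pairs and check that it is an s/u-bijective pair for $(\Sigma,\sigma)\times(X,\varphi)$: a product of s-bijective (resp. u-bijective) maps is again s-bijective (resp. u-bijective), and a product of totally disconnected spaces is totally disconnected, so all four conditions of Definition \ref{s/u-bijective pair} persist.

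The key computation is that the constituent shifts of finite type factor as a product. Because the two $\Sigma$-coordinates in $\Sigma_{L,M}(\tilde\pi)$ are forced to agree across all indices, one obtains a conjugacy
\[
\Sigma_{L,M}(\tilde\pi)\;\cong\;\Sigma\times\Sigma_{L,M}(\pi^X)
\]
for every $L,M\ge 0$. This is a product of shifts of finite type, whose defining graph has adjacency matrix the Kronecker product $A_\Sigma\otimes A_{L,M}$ of the adjacency matrix of $\Sigma$ with that of $\Sigma_{L,M}(\pi^X)$. Since the tensor product of abelian groups commutes with direct limits, the direct limit defining $D^s(\Sigma_{L,M}(\tilde\pi))$, whose connecting maps are the iterates of $A_\Sigma\otimes A_{L,M}$, is naturally isomorphic to
\[
D^s(\Sigma)\otimes D^s(\Sigma_{L,M}(\pi^X)),
\]
and likewise for $D^u$ using the transposed matrices.

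The crux is to upgrade this termwise isomorphism to an isomorphism of the double complexes defining Putnam's homology. Under the identification above the boundary maps of $\tilde\pi$ are built from the maps $\id_\Sigma\times\delta_{l,\,}$ and $\id_\Sigma\times\delta_{\,,m}$, which are s-bijective and u-bijective respectively by \cite[Theorem 2.6.13]{put}. I would therefore show that the dimension-group Künneth isomorphism is natural with respect to these maps, so that the induced maps $(\id_\Sigma\times\delta)^s$ and $(\id_\Sigma\times\delta)^{s*}$ correspond to $\id_{D^s(\Sigma)}\otimes\delta^s$ and $\id_{D^s(\Sigma)}\otimes\delta^{s*}$; this can be verified from the explicit matrix descriptions in Propositions \ref{leftcoveringMatrixMap} and \ref{rightcoveringMatrixMap}, after presenting the relevant maps as covering maps of graphs, together with the fact that products of covering maps correspond to Kronecker products of the associated $0$--$1$ matrices. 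Because the $S_{L+1}\times S_{M+1}$ action fixes the $\Sigma$-coordinate, the functors that produce $C^s_{\QQ,\AA}$ from $D^s$ by passing to subgroups, quotients, and images under this action commute with $D^s(\Sigma)\otimes(-)$, whence $C^s_{\QQ,\AA}(\tilde\pi)\cong D^s(\Sigma)\otimes C^s_{\QQ,\AA}(\pi^X)$ as double complexes. I expect this naturality step, and in particular the compatibility of the wrong-way maps $\delta^{s*}$ with the tensor decomposition, to be the main obstacle, since it is the one point where genuine properties of the induced maps, rather than formal bookkeeping, are needed.

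Finally, $D^s(\Sigma)$ is torsion-free and hence flat over $\Z$, so $D^s(\Sigma)\otimes(-)$ is exact and commutes with the formation of homology in Definition \ref{HomDefn}. This gives $H^s_*(\tilde\pi)\cong D^s(\Sigma)\otimes H^s_*(\pi^X)$, and by the independence of Putnam's homology from the chosen pair \cite[Theorem 5.5.1]{put} the two sides are $H^s_*((\Sigma,\sigma)\times(X,\varphi))$ and $D^s(\Sigma)\otimes H^s_*(X,\varphi)$. Since $\sigma\times\varphi$ acts as $\sigma$ on the $\Sigma$-factor and induces $\varphi^s$ on $H^s_*(X,\varphi)$, the same naturality applied to the conjugacy $\sigma\times\varphi$ identifies the induced automorphism with $\sigma^s\otimes\varphi^s$, establishing \eqref{Kunneth_1}. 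The proof of \eqref{Kunneth_2} is identical, with $D^s$, the maps $\delta^s,\delta^{s*}$, and the matrices $A_{L,M}$ replaced by $D^u$, their $u$-analogues, and the corresponding transposes.
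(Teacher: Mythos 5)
Your proposal is correct and takes essentially the same approach as the paper: both exploit the totally disconnected stable sets to reduce to an s-bijective shift-of-finite-type cover of $(X,\varphi)$ (your pair $\pi^X$ with identity u-part is exactly this data), identify the fibre-product shifts for the product system with $\Sigma \times (\text{fibre products for } X)$, prove the dimension-group K\"{u}nneth isomorphism via Kronecker products of adjacency matrices, verify compatibility with the boundary maps, and conclude by exactness together with invariance of the homology under the choice of pair. The only cosmetic difference is that you carry the full double complex, handling the $S_{L+1}\times S_{M+1}$ reduction and flatness of $D^s(\Sigma,\sigma)$ explicitly, whereas the paper works with Putnam's simplified complex for Smale spaces with totally disconnected stable sets (\cite[Chapter 4 and Section 7.2]{put}), which collapses the u-direction and absorbs those steps into the citation.
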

The proof requires a number of lemmas. We give a detailed proof of \eqref{Kunneth_1} and note that \eqref{Kunneth_2} is analogous. The reader should note also that the assumption that the stable sets are totally disconnected simplifies the definition of Putnam's homology theory, see \cite[Chapter 4 and Section 7.2]{put}.

\begin{lemma} \label{KunForDimGroup}
If $(\Sigma, \sigma)$ and $(\Sigma^{\prime}, \sigma^{\prime})$ are shifts of finite type, then 
\begin{align*}
(D^s((\Sigma, \sigma) \times (\Sigma^{\prime}, \sigma^{\prime})), (\sigma \times \sigma)^s) & \cong (D^s(\Sigma, \sigma) \otimes D^s(\Sigma^{\prime}, \sigma^{\prime}), \sigma^s \otimes (\sigma')^s) \\
(D^u((\Sigma, \sigma) \times (\Sigma^{\prime}, \sigma^{\prime})), (\sigma \times \sigma)^u) & \cong (D^u(\Sigma, \sigma) \otimes D^u(\Sigma^{\prime}, \sigma^{\prime}), \sigma^u \otimes (\sigma')^u).
\end{align*}
%The explicit map, defined on elementary tensors and clopen sets $E$ and $E^{\prime}$ as in \cite[Definition 3.3.2]{put} via 
%$$\alpha: [E] \otimes [E^{\prime}] \mapsto [E \times E^{\prime}] $$ 
%is an isomorphism. Moreover, thi\s isomorphism intertwines $(\sigma\times \sigma^{\prime})^s$ and $\sigma^s \otimes (\sigma^{\prime})^s$ (respectively $(\sigma\times \sigma^{\prime})^u$ and $\sigma^u \otimes (\sigma^{\prime})^u$).
\end{lemma}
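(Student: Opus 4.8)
The plan is to reduce the statement to the inductive-limit description of the dimension groups together with the fact that the tensor product of abelian groups commutes with direct limits. Since the dimension group together with its shift automorphism is a conjugacy invariant, I would first fix finite graphs $G$ and $H$ with no sources or sinks such that $(\Sigma, \sigma) = (\Sigma_G, \sigma)$ and $(\Sigma', \sigma') = (\Sigma_H, \sigma)$, using \cite[Theorem 2.2.8]{put}. The first substantive step is to realise the product as a shift of finite type: the assignment $((e^k)_k, (f^k)_k) \mapsto ((e^k, f^k))_k$ is a conjugacy from $(\Sigma_G \times \Sigma_H, \sigma \times \sigma)$ onto $(\Sigma_{G\times H}, \sigma)$, where $G \times H$ is the graph with vertices $G^0 \times H^0$, edges $G^1 \times H^1$, and $i(e,f) = (i(e), i(f))$, $t(e,f) = (t(e), t(f))$. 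A direct check shows that $G\times H$ again has no sources or sinks and that its adjacency matrix is the Kronecker product $A_{G \times H} = A_G \otimes A_H$, acting on $\Z^{|G^0|} \otimes \Z^{|H^0|} \cong \Z^{|G^0||H^0|}$.

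With $p = |G^0|$ and $q = |H^0|$, recall that $D^s(\Sigma_G, \sigma) = \lim_{\longrightarrow}(\Z^p \stackrel{A_G}{\longrightarrow} \Z^p \stackrel{A_G}{\longrightarrow} \cdots)$ and that, in this description, the shift automorphism $\sigma^s$ is multiplication by $A_G$, i.e. the map $[v,j] \mapsto [A_G v, j]$ of Definition \ref{mapFromMatrix} taken with $F = A_G$; it is invertible on the limit, with inverse $[v,j] \mapsto [v, j+1]$. Consequently $(\sigma\times\sigma)^s$ on $D^s(\Sigma_{G\times H}, \sigma)$ is multiplication by $A_G \otimes A_H$, whereas $\sigma^s \otimes (\sigma')^s$ equals multiplication by $A_G$ tensored with multiplication by $A_H$, which is again multiplication by $A_G \otimes A_H$ on the tensor product. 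This is the observation that makes the two automorphisms correspond.

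The core is then purely algebraic. Because $-\otimes-$ commutes with direct limits in each variable (it is a left adjoint) and the diagonal is cofinal in $\N \times \N$, I obtain a natural isomorphism
\[
D^s(\Sigma_G, \sigma) \otimes D^s(\Sigma_H, \sigma) \cong \lim_{\longrightarrow}\big((\Z^p \otimes \Z^q) \stackrel{A_G \otimes A_H}{\longrightarrow} (\Z^p \otimes \Z^q) \stackrel{A_G \otimes A_H}{\longrightarrow} \cdots\big).
\]
Identifying $\Z^p \otimes \Z^q$ with $\Z^{pq}$ and $A_G \otimes A_H$ with $A_{G\times H}$, the right-hand side is exactly $D^s(\Sigma_{G\times H}, \sigma)$, which gives the desired group isomorphism. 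The unstable statement is identical after replacing $A_G, A_H$ by $A_G^t, A_H^t$ and using $(A_G \otimes A_H)^t = A_G^t \otimes A_H^t$.

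Finally, the isomorphism is assembled level-wise from the identifications $\Z^p \otimes \Z^q = \Z^{pq}$, which commute with the connecting maps; since on each side the connecting map is precisely the map defining the relevant automorphism, the limit isomorphism automatically intertwines $\sigma^s \otimes (\sigma')^s$ with $(\sigma\times\sigma)^s$. The only point demanding care — and the step I expect to be the main, if modest, obstacle — is verifying that this level-wise identification is natural with respect to the connecting maps, so that it both descends to the limit and transports the automorphisms correctly; this reduces to the commutativity of the square relating $A_G \otimes A_H$ to the identification $\Z^p \otimes \Z^q \cong \Z^{pq}$, which is routine once the Kronecker-product computation of the first step is in place.
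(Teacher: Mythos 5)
Your proposal is correct and takes essentially the same route as the paper: realize $(\Sigma,\sigma)\times(\Sigma',\sigma')$ as the shift of the product graph $G\times H$, observe that $A_{G\times H}=A_G\otimes A_H$, and identify the resulting inductive limits, where the paper declares the last identification ``routine'' and you fill it in via the fact that tensor product commutes with direct limits together with cofinality of the diagonal in $\N\times\N$. One minor convention point: in this paper it is $(\sigma^{-1})^s$, not $\sigma^s$, that acts as multiplication by $A_G$ on $D^s(\Sigma_G,\sigma)$, but since an isomorphism intertwines a pair of automorphisms if and only if it intertwines their inverses, this does not affect the validity of your argument.
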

\begin{proof}
We prove the stable case; the unstable case is similar.  We can find graphs $G$ and $H$ such that  $(\Sigma, \sigma) \cong (\Sigma_G, \sigma_G)$ and $(\Sigma^{\prime}, \sigma^{\prime}) \cong (\Sigma_H, \sigma_H)$. Then
\[
D^s(\Sigma_G, \sigma_G):= \Z^{|G^0|} \stackrel{A_G}{\rightarrow} \Z^{|G^0|} \stackrel{A_G}{\rightarrow} \Z^{|G^0|}
 \stackrel{A_G}{\rightarrow} 
 \]
 and 
 \[
D^s(\Sigma_H, \sigma_H):= \Z^{|H^0|} \stackrel{A_H}{\rightarrow} \Z^{|H^0|} \stackrel{A_H}{\rightarrow} \Z^{|H^0|}
 \stackrel{A_H}{\rightarrow},
 \]
as described in Section \ref{DimensionGroups}. Now consider the graph $G \times H$.  That is, the graph with vertex set $(G \times H)^0 = G^0 \times H^0$ and if $G^1_{uv}$ is the set of edges in $G$ from $u$ to $v$, then $(G \times H)^1_{(u,u')(v,v')} = G^1_{uv} \times H^1_{u'v'}$.  Now, it is straightforward to see that $(\Sigma_G, \sigma_G) \times (\Sigma_H, \sigma_H) \cong (\Sigma_{G \times H}, \sigma_{G \times H})$ and that $A_{G \times H} = A_G \otimes A_H$. So, identifying $\Z^{|G^0 \times H^0|}$ with $\Z^{|G^0|} \otimes \Z^{|H^0|}$, we have 
\[
D^s(\Sigma_{G \times H}, \sigma_{G \times H}):= \Z^{|G^0|} \otimes \Z^{|H^0|} \stackrel{A_G \otimes A_H}{\longrightarrow} \Z^{|G^0|} \otimes \Z^{|H^0|} \stackrel{A_G \otimes A_H}{\longrightarrow} \Z^{|G^0|} \otimes \Z^{|H^0|}
 \stackrel{A_G \otimes A_H}{\longrightarrow} 
 \]  
It is then routine to show that $D^s(\Sigma_{G \times H}, \sigma_{G \times H}) \cong D^s(\Sigma_G, \sigma_G) \otimes D^s(\Sigma_H, \sigma_H)$.
\end{proof}

\begin{lemma} \label{proOfSelfFibPro}
Suppose $(X,\varphi)$, $(\tilde{X}, \tilde{\varphi})$, $(Y, \psi)$, and $(\tilde{Y}, \tilde{\psi})$ are Smale spaces and $\rho_1: (X,\varphi) \rightarrow (\tilde{X}, \tilde{\varphi})$ and $\rho_2: (Y, \psi) \rightarrow (\tilde{Y}, \tilde{\psi})$ are factor maps. For any positive integer $N$,
\[
X_N(\rho_1) \times Y_N(\rho_2) \cong (X\times Y)_N(\rho_1 \times \rho_2).
\]
In particular, if $\rho_1$ is the identity map, we have
\[
X \times Y_N(\rho_2) \cong (X\times Y)_N(id \times \rho_2).
\]
\end{lemma}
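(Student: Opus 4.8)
The plan is to write down the obvious coordinate-shuffling bijection and verify directly that it is a conjugacy of Smale spaces; this is a bookkeeping argument with no real analytic content. First I would define the candidate map
\[
\Phi:(X\times Y)_N(\rho_1\times\rho_2)\longrightarrow X_N(\rho_1)\times Y_N(\rho_2),\qquad
\Phi\big((x_0,y_0),\dots,(x_N,y_N)\big)=\big((x_0,\dots,x_N),(y_0,\dots,y_N)\big),
\]
which simply separates the $X$-entries from the $Y$-entries of each pair.

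The key step is to confirm that $\Phi$ is a well-defined bijection. Since $(\rho_1\times\rho_2)(x_i,y_i)=(\rho_1(x_i),\rho_2(y_i))$ and equality of pairs is coordinatewise, the defining condition $(\rho_1\times\rho_2)(x_i,y_i)=(\rho_1\times\rho_2)(x_j,y_j)$ for all $0\le i,j\le N$ holds if and only if $\rho_1(x_i)=\rho_1(x_j)$ and $\rho_2(y_i)=\rho_2(y_j)$ for all $i,j$; the first family of conditions says exactly $(x_0,\dots,x_N)\in X_N(\rho_1)$ and the second says $(y_0,\dots,y_N)\in Y_N(\rho_2)$. Hence $\Phi$ lands in the stated target, and the evident reassembly map is a two-sided inverse, so $\Phi$ is a bijection. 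As $\Phi$ is merely a permutation of the coordinates of the ambient space $(X\times Y)^{N+1}\cong X^{N+1}\times Y^{N+1}$, it restricts to a homeomorphism between these two closed subspaces. Recalling that the dynamics on an $N$-fold fibre product acts diagonally, the map $\varphi\times\psi$ on the source sends $\big((x_i,y_i)\big)_i$ to $\big((\varphi(x_i),\psi(y_i))\big)_i$, and under $\Phi$ this is carried precisely to $\big((\varphi(x_i))_i,(\psi(y_i))_i\big)$, which is the restriction of $\varphi\times\psi$ to $X_N(\rho_1)\times Y_N(\rho_2)$; thus $\Phi$ intertwines the dynamics.

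Finally I would note that $\Phi$ is an isomorphism of Smale spaces and not merely of the underlying dynamical systems. This is the only point meriting a word of care, and it is where the ``main obstacle'' — such as it is — lies: one must know that a coordinate-permuting conjugacy transports the bracket. But the bracket on each $N$-fold fibre product is the restriction of the coordinatewise product bracket, and the product bracket on $X\times Y$ is likewise computed coordinatewise, so $\Phi$ automatically carries one bracket to the other (alternatively, one invokes uniqueness of the bracket together with the fact that a homeomorphism intertwining the dynamics preserves stable and unstable equivalence). This proves the first assertion. The ``in particular'' statement is the special case $\rho_1=\id$, for which $X_N(\id)=\{(x,\dots,x)\mid x\in X\}$ is the diagonal, canonically conjugate to $X$; substituting this identification into the first assertion yields $X\times Y_N(\rho_2)\cong (X\times Y)_N(\id\times\rho_2)$.
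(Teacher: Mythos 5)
Your proof is correct and is exactly what the paper means when it says the first statement ``follows from the definitions'': the coordinate-shuffling map you write down is the intended conjugacy, and your handling of the ``in particular'' case (identifying $X_N(\id)$ with the diagonal, hence with $X$) matches the paper's remark that it is an immediate consequence of the first statement. Your added care about the bracket being transported (via coordinatewise computation, or via uniqueness of the bracket under a topological conjugacy) is a detail the paper leaves implicit but is handled correctly.
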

\begin{proof}
The first statement follows from the definitions; the second is an immediate consequence of the first.
\end{proof}
\begin{lemma}\label{dimGrpIso} 
Let $(\Sigma, \sigma)$ and $(\Sigma_X,\sigma)$ be shifts of finite type, and let $\rho: (\Sigma_X,\sigma) \to (X,\varphi)$ be an s-bijective factor map.  Fix a non-negative integer $N$, and let 
\[
\delta^{id \times \rho}_i : D^s ((\Sigma \times \Sigma_X)_N(id \times \rho)) \rightarrow D^s((\Sigma \times \Sigma_X)_{N-1}(id \times \rho))
\]
and 
\[
\delta^{\rho}_i : D^s ((\Sigma_X)_N(\rho)) \rightarrow D^s((\Sigma_X)_{N-1}( \rho))
\]
be defined as in \cite[Section 4.1]{put}. Then, using the isomorphisms in the previous two lemmas, we have that the following diagram commutes:
\begin{center}
$\begin{CD}
D^s ((\Sigma \times \Sigma_X)_N(id \times \rho)) @>\delta^{id\times \rho}_i >>  D^s((\Sigma \times \Sigma_X)_{N-1}(id \times \rho)) \\
@V\Phi VV @VV\Phi V \\
 D^s(\Sigma,\sigma) \otimes D^s ((\Sigma_X)_N(\rho)) @>id \otimes \delta^{\rho}_i >> D^s(\Sigma,\sigma) \otimes D^s((\Sigma_X)_{N-1}( \rho))
\end{CD}$
\end{center}
where $\Phi$ is the isomorphism defined in the proof.
\end{lemma}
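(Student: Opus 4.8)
The plan is to build $\Phi$ as a two-step composite and then reduce commutativity of the square to a single naturality statement for the Künneth isomorphism of Lemma \ref{KunForDimGroup}. First I would apply Lemma \ref{proOfSelfFibPro} with the identity on the first factor to obtain a conjugacy $\Psi_N : (\Sigma \times \Sigma_X)_N(\id \times \rho) \cong (\Sigma,\sigma) \times (\Sigma_X)_N(\rho)$, and similarly $\Psi_{N-1}$. Since $\Sigma_X$ is totally disconnected, so is every fibre power $(\Sigma_X)_N(\rho)$ (a closed subsystem of $\Sigma_X^{N+1}$, and a Smale space by \cite[Proposition 2.4.4]{put}), hence a shift of finite type by \cite[Theorem 2.2.8]{put}. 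Thus Lemma \ref{KunForDimGroup} applies and yields an isomorphism $K_N : D^s((\Sigma,\sigma) \times (\Sigma_X)_N(\rho)) \cong D^s(\Sigma,\sigma) \otimes D^s((\Sigma_X)_N(\rho))$. I would then set $\Phi := K_N \circ \Psi_N^s$ in the top row and $K_{N-1} \circ \Psi_{N-1}^s$ in the bottom row, where $\Psi_N^s$ is the isomorphism on $D^s$ induced by the conjugacy $\Psi_N$. (Since $\rho$ is s-bijective, \cite[Theorem 2.5.14]{put} guarantees the space-level deletion maps are s-bijective, so all induced maps exist.)

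Next I would verify the square commutes at the level of spaces. A point of $(\Sigma \times \Sigma_X)_N(\id \times \rho)$ has the form $((a_0,y_0),\dots,(a_N,y_N))$ with $(\id\times\rho)(a_k,y_k)$ independent of $k$; because the first coordinate is untouched by $\id$, this forces $a_0 = \cdots = a_N =: a$ and $(y_0,\dots,y_N) \in (\Sigma_X)_N(\rho)$, so $\Psi_N$ sends the point to $(a,(y_0,\dots,y_N))$. Deleting the $i$th coordinate preserves the common entry $a$ and deletes $y_i$; hence if $d_i$ denotes the space-level deletion on $(\Sigma_X)_N(\rho)$, the deletion map on the product fibre power corresponds under $\Psi$ to $\id_\Sigma \times d_i$, i.e.\ the space-level square commutes strictly. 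Because all maps involved are s-bijective factor maps and $D^s$ is covariantly functorial for such maps (respecting composition, \cite[Section 3.4]{put}), applying $(\cdot)^s$ gives $\Psi_{N-1}^s \circ \delta_i^{\id\times\rho} = (\id_\Sigma \times d_i)^s \circ \Psi_N^s$.

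Substituting this identity into $K_{N-1}\circ \Psi_{N-1}^s \circ \delta_i^{\id\times\rho}$ and cancelling the isomorphism $\Psi_N^s$, the commutativity of the stated square reduces to the single identity
\[
(\id \otimes \delta_i^\rho) \circ K_N = K_{N-1} \circ (\id_\Sigma \times d_i)^s,
\]
that is, to the assertion that the Künneth isomorphism of Lemma \ref{KunForDimGroup} is \emph{natural in the second variable} with respect to the s-bijective map $d_i$. To prove this I would fix graph presentations, say $\Sigma = \Sigma_G$, $(\Sigma_X)_N(\rho) = \Sigma_{K}$, and $(\Sigma_X)_{N-1}(\rho) = \Sigma_{K'}$, and use the identity $A_{G\times K} = A_G \otimes A_K$ from the proof of Lemma \ref{KunForDimGroup}. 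Since tensoring with the fixed group $\Z^{|G^0|}$ commutes with the directed colimits defining the dimension groups, $K_N$ is induced by the level-wise identifications $\Z^{|G^0|}\otimes\Z^{|K^0|} \cong \Z^{|G^0 \times K^0|}$; and because $d_i$ does not involve the first coordinate, the induced map $(\id_\Sigma \times d_i)^s$ is, level-wise, $I \otimes(\text{the matrices computing } \delta_i^\rho)$. Chasing these through the colimit should yield $\id \otimes \delta_i^\rho$.

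The main obstacle is exactly this last naturality step, because $d_i$ is merely s-bijective and need not be a left-covering graph homomorphism in the chosen presentation, so its induced map on $D^s$ is a priori given by Putnam's general construction in \cite[Sections 3.4 and 4.1]{put} rather than the transparent $0$--$1$ matrix formula of Proposition \ref{leftcoveringMatrixMap}. I expect the cleanest route is to recode the presentations (pass to higher block presentations) so that $d_i$ is realized by a left-covering map; then by Proposition \ref{leftcoveringMatrixMap} the induced map is $[v,j]\mapsto [Dv,j]$ for a matrix $D$ with $DA_K = A_{K'}D$, its product with the identity on $\Sigma$ is $[w,j]\mapsto[(I\otimes D)w,j]$, and the required compatibility becomes the matrix identity $(I\otimes D)(A_G\otimes A_K) = (A_G \otimes A_{K'})(I\otimes D)$, which follows at once from $DA_K = A_{K'}D$. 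Verifying that this reduction (or, alternatively, a direct inspection of Putnam's construction) genuinely commutes with taking the product by $(\Sigma,\sigma)$ is where the real bookkeeping lies.
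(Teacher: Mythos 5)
Your proposal has the same skeleton as the paper's proof: you define $\Phi$ as the composite of the map on $D^s$ induced by the conjugacy of Lemma \ref{proOfSelfFibPro} with the K\"{u}nneth isomorphism of Lemma \ref{KunForDimGroup}, you verify that the space-level square commutes strictly (your pointwise computation showing the first coordinates are forced to agree is exactly the content of Lemma \ref{proOfSelfFibPro}), and you then use functoriality of $D^s$ for s-bijective maps to reduce the whole lemma to naturality of the K\"{u}nneth isomorphism in the second variable with respect to the deletion map. Up to that reduction your argument is correct and coincides with the paper's.

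The gap is in the final step, and you have flagged it yourself. The recoding route is circular as stated: the isomorphism $K_N$ of Lemma \ref{KunForDimGroup} is constructed from a choice of graph presentations, so after passing to new presentations in which $d_i$ becomes left-covering you obtain a \emph{different} K\"{u}nneth isomorphism; transporting your matrix identity $(I\otimes D)(A_G\otimes A_K)=(A_G\otimes A_{K'})(I\otimes D)$ back to the originally chosen $K_N$ and $K_{N-1}$ requires knowing that the K\"{u}nneth isomorphism is compatible with conjugacy/recoding in the second variable, which is a naturality statement of precisely the kind being proved. In addition, the lemma needs a single $\Phi$ that simultaneously intertwines all the deletion maps $\delta_0,\dots,\delta_N$ (this is what Corollary \ref{babyKunnethCor} uses, since the boundary map is the alternating sum of the $\delta_i^s$), whereas a recoding adapted to one fixed $i$ produces a $\Phi$ that a priori depends on $i$. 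The paper bypasses both issues by invoking \cite[Theorem 3.4.1]{put}: for an s-bijective map $\pi$ between shifts of finite type, $\pi^s$ acts on generators by $[E]\mapsto[\pi(E)]$, where $E$ is a clopen subset of a stable set. Since clopen subsets of a product are finite disjoint unions of rectangles $E\times F$, and $(id\times\delta_i)(E\times F)=E\times\delta_i(F)$, naturality of the K\"{u}nneth isomorphism becomes an immediate check on generators, with no recoding and no dependence on $i$. Replacing your recoding step with this appeal to \cite[Theorem 3.4.1]{put} closes the gap and makes your argument essentially the paper's proof.
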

\begin{proof}
By Lemma \ref{proOfSelfFibPro}, we have the following commutative diagram:
\begin{center}
$\begin{CD}
(\Sigma \times \Sigma_X)_N(id \times \rho) @>\delta^{id\times \rho}_i >>  (\Sigma \times \Sigma_X)_{N-1}(id \times \rho) \\
@VVV @VVV \\
\Sigma \times (\Sigma_X)_N(\rho) @>id \times \delta^{\rho}_i >> \Sigma \times (\Sigma_X)_{N-1}( \rho)
\end{CD}$
\end{center}
where the vertical maps are conjugacies. In addition, by Lemma \ref{KunForDimGroup}, we also have the commutative diagram:
\begin{center}
$\begin{CD}
D^s(\Sigma \times (\Sigma_X)_N(\rho)) @>(id \times \delta^{\rho}_i)^s >> D^s(\Sigma \times (\Sigma_X)_{N-1}( \rho)) \\
@VVV @VVV \\
D^s(\Sigma) \otimes D^s((\Sigma_X)_N(\rho)) @>id^s \otimes (\delta^{\rho}_i)^s >> D^s(\Sigma) \otimes D^s((\Sigma_X)_{N-1}(\rho))
\end{CD}$
\end{center}
where again the vertical maps are isomorphisms. These commutative diagrams and \cite[Theorem 3.4.1]{put} yield the required result; the map $\Phi$ is explicitly given by the composition of the isomorphisms on stable homology induced from the vertical isomorphisms in the two diagrams.
\end{proof}

\begin{cor}\label{babyKunnethCor}
The isomorphism $\Phi$ defined in the proof of Lemma \ref{dimGrpIso} gives
\[
\left( D^s((\Sigma \times \Sigma_X)_N(id \times \rho)), d^s_N(id\times \rho) \right) \cong
\left( D^s(\Sigma) \otimes D^s((\Sigma_X)_N(\rho)) , id\otimes d^s_N(\rho) \right) 
\]
\end{cor}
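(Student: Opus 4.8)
The plan is to promote the single-face-map statement of Lemma \ref{dimGrpIso} to a statement about the full differential $d^s_N$, using nothing beyond linearity. Recall from \cite[Section 4.1]{put} that, for an s-bijective factor map, the boundary operator on the associated complex is the signed sum of the maps induced by the face (deletion) maps: with the sign convention $(-1)^i$ one has $d^s_N(\rho) = \sum_{i} (-1)^i\, \delta^{\rho}_i$ and, with the identical convention, $d^s_N(id \times \rho) = \sum_{i} (-1)^i\, \delta^{id\times\rho}_i$, both sums ranging over the same index set. Here $\delta^{\rho}_i$ and $\delta^{id\times\rho}_i$ already denote the induced maps on the dimension groups, exactly as in Lemma \ref{dimGrpIso}.

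First I would isolate the key feature of the isomorphism $\Phi$ produced in Lemma \ref{dimGrpIso}: although that lemma is phrased for a single index $i$, the map $\Phi = \Phi_N$ depends only on $N$. Indeed, $\Phi_N$ is assembled from the conjugacy supplied by Lemma \ref{proOfSelfFibPro} together with the K\"{u}nneth isomorphism of Lemma \ref{KunForDimGroup}, neither of which involves the face index. Consequently the commuting square of Lemma \ref{dimGrpIso},
\[
\Phi_{N-1} \circ \delta^{id\times\rho}_i = (id \otimes \delta^{\rho}_i) \circ \Phi_N,
\]
holds for \emph{every} $i$ with one and the same pair $(\Phi_N, \Phi_{N-1})$.

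Next I would form the alternating sum of these squares over $i$. Since $\Phi_N$ and $\Phi_{N-1}$ are fixed $\Z$-linear isomorphisms, and since $id \otimes (-)$ is additive, summing against the signs $(-1)^i$ yields
\[
\Phi_{N-1} \circ d^s_N(id\times\rho) = (id \otimes d^s_N(\rho)) \circ \Phi_N.
\]
As this holds for all $N$, the family $\{\Phi_N\}$ is an isomorphism of chain complexes, which is precisely the assertion of the corollary.

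The argument carries no geometric content beyond Lemma \ref{dimGrpIso}; the only points requiring care---and hence the nearest thing to an obstacle---are the two bookkeeping checks that $\Phi_N$ is genuinely independent of the face index $i$ and that the two differentials are defined with matching sign conventions. Once these are confirmed, the corollary follows formally from linearity, so I expect the write-up to be very short.
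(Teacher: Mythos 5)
Your proposal is correct and is exactly the argument the paper intends: the corollary is stated without proof precisely because, as you observe, $\Phi$ is built from the conjugacy of Lemma \ref{proOfSelfFibPro} and the K\"{u}nneth isomorphism of Lemma \ref{KunForDimGroup} and hence is independent of the face index $i$, so the intertwining of each $\delta_i$ from Lemma \ref{dimGrpIso} passes to the alternating sums defining $d^s_N$ by linearity. Your two bookkeeping checks (independence of $\Phi$ from $i$ and matching sign conventions) are the right points to flag, and the write-up is as short as you predict.
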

%\begin{proof}
%The map $\Phi$ in the previous lemma is the required isomorphism.
%\end{proof}

\begin{proof}[Proof of Theorem \ref{babyKunn}]
Since $(X,\varphi)$ has totally disconnected stable sets, there exists a non-wandering shift of finite type $(\Sigma_X,\sigma)$ and s-bijective factor map $\rho: (\Sigma_X,\sigma) \to (X, \varphi)$.  The result then follows from Corollary \ref{babyKunnethCor} since the isomorphism $\Phi$ respects the functors associated with s-bijective and u-bijective maps.
\end{proof}

\subsection{Isomorphic homology vs H-equivalence}
The next example shows that the notion of $H$-equivalence for Smale spaces is strictly stronger than the property of having an isomorphism at the level of the homology that intertwines the maps induced by the dynamics.
\begin{example}
Let $(Y, \psi)$ denote the solenoid obtained from the pre-solenoid, $(X,h)$, in \cite[Example 2.7]{APG}. Based on \cite[Example 3.4 / 3.5]{APG}, we have that
\begin{align*}
H^s_N(Y, \psi) & \cong \left\{ \begin{array}{ccl} \{ (i, i+j) \ | \ i \in \mathbb{Z} \left[ \frac{1}{3} \right], \: j\in \mathbb{Z} \} / 2 \mathbb{Z} (-1, 1)  & : & N=0 \\ 0 & : & N \neq 0 \end{array} \right. \\
H^u_N(Y, \psi) & \cong \left\{ \begin{array}{ccl} \mathbb{Z} \left[ \frac{1}{3} \right] & : & N=0 \\ \mathbb{Z}/2\mathbb{Z} & : & N=1 \\ 0 & : & N \neq 0, 1 \end{array} \right.
\end{align*}
and that the actions on the degree zero groups of $(\psi^{-1})^s$ and $\psi^u$ are respectively
\begin{enumerate}
\item multiplication by the matrix $ \left( \begin{array}{cc} 2 & 1 \\ 1 & 2 \end{array} \right)$;
\item multiplication by three.
\end{enumerate}
Let $(\Sigma_{[2]}, \sigma_{[2]})$ denote the full two shift; its homology groups are its dimension groups, which are both $\mathbb{Z}\left[\frac{1}{2}\right]$. The action of $\sigma_{[2]}^s$ is division by two, and $\sigma_{[2]}^u$ is multiplication by two. Then, Theorem \ref{babyKunn} and a computation imply that
\begin{align*}
H^s_N((\Sigma_{[2]}, \sigma_{[2]}) \times (Y, \psi)) & \cong \left\{ \begin{array}{ccl} \mathbb{Z}\left[ \frac{1}{6} \right]  & : & N=0 \\ 0 & : & N \neq 0 \end{array} \right. \\
H^u_N(\Sigma_{[2]}, \sigma_{[2]}) \times (Y, \psi)) & \cong \left\{  \begin{array}{ccl} \mathbb{Z}\left[ \frac{1}{6} \right]  & : & N=0 \\ 0 & : & N \neq 0. \end{array} \right.
\end{align*}
These homology groups are the same as those of the full six shift, $(\Sigma_{[6]}, \sigma_{[6]})$. Moreover, the maps $(\sigma_{[2]} \times \psi)^s$ and $\sigma_{[6]}^s$ are both division by six, while $(\sigma_{[2]} \times \psi)^u$ and $\sigma_{[6]}^u$ are both multiplication by six.

%In \cite{APG}, the action of $\psi$ (and hence also $\varphi$) are not considered (see the discussion on page 3 of that paper). However, by possibly replacing $(Y,\psi)$ with $(Y, \psi^n)$ (for some positive integer $n$), we can get an example of the form required. More precisely, there exists positive integer $n$, such that $(\sigma^{-1})^s \otimes (\psi^{-n})^s$ on $H^s_0((\Sigma_{[2]}, \sigma) \times (Y,\psi^n))$ and $\sigma^u\otimes (\psi^n)^u$ on $H^u_0((\Sigma_{[2]}, \sigma) \times (Y,\psi^n))$ are each multiplication by six. 

Thus, $(\Sigma_{[2]}, \sigma_{[2]}) \times (Y, \psi)$ and $(\Sigma_{[6]}, \sigma_{[6]})$ have isomorphic homology theories (even with the automorphisms associated to the dynamics accounted for). We note however that, since $(\Sigma_{[2]}, \sigma_{[2]}) \times (Y, \psi)$ is not a shift of finite type, Lemma \ref{mustBeShiftLemma} implies that it is not $H$-equivalent to $(\Sigma_{[6]}, \sigma_{[6]})$. Thus, $H$-equivalence is strictly stronger than the property of having an automorphism preserving isomorphism at the level of homology.
\end{example}

In \cite{WiePhD,WiePaper}, Wieler gives a very general construction, using inverse limits, of Smale spaces with totally disconnected stable sets. She also presents an intriguing example based on the Sierpinski gasket. Putnam has told us that his computations with T. Bazett show that this Smale space has the same homology as the full 3-shift, although the details have not appeared. Thus, this ``gasket Smale space" is an example of a Smale space which is not a shift of finite type, but has the same homology as one. The example discussed in this section also has this property. Other examples with this property can be produced by considering the product of an unorientable solenoid with the full two shift, and then following the construction in the previous example.

\section{Implications of equivalences for Smale spaces}\label{sec:implications}

In this section, we present a number of consequences, in terms of dynamic notions, of correspondence equivalences between two Smale spaces. The starting point for this discussion is the diagram on page 261 of \cite{LM}; it outlines the various consequences of (strong) shift equivalence for shifts of finite type. One goal of this section is to illustrate the similarity, through their dynamic consequences, between the notions of equivalence for Smale spaces defined using correspondences and (strong) shift equivalence. Quite a number of the implications below follow almost immediately from Putnam's work on the $\zeta$-function of a Smale space in \cite[Section 6]{put}.

We should emphasize that the idea that Smale spaces and shifts of finite type (should) share many similar properties is certainty not new, see for example any of \cite{AM, APG, Bow, Fri, Par, put, Rue1, Sma, WiePaper}. Indeed, Bowen's theorem \cite{Bow} and the existence of an s/u-bijective pair \cite[Theorem 2.6.3]{put} (also see Definition \ref{s/u-bijective pair}) provide strong links between the theories. In the context of Putnam's homology theory we have the following table, which lists generalizations of notions from shift of finite type theory to the theory of Smale spaces. The final four entries will be defined and justified in the remaining part of this section. That these notions are generalizations follows for the most part from results in \cite{put}. 

\begin{center}
\begin{tabular}{|c|c|}
\hline
Shift of Finite Type & Smale Space \\
\hline
$(\Sigma_G, \sigma)$ & $(X, \varphi)$ \\
$D^{s}(\Sigma_G, \sigma)$, $D^u(\Sigma_G, \sigma)$ & $H^s_{\ast}(X,\varphi)$, $H^u_{\ast}(X,\varphi)$ \\
$A_{G\ast}$, $A_G^{\ast}$ & $(\varphi^{-1})^s$, $\varphi^u$ \\
$\mathcal{R}_{A_G}$ & $H^s_{\ast}(X,\varphi) \otimes \mathbb{Q}$ \\
$A_G^{\times}$ & $(\varphi^{-1})^s \otimes id_{\mathbb{Q}}$ \\
$J^{\times}(A_G)$ & $J\left((\varphi^{-1})^s \otimes id_{\mathbb{Q}}\right)$ \\
$\zeta_{\sigma_A}(t)$ & $\zeta_{\varphi}(t)$ \\
\hline
\end{tabular}
\end{center}

\begin{definition}[{\cite[Definitions 7.4.2, 7.4.5, and 7.4.9]{LM}}]
Suppose A is an $r \times r$ integral matrix. The \emph{eventual range} of $A$ is the subspace of $\mathbb{Q}^r$ given by
\[
\mathcal{R}_A = \bigcap_{k=1}^{\infty}A^k\mathbb{Q}^r.
\]
The \emph{invertible part} $A^\times$ of $A$ is the linear transformation obtained by restricting $A$ to its eventual range; that is, $A^\times:\mathcal{R}_A \to \mathcal{R}_A$ is defined by $A^\times=(\sigma_A^{-1})^s \otimes_{\Z} id$. Finally, the \emph{Jordan form away from zero} $J^\times(A)$ is the Jordan form of $A$ restricted to its eventual range.
\end{definition}

We note that our definition of the eventual range of a matrix corresponds to the eventual range of $A^t$ in \cite[Definition 7.4.2]{LM}. We choose to do things differently here so that we can state our results for $D^s(\Sigma_G,\sigma)$ rather than $D^u(\Sigma_G,\sigma)$.

\begin{prop}
Let $(\Sigma_G, \sigma)$ be a shift of finite type. Then 
\[
\mathcal{R}_{A_G} \cong D^s(\Sigma_G,\sigma)\otimes \Q
\]
and we have the following commutative diagram:
\[
\begin{CD}
\mathcal{R}_{A_G} @>A_G>> \mathcal{R}_{A_G} \\
@VVV @VVV \\
D^s(\Sigma_G, \sigma)\otimes \Q @>(\sigma^{-1})^s\otimes id_{\Q} >> D^s(\Sigma_G, \sigma)\otimes \Q
\end{CD}
\]
\end{prop}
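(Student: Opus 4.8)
The plan is to reduce the statement to a standard fact about the inductive limit of the iterated powers of a single matrix. Writing $r=|G^0|$, I would first observe that tensoring the defining inductive limit of $D^s(\Sigma_G,\sigma)$ with $\Q$ commutes with the direct limit (as $\Q$ is flat and $\otimes$ commutes with colimits) and turns each $\Z^r$ into $\Q^r$, so that
\[
D^s(\Sigma_G,\sigma)\otimes\Q \;\cong\; \lim_{\longrightarrow}\big(\Q^r \stackrel{A_G}{\longrightarrow} \Q^r \stackrel{A_G}{\longrightarrow}\cdots\big),
\]
with a typical element still written $[v,j]$ for $v\in\Q^r$ and $j\in\N$, subject to the relation $[v,j]=[A_Gv,j+1]$. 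Under this identification the map $(\sigma^{-1})^s\otimes\id_\Q$ is, as recorded in the table above and in Putnam's description of the dimension-group functor, multiplication by $A_G$, i.e. $[v,j]\mapsto[A_Gv,j]$.

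Next I would invoke the Fitting decomposition for $A_G$ acting on $\Q^r$: for $N$ large enough the kernels and images of the powers $A_G^k$ stabilise, giving an $A_G$-invariant splitting $\Q^r=\ker(A_G^N)\oplus \mathcal{R}_{A_G}$ on which $A_G$ is nilpotent on the first summand and restricts to an isomorphism $A_G^\times$ of $\mathcal{R}_{A_G}=\bigcap_k A_G^k\Q^r$. The key point is that the nilpotent summand dies in the limit: if $v\in\ker(A_G^N)$ then $[v,j]=[A_G^Nv,j+N]=0$. Consequently the inclusion of inductive systems $(\mathcal{R}_{A_G},A_G^\times)\hookrightarrow(\Q^r,A_G)$ induces an isomorphism on limits, and since the first system consists entirely of isomorphisms its limit is canonically $\mathcal{R}_{A_G}$ itself. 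Concretely, I would define $\Theta:\mathcal{R}_{A_G}\to D^s(\Sigma_G,\sigma)\otimes\Q$ by $\Theta(w)=[w,0]$ and check that it is a bijection, injectivity following from the injectivity of $A_G$ on $\mathcal{R}_{A_G}$ and surjectivity from the invertibility of $A_G^\times$ together with the vanishing just noted.

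Finally I would verify that the square commutes with $\Theta$ as the (common) vertical isomorphism. The top arrow sends $w$ to $A_Gw$, which again lies in $\mathcal{R}_{A_G}$ since $A_G^\times$ preserves it, and
\[
\Theta(A_Gw)=[A_Gw,0]=\big((\sigma^{-1})^s\otimes\id_\Q\big)[w,0]=\big((\sigma^{-1})^s\otimes\id_\Q\big)\Theta(w),
\]
which is exactly commutativity. The main obstacle is not any single computation but assembling the linear-algebra identification cleanly: one must pin down the Fitting splitting, argue that passing to the inductive limit exactly discards the nilpotent part (equivalently, that a direct system of isomorphisms has limit any one of its terms), and recall the explicit formula $[v,j]\mapsto[A_Gv,j]$ for $(\sigma^{-1})^s$. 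Once these are in place the isomorphism $\mathcal{R}_{A_G}\cong D^s(\Sigma_G,\sigma)\otimes\Q$ and the commutativity of the diagram follow immediately.
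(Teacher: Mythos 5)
Your proof is correct and follows essentially the same route as the paper's: the paper simply asserts that $\mathcal{R}_{A_G} \cong D^s(\Sigma_G,\sigma)\otimes \Q$ ``follows from the definitions'' and deduces commutativity from the formula $((\sigma^{-1})^s\otimes \id_{\Q})([v,n]\otimes q) = [A_G v,n]\otimes q$, which is exactly the identification and formula you use. Your Fitting-decomposition argument (the nilpotent summand $\ker(A_G^N)$ vanishing in the inductive limit, with $\Theta(w)=[w,0]$ an isomorphism onto the limit) is precisely the detail the paper leaves unwritten, so it is a faithful fleshing-out rather than a different approach.
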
 
\begin{proof}
That $\mathcal{R}_{A_G} \cong D^s(\Sigma_G,\sigma)\otimes \Q$ follows from the definitions.
The commutativity of the diagram follows from the fact that, for $[v,n] \otimes q \in D^s(\Sigma_G, \sigma)\otimes \Q$, $((\sigma^{-1})^s\otimes id_{\Q})([v,n] \otimes q) = [A_G v,n] \otimes q$.
\end{proof}

Let $(X,\varphi)$ be a Smale space. Then the $\zeta$-function of $(X,\varphi)$ is defined by
\[
\zeta_{\varphi}(t)=\exp \left( \sum_{n=1}^{\infty} \frac{{\rm card}\{ x \in X | \varphi^n(x)=x\}}{n} t^n \right)
\]
The $\zeta$-function of a Smale space is discussed in detail in \cite[Section 6]{put}. In particular, we will use the following result.

\begin{cor}[{\cite[Corollary 6.1.2]{put}}]\label{cor-put6-1-2}
Let $(X,\varphi)$ be an irreducible Smale space. For each $N\in \Z$ and $t\in \R$, let $p_N(t)$ be the determinant of 
\[ Id-t(\varphi^{-1})^s_N \otimes id_{\R} : H^s_N(X,\varphi)\otimes \R \rightarrow H^s_N(X,\varphi)\otimes \R
\]
Then
\[ \zeta_{\varphi}(t)= \prod_{N\in \Z} p_N(t)^{(-1)^{N+1}}.
\]
We note that for all but finitely many $N$ we have that $p_N(t)=1$.
\end{cor}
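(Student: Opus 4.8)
The plan is to reduce the product formula to a Lefschetz-type trace formula and then to prove that formula inside Putnam's double complex. First I would pass to (formal) logarithms. Using the identity $\det(\mathrm{Id}-tB)^{-1}=\exp\!\big(\sum_{n\ge 1}\tfrac{t^n}{n}\Tr(B^n)\big)$ applied degree by degree to $B=(\varphi^{-1})^s_N\otimes id_{\R}$ acting on the real vector space $H^s_N(X,\varphi)\otimes\R$, the claimed identity $\zeta_\varphi(t)=\prod_{N}p_N(t)^{(-1)^{N+1}}$ becomes, coefficient by coefficient in $t$, equivalent to the Lefschetz trace formula
\[
\mathrm{card}\{x\in X\mid \varphi^n(x)=x\}=\sum_{N\in\Z}(-1)^N\,\Tr\!\Big(\big((\varphi^{-1})^s_N\otimes id_{\R}\big)^n\Big)\qquad(n\ge 1).
\]
Before anything else I would record that the objects make sense: each $D^s(\Sigma_{L,M}(\pi))\otimes\Q$ is the eventual range of the corresponding adjacency matrix, hence finite dimensional, and $H^s_N\otimes\Q$ is a subquotient of the finite direct sum $\bigoplus_{L-M=N}C_{\QQ,\AA}^s(\pi)_{L,M}\otimes\Q$, so $H^s_N\otimes\R$ is finite dimensional and the determinants are well defined. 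The assertion that $p_N(t)=1$ for all but finitely many $N$ then follows from \cite[Theorem 5.1.10]{put}, which gives that $C_{\QQ,\AA}^s(\pi)$ has only finitely many nonzero terms, so only finitely many $H^s_N$ are nonzero.

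To prove the trace formula I would fix an s/u-bijective pair $\pi$ for $(X,\varphi)$ and work in the double complex $(C_{\QQ,\AA}^s(\pi),d_{\QQ,\AA}^s(\pi))$ computing $H^s_*(X,\varphi)$. Since $\varphi$ commutes with all the structure maps and with the $S_{L+1}\times S_{M+1}$ actions, it induces a chain endomorphism, and the Hopf trace theorem for a bounded complex of $\R$-vector spaces gives
\[
\sum_{N\in\Z}(-1)^N\,\Tr\!\Big(\big((\varphi^{-1})^s_N\big)^n\ \big|\ H^s_N\otimes\R\Big)=\sum_{L,M\ge 0}(-1)^{L-M}\,\Tr\!\Big(\big((\varphi^{-1})^s\big)^n\ \big|\ C_{\QQ,\AA}^s(\pi)_{L,M}\otimes\R\Big),
\]
the total grading being $N=L-M$. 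Each chain group is carved out of $D^s(\Sigma_{L,M}(\pi))$ by the symmetric-group action, and $\Sigma_{L,M}(\pi)$ is a shift of finite type by \cite[Theorem 2.6.6]{put}. For a shift of finite type the classical computation identifies $\Tr\big(((\sigma^{-1})^s)^n\big)$ on $D^s\otimes\R$ with $\Tr(A^n)=\mathrm{card}\{\text{$\sigma^n$-fixed points}\}$; taking the invariant part for one symmetric-group factor and the sign (alternating) part for the other turns this, via the standard formula $\tfrac{1}{|G|}\sum_{g}\mathrm{sgn}(g)\Tr(g\circ\Psi)$ for the trace on an isotypic piece, into a signed sum over permutations $(\tau,\rho)$ of twisted periodic counts, i.e. of periodic $(y_\bullet,z_\bullet)\in\Sigma_{L,M}(\pi)$ with $\psi^n(y_l)=y_{\tau(l)}$ and $\zeta^n(z_m)=z_{\rho(m)}$.

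The final and hardest step is the combinatorial collapse: showing that this alternating, (anti)symmetrized sum over all $(L,M)$ equals $\mathrm{card}\{x\mid \varphi^n(x)=x\}$. I would organize the count by the (finitely many) periodic orbits of $(X,\varphi)$ and show that each orbit contributes exactly $1$. Over a single periodic orbit the periodic preimages in $Y$ and $Z$, together with the face maps $\delta_{l,\,}$ and $\delta_{\,,m}$, assemble into a finite \v{C}ech-type bisimplicial object, and the alternating symmetrized sum is exactly its Euler characteristic; the content is that this local complex is acyclic with Euler characteristic $1$, which reflects that $\pi_s$ is bijective on stable sets and $\pi_u$ on unstable sets over the orbit. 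I expect this local acyclicity/telescoping argument — carefully tracking the signs coming from the alternating ($\AA$) direction and the multiplicities coming from the cycle structure of $(\tau,\rho)$ — to be the main obstacle, since it is the one place where the hyperbolic geometry of the s/u-bijective pair is genuinely used; everything preceding it is formal homological algebra together with the classical shift-of-finite-type zeta computation.
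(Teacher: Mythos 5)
The first thing to note is that the paper contains no proof of this statement: it is imported verbatim as \cite[Corollary 6.1.2]{put}, so the only argument ``in the paper'' is the citation, and the relevant comparison is with Putnam's own proof in Chapter 6 of his memoir. Measured against that, your outline is structurally faithful to what Putnam does: the reduction of the product formula to the Lefschetz-type identity $\mathrm{card}\{x \mid \varphi^n(x)=x\}=\sum_{N}(-1)^{N}\Tr\bigl(((\varphi^{-1})^s_N\otimes id_{\R})^n\bigr)$ via $\log\det(\mathrm{Id}-tB)=-\sum_{n\ge 1}\tfrac{t^n}{n}\Tr(B^n)$, the finite-dimensionality of $H^s_N\otimes\R$ and the finiteness of the product via \cite[Theorem 5.1.10]{put}, the Hopf trace argument on the double complex $(C^s_{\QQ,\AA}(\pi),d^s_{\QQ,\AA}(\pi))$, and the evaluation of chain-level traces by sign-averaged twisted periodic-point counts on the shifts $\Sigma_{L,M}(\pi)$ are exactly the steps in Putnam's Section 6.1.

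The genuine gap is the step you yourself defer: the combinatorial collapse is asserted as something you ``expect'' to work, and that is precisely where the content of Putnam's proof lies, so as a proof the proposal is incomplete. There are also two concrete inaccuracies inside the sketched part. First, you take ``the invariant part for one symmetric-group factor and the sign part for the other''; in fact both factors of $S_{L+1}\times S_{M+1}$ enter with the sign character (one realized as a quotient, the ``$\QQ$'' construction, one as a subgroup, the ``$\AA$'' construction --- over $\Q$ both are the sign-isotypic component). This is not cosmetic: with a trivial-isotypic factor the complex would not even be bounded (symmetric powers never vanish), and the telescoping sum would not terminate. Second, the ``classical computation'' you invoke for shifts of finite type must be its twisted version --- that the trace of (permutation)$\circ$(dynamics) on $D^s\otimes\R$ equals the twisted fixed-point count --- which does not follow formally from $\Tr(A_G^n)=\mathrm{card}\,\mathrm{Fix}(\sigma^n)$ and has to be proved. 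Once these are in place, the collapse itself is more elementary than your ``local acyclicity'' framing suggests, and does not need the hyperbolic geometry over the orbit: localizing at a $\varphi^n$-fixed point $x$ (fixed points, not orbits --- an orbit of length $k$ contributes $k$), the twisted configurations are arbitrary tuples from the finite nonempty fibres $A=\pi_s^{-1}\{x\}$ and $B=\pi_u^{-1}\{x\}$ with the permutations $\alpha,\beta$ induced by $\psi^n,\zeta^n$ (membership in $\Sigma_{L,M}(\pi)$ imposes no further constraint), the sign-isotypic projections turn the signed counts into exterior-power traces, and the key identity is $\sum_{L\ge 0}(-1)^L\Tr(\Lambda^{L+1}\alpha)=1-\det(\mathrm{Id}-\alpha)=1$, since any permutation matrix on a nonempty finite set fixes the all-ones vector. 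What s/u-bijectivity and finiteness-to-one actually buy is the earlier steps: nonemptiness and finiteness of the fibres and the boundedness of the complex.
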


\begin{prop}\label{h-equiv to zeta}
If $(H^s(X,\varphi)\otimes \R, (\varphi^{-1})^s)$ and $(H^s(Y,\psi)\otimes \R, (\psi^{-1})^s)$ are isomorphic, then $\zeta_{\varphi}(s)=\zeta_{\psi}(s)$.
\end{prop}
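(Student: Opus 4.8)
The plan is to reduce the statement to Putnam's trace formula for the zeta function, Corollary \ref{cor-put6-1-2}, which expresses $\zeta_{\varphi}(t)$ as the alternating product $\prod_{N\in\Z} p_N(t)^{(-1)^{N+1}}$, where $p_N(t)=\det\bigl(Id-t(\varphi^{-1})^s_N\otimes id_{\R}\bigr)$ on $H^s_N(X,\varphi)\otimes\R$. Since Putnam's homology groups are finitely generated, each $H^s_N(X,\varphi)\otimes\R$ is a finite-dimensional real vector space, so each $p_N$ is an honest polynomial (the reversed characteristic polynomial of the operator $(\varphi^{-1})^s_N\otimes id_\R$), and all but finitely many of them equal $1$. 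The same holds for $(Y,\psi)$. Thus it suffices to show that the polynomials $p_N$ attached to $(X,\varphi)$ and to $(Y,\psi)$ agree for every $N$.

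First I would unpack the hypothesis: the isomorphism of $\bigl(H^s(X,\varphi)\otimes\R,(\varphi^{-1})^s\bigr)$ with $\bigl(H^s(Y,\psi)\otimes\R,(\psi^{-1})^s\bigr)$ is a \emph{graded} isomorphism of $\R$-vector spaces intertwining the two automorphisms. Concretely, for each degree $N$ it restricts to an $\R$-linear isomorphism $\theta_N:H^s_N(X,\varphi)\otimes\R\to H^s_N(Y,\psi)\otimes\R$ satisfying $\theta_N\circ\bigl((\varphi^{-1})^s_N\otimes id_\R\bigr)=\bigl((\psi^{-1})^s_N\otimes id_\R\bigr)\circ\theta_N$. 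Consequently the two operators are conjugate via $\theta_N$, and since the determinant is invariant under conjugation,
\[
\det\bigl(Id-t(\psi^{-1})^s_N\otimes id_\R\bigr)=\det\Bigl(\theta_N\bigl(Id-t(\varphi^{-1})^s_N\otimes id_\R\bigr)\theta_N^{-1}\Bigr)=\det\bigl(Id-t(\varphi^{-1})^s_N\otimes id_\R\bigr),
\]
so the polynomials $p_N$ for $(X,\varphi)$ and $(Y,\psi)$ coincide for every $N$. Substituting these equalities into the product formula of Corollary \ref{cor-put6-1-2} yields the same alternating product for both zeta functions, giving $\zeta_{\varphi}(s)=\zeta_{\psi}(s)$ as an identity of functions.

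The point requiring the most care — and the only real obstacle — is the role of the \emph{grading}. Because the factors in Putnam's formula carry the sign $(-1)^{N+1}$, the zeta function records the alternating product of the $p_N$, not their ordinary product. A mere isomorphism of the ungraded total homology intertwining the automorphism would only force the total characteristic polynomial $\prod_N p_N$ to match and would not control the even/odd split; so it is essential to read the hypothesis as a degree-preserving isomorphism and to apply the trace formula degree by degree. The one remaining caveat is that Corollary \ref{cor-put6-1-2} is stated for irreducible Smale spaces, so this proposition is understood under that standing assumption (or, more precisely, is invoked for irreducible $(X,\varphi)$ and $(Y,\psi)$), which is exactly the regime in which the $\zeta$-function results of \cite[Section 6]{put} apply.
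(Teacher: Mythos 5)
Your proof is correct and follows essentially the same route as the paper's: both invoke Corollary \ref{cor-put6-1-2} and use the fact that the intertwining isomorphism, restricted to each degree $N$, conjugates $Id - t(\varphi^{-1})^s_N \otimes id_\R$ into $Id - t(\psi^{-1})^s_N \otimes id_\R$, so the polynomials $p_N$ and hence the alternating products coincide. Your added remarks on reading the hypothesis as a degree-preserving isomorphism and on the standing irreducibility assumption required by Corollary \ref{cor-put6-1-2} are sensible points of care that the paper leaves implicit.
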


Note that if $(X,\varphi) \sim_{Rat-H} (Y,\psi)$, then the hypothesis of Proposition \ref{h-equiv to zeta} is automatic.

\begin{proof}
Let $\Phi$ be the isomorphism which which intertwines the maps induced by $\varphi^{-1}$ and $\psi^{-1}$. For example, in the case $(X,\varphi) \sim_{Rat-H} (Y,\psi)$ one can take the map constructed in Proposition \ref{proRatHEquImpEquRatHom}. One then checks that, for each $N\in \Z$ and $t\in \R$, the following diagram commutes:
\[
\begin{CD}
H^s_N(X,\varphi)\otimes \R @> Id - t(\varphi^{-1})_N^s \otimes id_\R >> H^s_N(X,\varphi)\otimes \R \\
@V \Phi VV @VV \Phi V \\
H^s_N(Y,\psi)\otimes \R @> Id - t(\psi^{-1})_N^s \otimes id_\R >> H^s_N(Y,\psi)\otimes \R \\
\end{CD}
\]
Corollary \ref{cor-put6-1-2} then implies the result.
\end{proof}

We now have the following analogue of the diagram on \cite[p.261]{LM} for Smale spaces:

%\begin{figure}
\begin{center}
\begin{tikzpicture}
\node at (0,0) {$(X,\varphi) \sim_{strong} (Y,\psi)$};
\draw[implies-implies,double equal sign distance] (-2,0) -- node[above=0.2cm,pos=0.5]{\small{Thm \ref{StrInvIfAndOnlyIfConj}}} (-3.5,0);
\node at (-5,0) {$(X,\varphi) \cong (Y,\psi)$};
%\draw[->,>=implies,double equal sign distance] (0,-0.5) -- node[right=0.1cm,pos=0.4]{\small{Thm $7.2.7$}} (0,-1.5);
%\node at (0,-2) {$A \sim B$};
%\draw[implies-implies,double equal sign distance] (1,-2) -- node[above=0.2cm,pos=0.5]{\small{Thm $7.2.7$}} (3,-2);
\node at (-7,-4) {$(H^s(X,\varphi),(\varphi^{-1})^s)\simeq(H^s(Y,\psi),(\psi^{-1})^s)$};
\node at (-7,-4.5) {and};
\node at (-7,-5.1) {$(H^u(X,\varphi),\varphi^u)\simeq(H^u(Y,\psi),\psi^u)$};
\draw[->,>=implies,double equal sign distance] (-3.7,-3) -- node[above=0.2cm,pos=0.5]{\small{Defn \ref{weak_inv}}} (-6.2,-3.5);
\draw[->,>=implies,double equal sign distance] (-0.5,-0.5) -- node[left=0.2cm,pos=0.4]{\small{Prop \ref{relEquSmale}}} (-1.5,-2.5);
\node at (-2,-3) {$(X,\varphi) \sim_{H} (Y,\psi)$};
\draw[->,>=implies,double equal sign distance] (0.5,-0.5) -- node[right=0.2cm,pos=0.4]{\small{Prop \ref{relEquSmale}}} (1.5,-2.5);
\node at (2,-3) {$(X,\varphi) \sim_{Rat} (Y,\psi)$};
\draw[->,>=implies,double equal sign distance] (-1.5,-3.5) -- node[left=0.2cm,pos=0.4]{\small{Prop \ref{relEquSmale}}} (-0.5,-5.5);
\draw[->,>=implies,double equal sign distance] (1.5,-3.5) -- node[right=0.2cm,pos=0.4]{\small{Prop \ref{relEquSmale}}} (0.5,-5.5);
\node at (0,-6) {$(X,\varphi) \sim_{Rat-H} (Y,\psi)$};
\draw[->,>=implies,double equal sign distance] (-7,-5.5) --  (-7,-6.5);
\draw[->,>=implies,double equal sign distance] (-2.2,-6) -- node[above=0.2cm,pos=0.5]{\small{Prop \ref{proRatHEquImpEquRatHom}}} (-4.7,-6.5);
\node at (-7,-7) {$(H^s(X,\varphi)\otimes \Q,(\varphi^{-1})^s \otimes id)\simeq(H^s(Y,\psi)\otimes \Q,(\psi^{-1})^s \otimes id)$};
\node at (-7,-7.5) {and};
\node at (-7,-8.1) {$(H^u(X,\varphi)\otimes \Q,\varphi^u \otimes id)\simeq(H^u(Y,\psi)\otimes \Q,\psi^u \otimes id)$};
\draw[->,>=implies,double equal sign distance] (-7,-8.6) -- node[left=0.2cm,pos=0.5]{\small{Prop \ref{h-equiv to zeta}}} (-7,-9.5);
\node at (-7,-10) {$\zeta_{\varphi}(t) =\zeta_{\psi}(t)$};
\draw[->,>=implies,double equal sign distance] (-7,-10.5) -- node[above=0.2cm,pos=0.5]{} (-7,-11.5);
\node at (-7,-12) {$per(X,\varphi)=per(Y,\psi)$};
\draw[->,>=implies,double equal sign distance] (0.5,-6.5) -- node[right=0.2cm,pos=0.4]{\small{Def'n \ref{def:cor-equiv}}} (1.0,-7.5);
\node at (1,-8) {$(X,\varphi) \sim_{cor} (Y,\psi)$};
\draw[->,>=implies,double equal sign distance] (1,-8.5) -- node[right=0.2cm,pos=0.4]{\small{Def'n \ref{def:finite-equiv}}} (1.0,-9.5);
\node at (1,-10) {$(X,\varphi) \sim_{fin} (Y,\psi)$};
\draw[->,>=implies,double equal sign distance] (1,-10.5) -- node[right=0.2cm,pos=0.4]{\small{Prop \ref{finiteEqualEntropy}}} (1.0,-11.5);
\node at (1,-12) {$h(X,\varphi) = h(Y,\psi)$};
%\draw[->,>=implies,double equal sign distance] (-0.5,-6.5) -- node[left=0.2cm,pos=0.7]{\small{Prop \ref{thm:cor to dim}}} (-2.8,-10.5);
%\node at (-3,-11) {$\dim(X,\varphi) = dim(Y,\psi)$};
\end{tikzpicture}
\end{center}
%\caption{An analogue of the diagram on \cite[p.261]{LM}}
%\label{Analogue diagram}
%\end{figure}

\section{Outlook: Further functors}\label{sec:outlook}

It is likely that a correspondence will also induce maps between the K-theory groups of stable and unstable C*-algebras associated to Smale spaces and even, in a certain sense, at the level of the $C^*$-algebras themselves. However, there are a few issues to be addressed to do so: 
\begin{enumerate}
\item Currently, a C*-algebra can only be associated to a Smale space when it is non-wandering.
\item The functoriality of the C*-algebras under s/u-bijective maps is subtle, see \cite{Put3} (and again only known in the non-wandering setting).
\item There is no ``pullback lemma"  in the case of general Smale spaces or K-theory. Other examples of ``pullback lemmas" are \cite[Theorem 3.5.11]{put} in the case of the dimension group of shifts of finite type and \cite[Theorem 5.1]{DKWfunProPutHom} in the case of Putnam's homology theory.
\end{enumerate}
While we believe that these issues can be overcome, at least in the case of K-theory, a detailed development would greatly increase the length of the current paper. As such, we will pursue this topic in a future paper. We note that the work of Thomsen in \cite{Tho} is likely to factor into solutions to these issues.


\begin{thebibliography}{99}

\bibitem{AM}R. Adler and B. Marcus, {\em Topological Entropy and Equivalence of Dynamical Systems}, Memoirs of the A.M.S. {\bf 219}, 1979.

\bibitem{APG}M. Amini, I. F. Putnam, and S. S. Gholikandi, {\em Homology for one-dimensional solenoids}, arXiv:1307.0977.

\bibitem{Bla} B. Blackadar,  {\em K-Theory for Operator Algebras}, Math. Sci. Res. Inst. Publ., vol 5, Springer, New York, 1986; 2nd ed., Cambridge Univ. Press, Cambridge, 1988.

\bibitem{Bow} R. Bowen, {\em Markov partitions for axiom A diffeomorphisms},
Amer. J. Math {\bf 92} (1970), 725--747.

\bibitem{CS}A. Connes and G. Skandalis, {\em The longitudinal index theorem for foliations}, Publ. Res. Inst. Math. Sci., {\bf 20} no. 6 (1984) 1139-1183.

\bibitem{DKWfunProPutHom}R. J. Deeley, D. B. Killough, M. F. Whittaker, {\em Functorial properties of Putnam's homology theory for Smale spaces}, Ergodic Th. Dynam. Sys., published online (2015) DOI 10.1017/etds.2014.134, 1--30.
 
\bibitem{EM}H. Emerson and R. Meyer, {\em Bivariant K-theory via correspondences}. Adv. Math. {\bf 225} (2010) 2883--2919.

\bibitem{Fri} D. Fried, {\em Finitely presented dynamical systems} , Ergodic Th. Dynam. Sys. {\bf 7} (1987), 489--507.

\bibitem{HK} B. Hasselblatt and A. Katok, {\em A first course in dynamics}, Cambridge University Press, Cambridge, 2003.

\bibitem{HurWal} W. Hurewicz and H. Wallman, {\em Dimension theory}, Princeton Math. Ser. 4, Princeton, 1941.

%\bibitem{KPW} J. Kaminker, I.F. Putnam, and M.F. Whittaker {\em $K$-Theoretic Duality for Hyperbolic Dynamical Systems}, J. Reine Angew. Math., published online (2015) DOI 10.1515/crelle-2014-0126, 1--37.

%\bibitem{KP} B. Killough and I.F. Putnam, \emph{Bowen measure from heteroclinic points}, Can. J. Math. {\bf 64} (2012), 1341--1358.

\bibitem{KRred}K. H. Kim and F. W. Roush, {\em Williams' conjecture is false for reducible subshifts}, J. A.M.S. {\bf 5} (1992), 213--215.

\bibitem{KRirr}K. H. Kim and F. W. Roush, {\em The Williams conjecture is false for irreducible subshifts}, Annals of Math {\bf 149} (1999), 545--558.

\bibitem{Kri} W. Krieger, {\em On dimension functions and topological Markov chains}, Invent. Math. {\bf 56} (1980), 239--250.

\bibitem{LM} D. Lind and B. Marcus, {\em An Introduction to Symbolic Dynamics and Coding}, Revised Printing, Cambridge Univ. Press, Cambridge, 1999.

\bibitem{Par} W. Parry, {\em A finitely classification of topological Markov chains and sofic systems}, Bull. London Math. Soc. {\bf 9} (1977), 86--92.

\bibitem{Put1} I.F. Putnam, {\em $C^\ast$-Algebras from Smale Spaces}, Can. J. Math. {\bf 48} (1996), 175--195.

\bibitem{Put3} I.F. Putnam, {\em Functoriality of the $C^\ast$-Algebras Associated with Hyperbolic Dynamical Systems}, J. London Math. Soc. {\bf 62} (2000), 873--884.

\bibitem{PutLift}I.F. Putnam, {\em Lifting factor maps to resolving maps}, Israel J. Math. {\bf 146} (2005), 253--280.

\bibitem{put}I.F. Putnam, {\it A Homology Theory for Smale Spaces}, Memoirs of the A.M.S. {\bf 232}, Providence, 2014.

\bibitem{Rue1} D. Ruelle, {\em Thermodynamic Formalism}, Second Ed., Cambridge Univ. Press, Cambridge, 2004.

\bibitem{Sma} S. Smale, {\em Differentiable Dynamical Systems}, Bull. A.M.S. {\bf 73} (1967), 747--817.

\bibitem{Tho} K. Thomsen {\em C*-algebras of homoclinic and heteroclinic structure in expansive dynamics}, Memoirs of the A.M.S. {\bf 970}, Providence, 2010.

\bibitem{WiePhD} S. Wieler, {\em Smale spaces with totally disconnected local stable set}, PhD thesis University of Victoria, 2012.

\bibitem{WiePaper} S. Wieler, {\em Smale spaces via inverse limits},  Ergodic Th. Dynam. Sys. {\bf 34} (2014), 2066--2092.

\bibitem{Wil} R.F. Williams, {\em Classification of sub shifts of finite type}, Annals of Math.  {\bf 99}  (1974), 120--153; erratum Annals of Math. {\bf 99} (1974), 380--381.

\end{thebibliography}
\end{document}